\setlist[itemize]{leftmargin=*} %sets itemize indentation to 0
\setlist[enumerate]{leftmargin=*}
\theoremstyle{plain}
\newtheorem{theorem}{Theorem}[section]
\newtheorem{claim}[theorem]{Claim}
\newtheorem{lemma}[theorem]{Lemma}
\newtheorem{conjecture}[theorem]{Conjecture}
\newtheorem{observation}[theorem]{Observation}
\theoremstyle{definition}
\newtheorem{defn}[theorem]{Definition}
\newtheorem*{defn*}{Definition}
\def\expandafter\normalsize\expandafter{%
    \normalsize
    \setlength\abovedisplayskip{4pt}
    \setlength\belowdisplayskip{4pt}
    \setlength\abovedisplayshortskip{4pt}
    \setlength\belowdisplayshortskip{4pt}
}
\newcommand{\calG}{\mathcal{G}}
\newcommand{\calI}{\mathcal{I}}
\newcommand{\calP}{\mathcal{P}}
\def\eps {\varepsilon}
\newcommand{\Bin}{\mathrm{Bin}}
\newcommand{\Gr}{G_\mathrm{red}}
\newcommand{\indsizeramsey}{\hat{R}_{ind}^k}
\newcommand{\cI}{\mathcal{I}}
\newcommand\blfootnote[1]{%
  \begingroup
  \renewcommand\thefootnote{}\footnote{#1}%
  \addtocounter{footnote}{-1}%
  \endgroup
}
\renewcommand{\Pr}{\mathbb{P}}
\title{\vspace{-0.8cm}Effective bounds for induced size-Ramsey numbers of cycles}
\author{
Domagoj Brada\v{c}
\and
Nemanja Dragani\'{c}
\and 
Benny Sudakov
}
\date{}
\begin{document}

\maketitle
\begin{abstract}
The induced size-Ramsey number \blfootnote{Department of Mathematics, ETH, Z\"urich, Switzerland. Research supported in part by SNSF grant 200021\_196965.\\ Email: \textbf{\{domagoj.bradac, benjamin.sudakov, nemanja.draganic\}@math.ethz.ch}.}$\hat{r}_\text{ind}^k(H)$ of a graph $H$ is the smallest number of edges a (host) graph $G$ can have such that for any $k$-coloring of its edges, there exists a monochromatic copy of $H$ which is an induced subgraph of $G$. 
In 1995, in their seminal paper, Haxell, Kohayakawa and {\L}uczak showed that for cycles, these numbers are linear for any constant number of colours, i.e.,  $\hat{r}_\text{ind}^k(C_n)\leq Cn$ for some $C=C(k)$. The constant $C$ comes from the use of the regularity lemma, and has a tower type dependence on $k$. In this paper we significantly improve these bounds, showing that $\hat{r}_\text{ind}^k(C_n)\leq O(k^{102})n$ when $n$ is even, thus obtaining only a polynomial dependence of $C$ on $k$. We also prove $\hat{r}_\text{ind}^k(C_n)\leq e^{O(k\log k)}n$ for odd $n$, which almost matches the lower bound of $e^{\Omega(k)}n$. Finally, we show that the ordinary (non-induced) size-Ramsey number satisfies $\hat{r}^k(C_n)=e^{O(k)}n$ for odd $n$. This substantially improves the best previous result of $e^{O(k^2)}n$, and is best possible, up to the implied constant in the exponent. To achieve our results, we present a new host graph construction which, roughly speaking, reduces our task to finding a cycle of approximate given length in a graph with local sparsity.
\end{abstract}

\section{Introduction}

The Ramsey number $r^k(H)$ of a graph is the smallest integer $n$ such that every $k$-coloring of the edges of $K_n$ contains a monochromatic copy of $H$. The notion of Ramsey numbers is one of the most central notions in combinatorics and it has been studied extensively since Ramsey \cite{Ramsey1930} showed their existence for every graph $H$.
Motivated by this definition, we say that a graph $G$ is $k$-Ramsey for a graph $H$ if any $k$-coloring of the edges of (the \emph{host graph}) $G$, contains a monochromatic copy of $H$, and we write $G\xrightarrow{k}H$. Using this notation, we have that $r^k(H)=\min\{|V(G)|: G\xrightarrow{k}H\}.$

The notion of Ramsey numbers is measuring the minimality of the host graph in terms of the number of vertices. Are there graphs $G$ with significantly less edges than the clique on $r^k(H)$ vertices that are $k$-Ramsey for $H$? This general question is captured by the notion of size-Ramsey numbers introduced in 1978 by Erd\H{o}s, Faudree, Rousseau and Schelp~\cite{erdHos1978size}. The size-Ramsey number of a graph $H$ is defined as $\hat{r}^k=\min\{E(G)| G\xrightarrow{k} H\}$. In the last few decades, there has been extensive research on this notion, see, e.g., \cite{CFS}.

One of the main goals is to understand which classes of graphs have size-Ramsey numbers which are linear in their number of edges. Beck \cite{beck1983size} showed that this is true for paths, which was later extended to all bounded-degree trees by Friedman and Pippenger \cite{friedman1987expanding}. It is also known that logarithmic subdivisions of bounded degree graphs have linear size-Ramsey numbers \cite{draganic2022rolling}, as well as bounded degree graphs with bounded treewidth \cite{kamcev2021size, Berger21}. Given all of the mentioned results, it might be tempting to assume that all graphs of bounded degree have linear size-Ramsey numbers. In an elegant paper of R\"{o}dl and Szemer\'{e}di \cite{rodl2000size}, it was shown that this is not the case. Indeed, they showed that there exist $n$-vertex cubic graphs which have size-Ramsey numbers at least $n\log^c n$, for a small constant $c>0$. This bound has only very recently been improved to $c n e^{c \sqrt{\log n}}$ for some $c > 0$ by Tikhomirov \cite{tikhomirov2022bounded}. For more results see \cite{draganic2022size} and references therein.

A related studied notion is that of induced size-Ramsey numbers. Given a graph $H$, the induced size-Ramsey number $\hat{r}^k_{ind}(H)$ is the smallest number of edges a graph $G$ can have such that any $k$-coloring of $G$ contains a monochromatic copy of $H$ which is an induced subgraph of $G$. The existence of these numbers is an important generalisation of Ramsey's theorem, proved independently by Deuber \cite{Deuber},  Erd\H{o}s, Hajnal, and Pósa \cite{ErHaPo}, and Rödl \cite{Rodl73}.
Naturally, this concept is much harder to understand for most classes of target graphs $H$ and much less precise bounds are known than for the (non-induced) size-Ramsey number.

Indeed, already for bounded degree trees we know that the size-Ramsey number is linear in their number of vertices, whereas for its induced counterpart we have no good bounds while we have every reason to believe that the answer should also be linear. Further, the best general upper bound on $\hat{r}^2_{ind}(H)$ for $n$-vertex graphs $H$ is obtained by Conlon, Fox and Sudakov \cite{kohayakawa1998induced}, and is of the order $2^{O(n\log n)}$, while Erd\H{o}s \cite{erdos1975problems} conjectured that $\hat{r}^2_{ind}(H)\leq 2^{cn}$. In comparison, the bound for Ramsey numbers (and hence also for size-Ramsey numbers) is known to be exponential in the number of vertices of the target graph. Further, it is known that the size-Ramsey number of $n$-vertex graphs with degree bounded by a constant $d$, is between $n e^{\Omega(\sqrt{\log n})}$ and $O(n^{2-\frac{1}{d}+\varepsilon})$, proven by Tikhomirov \cite{tikhomirov2022bounded}, and by  Kohayakawa, Rödl, Schacht, and Szemerédi \cite{kohayakawa2011sparse}, respectively. On the other hand, the best upper bound on the induced size-Ramsey number of these graphs, proved by Fox and Sudakov \cite{fox2008induced} is of the order $n^{O(d\log d)}$, while the best lower bound is still the bound for the (non-induced) Ramsey number of those graphs, which is often the state of the art for such questions.

For paths it is known that $\Omega(k^2)n\leq \hat{r}^k(P_n)\leq O(k^2\log k)n$ (see \cite{dudek2017some, krivelevich2019expanders} for the lower bound and \cite{krivelevich2019long,dudek2015alternative} for the upper bound). In the induced case, by a recent result of Dragani\'c, Krivelevich and Glock \cite{draganic2022short}, we have that $\hat{r}^k_{ind}(P_n)\leq O(k^3\log^4k)n$. For cycles, the discrepancy between the size-Ramsey and the induced size-Ramsey number is significantly larger.
Indeed, by a recent result of Javadi and Miralaei \cite{javadi2023multicolor}, which improved another recent result by Javadi, Khoeini, Omidi and Pokrovskiy \cite{javadi2019size}, we have $r^k(C_n)=O(k^{120}\log^2k)n$ for even $n$, and $r^k(C_n)=O(2^{16k^2+2\log k})n$ for odd $n$. On the other hand, the only 
known upper bound on the induced size-Ramsey numbers of cycles was obtained in the seminal paper of Haxell, Kohayakawa and {\L}uczak \cite{haxell1995induced}. Their proof uses a technically very involved argument relying on the use of the Sparse Regularity lemma and therefore 
shows that $r_{ind}^k(C_n) \le Cn$ where $C=C(k)$ has a tower type dependence on $k$.

In this paper, we prove the following theorem which quite significantly improves the tower-type bounds of Haxell, Kohayakawa and {\L}uczak.
\begin{theorem}\label{thm:main}
        For any integer $k \ge 1,$ there exists $n_0(k)$ such that for all $n \ge n_0(k),$ the following holds.
        \begin{enumerate}[label=\alph*)]
        \item \label{induced-even} If $n$ is even, then $\hat{r}_{ind}^k(C_n)=O(k^{102})n.$
        \item \label{induced-odd} If $n$ is odd, then $\hat{r}_{ind}^k(C_n)=e^{O(k\log k)}n$.
    \end{enumerate}
\end{theorem}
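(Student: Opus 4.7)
The overall strategy is to construct, for each $k$, an explicit host graph $G$ with the claimed number of edges so that the induced size-Ramsey problem reduces, after peeling off a large monochromatic substructure, to the cleaner task of finding a cycle of \emph{approximately} the right length inside a monochromatic subgraph with \emph{local sparsity}. Concretely, I would build $G$ by taking a carefully chosen small \emph{template} graph $\Gamma$ with strong Ramsey-type properties, blowing up each edge of $\Gamma$ into a path of length $\ell \approx n/|E(\Gamma)|$, and decorating vertices with short \emph{brushes} to be used later as length-adjusters. This gives $|E(G)| = \Theta(|E(\Gamma)|\cdot \ell)+O(n)$, and suitable choices of $|E(\Gamma)|$ -- polynomial in $k$ for the even case, exponential for the odd case -- yield the promised bounds.

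For even $n$, I would take the template $\Gamma$ bipartite with $\poly(k)$ degree, e.g.\ a bipartite pseudorandom graph, and arrange things so that any $k$-coloring of $E(\Gamma)$ contains a monochromatic subgraph of good expansion; classical arguments show this is achievable with $\poly(k)$ degree. For odd $n$, $\Gamma$ must additionally guarantee a monochromatic expanding structure of \emph{odd} girth, which essentially amounts to a multicolour Ramsey-type property for odd cycles. The currently available exponential bounds for this problem (and in particular the linear upper bound $\hat{r}^k(C_n)=e^{O(k)}n$ for odd $n$ proved in the same paper) push $|V(\Gamma)|$ up to $e^{O(k\log k)}$, matching the known lower bound $e^{\Omega(k)}n$ up to a logarithmic factor in the exponent.

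Given any $k$-coloring of $E(G)$, I assign to each edge of $\Gamma$ the majority color along its blow-up path and apply the template's Ramsey property to obtain a monochromatic expanding subgraph $\Gamma'\subseteq \Gamma$ (of the correct parity in the odd case). Lifting $\Gamma'$ back into $G$ yields a monochromatic subgraph $H$ in which each edge of $\Gamma'$ corresponds to a path carrying a positive fraction of edges of the target color. Because $\Gamma$ is itself locally sparse and the path blow-up is vertex-disjoint, $H$ inherits \emph{local} sparsity in $G$: any vertex of $H$ has only a bounded number of neighbors outside its immediate path, so every short cycle inside $H$ is automatically induced in $G$. The problem is thus reduced to the advertised task of finding a cycle of approximate length $n$ inside the locally sparse monochromatic $H$, which can be handled via a standard expansion-plus-DFS argument, or a Friedman--Pippenger style embedding.

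The principal technical obstacle is turning the approximate-length cycle into one of \emph{exactly} $n$ vertices without destroying induced-ness. My plan is to exploit the brushes attached at template vertices as fine-grained length adjusters: at each template vertex several short detours of varying lengths are available, and choosing them appropriately lets one hit every target length in a window around $n$. The delicate step, and presumably the main innovation of the paper, is to design these brushes so that they never create chords between remote parts of the cycle; this is guaranteed by the local-sparsity property baked into the host construction. Once this is in place, the two parts of Theorem~\ref{thm:main} follow from a common argument, differing only in the template's Ramsey specification and hence in the resulting dependence on $k$.
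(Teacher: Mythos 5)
Your proposal differs substantially from the paper's construction, and there is a genuine gap in the central reduction. You propose to blow up each edge of a small template $\Gamma$ into a path of length $\ell$, assign to each template edge the \emph{majority} colour of its blow-up path, and then invoke a Ramsey property of $\Gamma$ to extract a monochromatic expanding subgraph $\Gamma'$. The problem is that knowing a colour occupies a majority (or merely a positive fraction) of the $\ell$ edges of a path does not give you a monochromatic sub-path that traverses the whole blown-up edge; an adversarial colouring can ensure the longest monochromatic run inside each blown-up path has length $O(1)$, so ``lifting $\Gamma'$ back into $G$'' does not yield a monochromatic cycle at all. This step cannot be repaired by a simple majority vote: one would need each blown-up edge to itself be size-Ramsey for paths of length $\ell$, which changes the edge budget and does not obviously preserve inducedness. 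There is also a quieter inconsistency: with $\ell \approx n/|E(\Gamma)|$ your host graph has only $\Theta(n)$ edges, independent of $k$, which would contradict the known $\Omega(k^2)n$ lower bound, so the arithmetic of the parameters needs to depend on the intended cycle length inside $\Gamma'$ rather than on $|E(\Gamma)|$.

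The paper takes a different route that avoids the majority-colour issue entirely. Rather than a deterministic template with long path blow-ups, it builds a random linear $s$-uniform hypergraph $H$ on $N=\Theta(n)$ vertices (cleaned to have high girth), places a small gadget graph $F$ inside each hyperedge, where $F$ is chosen so that \emph{every} $k$-colouring of $F$ contains a monochromatic induced $C_6$ (even case, via dense bipartite $C_4$-free graphs and Bondy--Simonovits, giving $s=O(k^6)$) or a monochromatic induced $C_5$ (odd case, via Alon's triangle-free pseudorandom graph, giving $s=e^{O(k\log k)}$). This yields an auxiliary graph $G$ with one genuinely monochromatic ``edge'' per gadget, so a monochromatic cycle of length roughly $2n/5$ or $n/3$ in $G$ lifts, by replacing each auxiliary edge with a length-$2$ or length-$3$ arc of the corresponding $C_5$/$C_6$, to a monochromatic cycle of \emph{exactly} length $n$ in $\Gamma$; the length adjustment is built into the gadget, not delegated to ``brushes''. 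Inducedness is controlled not by local sparsity of a blow-up but by the high girth of $H$ together with local sparsity of its intersection graph $\calI(H)$, via the notion of a ``good'' cycle. Finding the good monochromatic cycle in $G$ then proceeds by a DFS-style argument and expansion in the densest colour class, which is where Krivelevich's expander-extraction result enters. So while you correctly anticipated the headline reduction -- find a monochromatic cycle of \emph{approximate} length under local sparsity, and fix the length afterwards -- the mechanism in the paper is essentially orthogonal to a majority-coloured path blow-up.
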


While the focus of this paper is on induced size-Ramsey numbers of cycles, our method can be also used to substantially improve the upper bound for the non-induced case as well. Our next result gives an essentially tight estimate for the size-Ramsey numbers of odd cycles.
\begin{theorem}\label{thm:non-induced}
        For any integer $k \ge 1,$ there exists $n_0(k)$ such that for all $n \ge n_0(k),$ we have $\hat{r}^k(C_n)=e^{O(k)}n$.
\end{theorem}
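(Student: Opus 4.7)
The plan is to follow the host-graph philosophy laid out in the abstract: build a sparse host $G$ with $e^{O(k)}n$ edges such that in any $k$-coloring, the majority color class inherits enough pseudo-randomness (expansion together with local sparsity) to apply the paper's new cycle-finding lemma and extract a monochromatic odd cycle of length exactly $n$. The target bound $e^{O(k)}n$ matches, up to the constant in the exponent, the $e^{\Omega(k)}n$ lower bound coming from the Ramsey number of odd cycles, so the result is essentially tight and all the slack must be absorbed in this exponent.

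For the host graph, I would take $N = e^{O(k)}n$ vertices and an essentially random bounded-degree structure, for instance $G(N,p)$ with $p = c/n$ for a suitably large constant $c$, giving $e^{O(k)}n$ edges with high probability. Standard concentration yields the useful properties: every vertex subset $S$ of size at most $\eps n$ satisfies $|N_G(S)| \geq C|S|$ for a large $C$, and any subgraph using a constant fraction of the edges is locally sparse. I would additionally tune the construction (for instance by attaching a random matching or by blowing up a small non-bipartite Ramsey gadget on $e^{O(k)}$ vertices) to guarantee that $G$ contains many short odd cycles spread throughout, to serve later as parity-correction gadgets.

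By pigeonhole, any $k$-edge-coloring of $G$ produces a monochromatic subgraph $H \subseteq G$ with at least $|E(G)|/k = e^{O(k)}n$ edges. Iteratively deleting low-degree vertices, justified by the expansion of $G$, leaves a subgraph $H' \subseteq H$ on linearly many vertices with average degree $\Theta(c/k)$ and robust expansion. Applying the paper's cycle-finding lemma to $H'$ yields cycles of essentially every length in a long interval around $n$; a final parity correction step, splicing a long even path in $H'$ with a short odd cycle gadget of controlled length, produces a cycle of length exactly $n$.

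The main obstacle will be the parity correction: while expansion alone easily yields cycles of many lengths, forcing odd parity at length exactly $n$ requires simultaneously controlling the length of the long path, the length of the attached odd cycle, and the lengths of the connecting paths, all within the single color class $H'$. This is where the local sparsity of $G$ plays the central role --- it prevents short-circuiting paths and lets the cycle-finding lemma hit each length in the target interval rather than only a subset, making the overall argument survive with only $e^{O(k)}n$ edges in the host.
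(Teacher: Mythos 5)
Your proposal diverges from the paper's proof in ways that leave real gaps, and the central construction is missing.

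\textbf{Gap 1: the host graph does not force monochromatic odd cycles.} You propose $G(N, c/n)$ with $c$ a ``suitably large constant'' as the host. But a $k$-edge-coloring of any graph $G$ with chromatic number at most $2^k$ can be chosen so that \emph{every} color class is bipartite: fix a proper vertex coloring with $\leq 2^k$ colors, encode each vertex by a string in $\{0,1\}^k$, and color each edge by the first coordinate where its endpoints' strings differ. So you must tune the density so $\chi(G) > 2^k$, and even then you only learn that \emph{some} color class is non-bipartite; you separately need the class driving your expansion argument to be non-bipartite, and the proposal never reconciles these two requirements. The paper sidesteps this entirely by making the host graph a union of copies of $K_{2^k+1}$ (the gadget $F_3$), one placed on each hyperedge of a random linear $s$-uniform hypergraph; Lemma~\ref{lem:any-odd-cycle} then guarantees a monochromatic odd cycle \emph{inside every gadget copy}, regardless of the coloring.

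\textbf{Gap 2: the parity/length correction has no mechanism.} You write that expansion yields cycles of ``essentially every length in a long interval around $n$'' and that an odd-cycle gadget can be spliced in to hit length $n$. This is the crux, and it is not a consequence of expansion: expansion gives cycles of many lengths, not an interval, and it certainly does not control parity within a single color class. The paper's mechanism is different and is the actual content of Theorem~\ref{thm:non-induced}: it selects the most frequent length $L$ of the gadget-internal monochromatic odd cycles, builds an auxiliary graph $G_3$ whose edges join vertices at distance $(L-1)/2$ on those cycles, finds a \emph{good} monochromatic cycle of length $\ell \in [\tfrac{2n}{L+1},\tfrac{2n}{L-1}]$ in $G_3$ (via Lemma~\ref{lem:hypergraph}, the DFS-based good-path lemma, tree-growing, and Theorem~\ref{t-sparse}), and then replaces each auxiliary edge with a path of length $(L-1)/2$ or $(L+1)/2$ inside its gadget (Lemma~\ref{lem:cycle in host graph}\ref{lem:cycle-in-host-non-induced}), which realizes every length in $[\tfrac{L-1}{2}\ell, \tfrac{L+1}{2}\ell]$, hence $n$. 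The restriction to the most common $L$ is what costs the extra $2^{O(k)}$ factor, giving $e^{O(k)}n$ overall.

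\textbf{Gap 3: circular reference.} You invoke ``the paper's cycle-finding lemma'' as an external tool applied to your $H'$. That lemma is proved in the paper as part of this very theorem and is stated for good cycles in the auxiliary graph built from the hypergraph intersection structure; it does not give arbitrary-length monochromatic cycles in a generic expander, which is what your sketch requires. Without the hypergraph/gadget/auxiliary-graph scaffolding (Berge girth, the intersection graph $\cI$, sunflower cycles, good paths/cycles), the notion of ``good'' that powers the inductive cycle-finding argument is not even available.
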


The best known lower bound for size-Ramsey numbers of even cycles comes from the bound for paths, which is of the order $\Omega(k^2)n$ \cite{dudek2017some, krivelevich2019long}. In the odd case, there is a simple construction of a coloring which gives a lower bound of $2^{k-1}n$ (see \cite{javadi2023multicolor}), showing that the second result in Theorem~\ref{thm:main} is tight up to an $O(\log k)$ factor in the exponent, while the bound in Theorem~\ref{thm:non-induced} is tight up to a constant factor in the exponent.

We remark that, as in \cite{haxell1995induced}, our proofs can easily be adapted to provide monochromatic induced cycles of all (even) lengths between $C \log n$ and $n$ for some constant $C$ depending only on $k$.
We also note that our bound on the size-Ramsey number of even cycles $\hat r^k(C_n)\leq \hat r^k_{ind} (C_n)=O(k^{102})n$ can be further improved significantly, using the same methods, but we chose not to present that here.

We systematically ignore floor and ceiling signs whenever they are not crucial for the argument. All logarithms are base $e$ unless otherwise specified. We make no serious attempt to optimize the constants in our proofs. We use standard graph theoretic notation throughout. We denote by $\delta(G)$ and $\Delta(G)$ the minimum and maximum degree of a graph $G$ and we use $V(G)$ and $E(G)$ to denote its vertex and edge set respectively. We also denote $v(G) = |V(G)|$ and $e(G)=|E(G)|.$

\section{Proof outline}\label{sec:outline}
The main idea behind our proof is the following: consider a binomial random graph $G(N, C/N),$ where $N = C'n$ and $C, C'$ are appropriately chosen large constants. Let $G$ be adversarially $k$-edge-colored. Then, it is easier to find an induced monochromatic cycle of length in $[0.9n, 1.1n],$ say, then of length precisely $n$. Our host graph is constructed to take advantage of this.

In the rest of the outline we focus on the proof of the induced odd case (Theorem~\ref{thm:main}~\ref{induced-odd}) and at the end we outline the changes needed for the other two statements.

Given $k$, we find a fixed ``gadget'' graph $F = F(k)$ which is $k$-induced-Ramsey for a $5$-cycle. We denote $s = v(F).$
%say what they are?
We construct an $s$-uniform $N$-vertex hypergraph $H$ by taking $CN$ random hyperedges. We clean $H$ so it does not have any short Berge cycles (see Definition~\ref{def:berge}) so, in particular, it is linear. Then we construct our host graph $\Gamma$ by placing an isomorphic copy of $F$ inside every hyperedge of $H.$ By definition, inside every copy of $F$, there is a monochromatic induced copy of $C_5.$ The main object we work with will be an auxiliary $k$-edge-coloured graph $G$ on the same vertex set as $\Gamma$. For each placed copy of $F$ in $\Gamma$, in $G$ we put an edge between a single pair of vertices which are at distance $2$ in one of the induced monochromatic copies of $C_5$ in the  copy of $F$, and colour this edge with the colour of that cycle. 

Now, suppose we find a monochromatic, say red, cycle $Q$ of length $\ell \in [n/3, n/2]$ in $G.$ By definition, each edge of $Q$ corresponds to an induced $5$-cycle in $\Gamma,$ where the endpoints of the edge are at distance $2$ in the cycle. For each of these $5$-cycles, we can choose either a path of length $2$ or a path of length $3$ in $G$ to obtain a red cycle $Q'$ of length exactly $n$ in $\Gamma$ (see Figure~\ref{fig:transforming cycle}). The main technical difficulty is in obtaining certain properties of $Q$ such that the resulting cycle $Q'$ is induced in $\Gamma.$ 

More precisely, the following will be sufficient. Recall that every edge $e \in E(G)$ comes from a hyperedge in $H$ which we denote by $h(e).$ Suppose $Q$ is a cycle in $G$ with edges $e_1, \dots, e_\ell$ such that no hyperedge apart from $h(e_1), \dots, h(e_\ell)$ in $H$ intersects $\bigcup_{i \in [\ell]} h(e_i)$ in more than one vertex. Further, suppose that each $h(e_i)$ only intersects $h(e_{i-1})$ and $h(e_{i+1})$ among the mentioned hyperedges. Then, it is not difficult to see that the cycle $Q'$ obtained as above is induced in $\Gamma.$ We will call such a cycle $Q$ \emph{good}.

Let us now explain how to find an induced monochromatic cycle of length between $n/2$ and $n/3$ in a $k$-edge-colored graph $G \sim \calG(N, C/N)$ with $N = C'n$ for some large constants $C, C'.$ Our real task is more involved as we require a stronger condition on the found cycle as discussed above, since we are not working with a binomial random graph. However, most of the ideas can be described through the lens of this simpler problem.

%add min degree cleaning and or expansion, should we do it precisely?
We now sketch how to find a monochromatic induced cycle of length between $n/2$ and $n/3$ in $G \sim \calG(N, C/N).$ The proof strategy is illustrated in Figure~\ref{fig:proof-sketch}. By standard results, it is not difficult to clean $G$ without losing many edges, so that it has no cycles of length $O(1)$. Further, we also know that it is locally sparse, that is, all sets $U$ of size $|U| \le \eps N$ span at most $\frac{3}{2} |U|$ edges, where $\eps > 0$ is some constant depending on $C.$ We consider the subgraph corresponding to the densest colour class, say red and using a result of Krivelevich~\cite{krivelevich2019expanders}, we find inside it a large expanding subgraph $G'$. Dragani\'{c}, Glock and Krivelevich~\cite{draganic2022short} showed using a modified DFS algorithm that under the given assumptions, $G'$ has a red induced path of length $2n/5$ and we adapt their argument to our setting. Given such a red induced path of length $2n/5,$ from the endpoints we construct two trees $T_1, T_2$ each of depth $O(\log N)$ and with $\Omega(\eps N)$ leaves. Moreover, we do it in such a way that any path containing the initial endpoints is good, i.e. if there is a red edge connecting two vertices in different trees, it closes a good cycle in $G'$. Let $W = V(P) \cup V(T_1) \cup V(T_2)$ and remove from it a large constant number of the last layers in $T_1$ and $T_2$, so that the resulting $W$ is small enough compared to the leaf sets of $T_1$ and $T_2$. Denote by $R_1$ and $R_2$ the vertices in the deleted layers in $T_1$ and $T_2$, respectively. Finally, using the expanding properties of $G',$ we may expand from the sets $R_1$ and $R_2$, while avoiding vertices which are incident to $W$ until the two balls around $R_1$ and $R_2$ of large enough constant diameter intersect, and thus we close a cycle of desired length. Using the girth assumption on our graph it is not difficult to show that this cycle is induced.

Let us now comment on the differences in the proofs for the three different statements. In the odd induced case, we can take $F$ to be Alon's celebrated construction of a dense pseudorandom triangle-free graph on $e^{\Theta(k \log k)}$ vertices. We will prove that, every $k$-edge-colouring of that graph will contain an induced monochromatic $C_5.$ However, when $n$ is even, we can instead take $F$ to be $k$-induced-Ramsey for a $6$-cycle with only $O(k^6)$ vertices by taking a sufficiently dense bipartite $C_4$-free graph. Again, in each copy of $F,$ we find a monochromatic induced $6$-cycle and connect two vertices at distance $2$ on the cycle to form our auxiliary graph. The same argument as above shows that given a monochromatic cycle of length $\ell$ in the auxiliary graph $G,$ we can find a monochromatic cycle of any even length between $2\ell$ and $4\ell$ in $\Gamma.$ Finally, for the odd non-induced case, we can take $F$ to be the complete graph on $2^k + 1$ vertices. It is easy to see that any $k$-edge coloring of that graph has a monochromatic odd cycle. For simplicity, we take the most common length $L$ among those cycles, and for each of these $L$-cycles, we connect two vertices at distance $(L-1)/2$ on the cycle to form the auxiliary graph. Then, a monochromatic good cycle of length between $2n / (L-1)$ and $2n / (L+1)$ in the auxiliary graph yields a monochromatic cycle of length $n$ in our host graph. This required extra precision in the length of the good cycle in the auxiliary graph will only cost us a factor of $2^{O(k)}$ in the number of copies of $F$ we use in our construction.

The rest of the paper is structured as follows. In Section 3, we provide constructions of the small gadget graphs. In Section 4, we prove the properties of the random hypergraph $H$ mentioned above and present the construction of our host graph. In Section 5, we show how to find the desired induced cycle in the host graph with the proof split into four subsections corresponding to different stages of the proof. Finally, we end with some concluding remarks in Section 6.

\begin{figure}[h]
    \caption{Transforming an $8$-cycle in the auxiliary graph (thick red edges) into a $21$-cycle in the original graph by using $5$ paths of length $3$ and $3$ paths of length $2$.}\label{fig:transforming cycle}
    \includegraphics[scale=0.4]{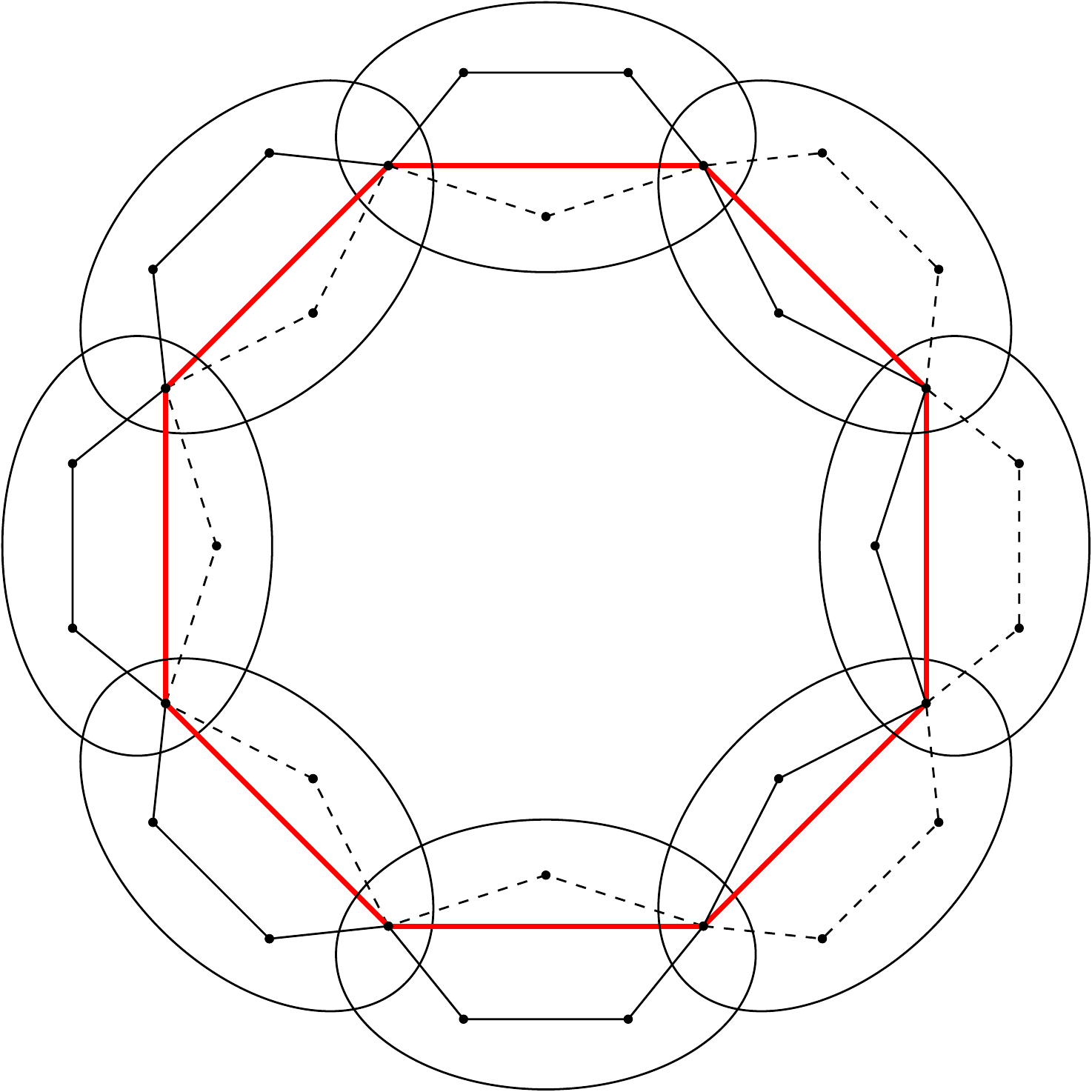}
\end{figure}

\begin{figure}[h]
    \caption{Building an induced cycle} \label{fig:proof-sketch}
    \includegraphics[scale=0.7]{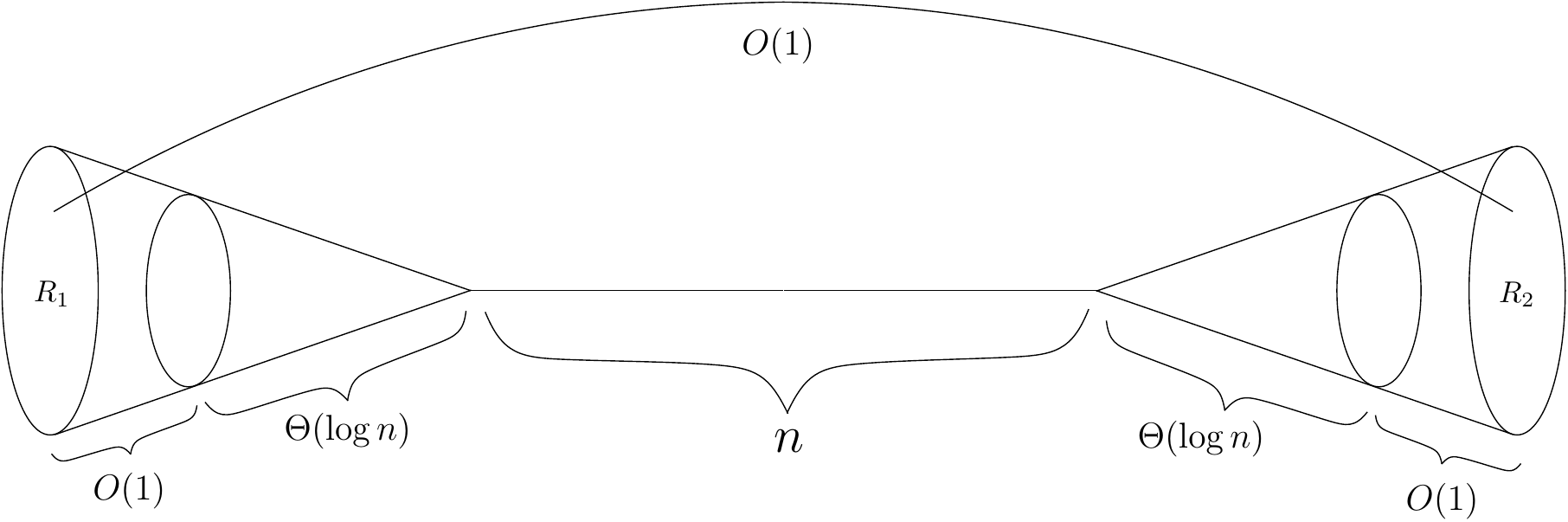}
\end{figure}

\section{The multicolour induced size-Ramsey numbers of short cycles}
In this section we give upper bounds for the multicolour induced size-Ramsey number of short cycles. The graphs providing these upper bounds will be used as the building blocks of our host graph. For the proof of Theorem~\ref{thm:non-induced}, we simply take the complete graph on $2^k + 1$ vertices, the reason for this choice being the following simple lemma.

\begin{lemma} \label{lem:any-odd-cycle}
    Any $k$-edge-colouring of the complete graph on $2^k + 1$ vertices contains a monochromatic odd cycle.
\end{lemma}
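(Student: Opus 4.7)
The plan is to use the classical characterization that a graph is bipartite if and only if it contains no odd cycle, combined with a pigeonhole argument on binary labels.

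Suppose for contradiction that some $k$-edge-colouring of $K_{2^k+1}$ contains no monochromatic odd cycle. For each colour $i \in [k]$, let $G_i$ denote the spanning subgraph of $K_{2^k+1}$ consisting of the edges of colour $i$. By assumption, $G_i$ has no odd cycle, so it is bipartite. Hence $G_i$ admits a proper $2$-colouring $f_i \colon V(K_{2^k+1}) \to \{0,1\}$, where we may assign arbitrary values to isolated vertices of $G_i$ while preserving properness on each connected component.

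Now define the map $F \colon V(K_{2^k+1}) \to \{0,1\}^k$ by
\[
F(v) = \bigl(f_1(v), f_2(v), \dots, f_k(v)\bigr).
\]
Since $|V(K_{2^k+1})| = 2^k + 1 > 2^k = |\{0,1\}^k|$, the pigeonhole principle yields two distinct vertices $u, v$ with $F(u) = F(v)$. The edge $uv$ exists in $K_{2^k+1}$ and carries some colour $i \in [k]$, so $uv \in E(G_i)$. But then $f_i(u) = f_i(v)$, contradicting the fact that $f_i$ is a proper $2$-colouring of $G_i$.

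The only mild subtlety is ensuring that $f_i$ extends to all vertices of $V(K_{2^k+1})$ rather than merely to the vertex set of $G_i$; this is handled by choosing any values on vertices not incident to edges of colour $i$, so it is not really an obstacle. The argument is essentially one line once the right framing is in place, and the bound $2^k+1$ is tight because a suitable colouring of $K_{2^k}$ based on the coordinates of the binary labels avoids any monochromatic odd cycle.
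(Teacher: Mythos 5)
Your proof is correct and follows essentially the same approach as the paper: decompose each colour class into a bipartite graph, encode each vertex by the $k$-tuple of sides it lies on, and apply pigeonhole to find two vertices with identical encodings, contradicting properness for the colour of the edge between them.
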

\begin{proof}
    For the sake of contradiction, suppose there is a $k$-edge-colouring of $K_{2^k+1}$ with no monochromatic odd cycle. Hence, for each colour $i \in [k],$ the graph induced by colour $c$ must be bipartite with a bipartition $A_i \cup B_i.$ Since there are more than $2^k$ vertices, by the pigeonhole principle there are two vertices $x, y$ such that $x \in A_i \iff y \in A_i$ for each $i \in [k]$. Let $j$ denote the colour of the edge $xy.$ Then, $xy$ is an edge of colour $j$ connecting two vertices which are in the same part of the bipartition $A_j \cup B_j,$ a contradiction.
\end{proof}

\subsection{The $6$-cycle}
The existence of a small host graph for the induced $6$-cycle is a simple consequence of the following two well-known results in extremal graph theory.

\begin{theorem}[Bondy, Simonovits \cite{bondy1974cycles}] \label{thm:bondy-simonovits}
    $\mathrm{ex}(n, C_6) = O(n^{4/3}).$
\end{theorem}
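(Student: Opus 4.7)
The plan is to prove the contrapositive: any $n$-vertex graph $G$ with $e(G) \geq C n^{4/3}$ edges, for a sufficiently large absolute constant $C$, contains a copy of $C_6$. I will use the moment method of Bondy--Simonovits. First, I would pass to a subgraph of large minimum degree by iteratively deleting vertices of degree less than $e(G)/(2v(G))$; this standard trick yields a subgraph $G' \subseteq G$ with $\delta(G') \geq d := Cn^{1/3}/2$ on some $n' \leq n$ vertices, and it suffices to find a $C_6$ in $G'$.

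For each ordered pair $(u,v)$ of distinct vertices of $G'$, let $W(u,v)$ denote the number of non-backtracking walks of length $3$ from $u$ to $v$. Since $\delta(G') \geq d$, each vertex is the start of at least $d(d-1)^2 \geq d^3/2$ non-backtracking walks of length $3$; hence $\sum_{v}W(u,v) \geq d^3/2$ for every $u$. Applying the Cauchy--Schwarz inequality across $v$ for each fixed $u$ and summing,
\[
\sum_{u,v} W(u,v)^2 \;\geq\; \sum_{u} \frac{(d^3/2)^2}{n'} \;\geq\; \frac{d^6}{4}.
\]
The crucial observation is that if two \emph{distinct} non-backtracking walks of length $3$ between a common pair $(u,v)$ are internally vertex-disjoint, then they together form a $C_6$ in $G'$. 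Under the $C_6$-free hypothesis, every pair of such walks between any common endpoints must therefore share an internal vertex or an internal edge. The goal is then to upper bound the number of these \emph{degenerate} pairs by $o(d^6)$ and derive a contradiction.

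To bound the degenerate pairs, I would classify them according to which internal vertex or edge is shared, and count each class using only the total edge count $e(G') = O(n'd)$ together with the minimum-degree bound, taking care that the resulting bound does not depend on $\Delta(G')$ (a crucial point, since the maximum degree could be as large as $n'$). A careful case analysis---combined with a second application of Cauchy--Schwarz to sums of the form $\sum_x d_{G'}(x)^2$---yields an upper bound of order $O(n' d^3)$ on the number of degenerate pairs, which is strictly below $\tfrac14 d^6$ once $d \geq C' n^{1/3}$ for a large enough $C'$. The main obstacle is precisely this degenerate-pair bookkeeping: handling walks that repeat vertices, isolating the contribution of closed sub-walks of length $\leq 5$, and making sure every estimate is independent of $\Delta(G')$. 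Restricting from the outset to non-backtracking walks and invoking the Bondy--Simonovits decomposition by type of shared internal structure is what makes this controllable and delivers the $O(n^{4/3})$ bound.
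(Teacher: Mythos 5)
The paper cites this as an external result and gives no proof; the original Bondy--Simonovits argument is a breadth-first-search/levels argument (grow the BFS tree from a vertex in a high-minimum-degree bipartite subgraph and show that either the levels expand too fast or a theta-subgraph, and hence a short even cycle, appears). Your proposal is a moment-method proof, which is a genuinely different route. That said, the sketch has a real gap at exactly the point you flag as ``the main obstacle'': the assertion that the number of degenerate pairs of non-backtracking $3$-walks is $O(n'd^3)$, established by a ``careful case analysis'' plus ``a second application of Cauchy--Schwarz to sums of the form $\sum_x d_{G'}(x)^2$.'' This is where essentially all of the content of the theorem lives, and the two tools you name are not sufficient to deliver it.

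Concretely, consider degenerate pairs $u a_1 a_2 v$ and $u a_1 b_2 v$ sharing the first internal vertex. Their number is
\[
\sum_{a_1} d_{G'}(a_1)\sum_{v} c(a_1,v)\bigl(c(a_1,v)-1\bigr),
\]
where $c(\cdot,\cdot)$ is the codegree, i.e.\ essentially a count of (pendant edge, $C_4$) incidences. Forbidding $C_6$ does \emph{not} forbid $C_4$'s or even bound codegrees: a $C_6$-free graph with minimum degree $d\sim n^{1/3}$ can still have pairs $(a_1,v)$ with codegree as large as $\Theta(n/d)\sim n^{2/3}$, and controlling the second moment $\sum_v c(a_1,v)^2$ genuinely requires a structural lemma about $C_6$-free graphs (for instance that when $c(x,y)\ge 4$, the common neighbours of $x,y$ pairwise have codegree exactly $2$, so that their private neighbourhoods are disjoint). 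Nothing of this kind follows from $e(G')=O(n'd)$ together with Cauchy--Schwarz on $\sum_x d(x)^2$; moreover $\sum_x d(x)^2$ itself is not controlled after the minimum-degree cleaning, since $\Delta(G')$ can still be $\Theta(n')$, so $\sum_x d(x)^2$ can be as large as $\Theta(n'^{\,7/3})\gg d^6$. To make a counting proof of this theorem rigorous one needs substantially more machinery (pass to a bipartite subgraph, regularise, and then a careful path count using the $C_6$-free structure, as in Naor--Verstra\"ete or Bukh--Jiang); the sketch does not supply these ingredients, so as written the argument does not close.
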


\begin{theorem}[Erd\H{o}s, R\'{e}nyi \cite{erdos-renyi}] \label{thm-c4-free-graph}
    For all large enough $n,$ there exists a bipartite $C_4$-free graph with $n$ vertices and $(1/4 + o(1))n^{3/2}$ edges.
\end{theorem}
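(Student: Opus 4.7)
The plan is to use the classical construction from incidence geometry, namely the bipartite incidence graph of a projective plane over a finite field. Specifically, for a prime power $q,$ work in $PG(2,q),$ the projective plane over $\mathbb{F}_q,$ which has $q^2+q+1$ points and $q^2+q+1$ lines; each line contains $q+1$ points, each point lies on $q+1$ lines, any two distinct points lie on a unique common line, and any two distinct lines meet in exactly one point. Build a bipartite graph $G_q$ whose two parts are the point set and the line set, with a point adjacent to a line iff the point lies on the line. Then $v(G_q) = 2(q^2+q+1)$ and $e(G_q) = (q+1)(q^2+q+1),$ and the graph is $C_4$-free because a $4$-cycle would consist of two points each incident to the same two lines, contradicting the axiom that two distinct points determine a unique line.

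To match the asymptotic claim for every sufficiently large $n,$ I would choose $q$ to be the largest prime power with $2(q^2+q+1) \le n,$ so that $q = (1+o(1))\sqrt{n/2}.$ Known bounds on prime gaps (e.g.\ the Baker--Harman--Pintz result, or even Bertrand's postulate with a weaker error term) guarantee such a prime power exists with $q^2 = (1+o(1)) n/2.$ The construction $G_q$ then has $(1+o(1))n/2$ vertices and $(1+o(1)) q^3 = (1+o(1))(n/2)^{3/2} = \tfrac{1}{2\sqrt{2}}(1+o(1)) n^{3/2}$ edges, already exceeding the claimed $\tfrac{1}{4} n^{3/2}.$ To bring the total number of vertices exactly up to $n,$ simply add at most $n/2 + O(\sqrt n)$ isolated vertices distributed between the two parts; this preserves the bipartite structure and the $C_4$-free property, and does not change the edge count, so the resulting graph still has $\bigl(\tfrac{1}{4}+o(1)\bigr) n^{3/2}$ edges (in fact more), as required.

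The main technical obstacle is purely number-theoretic: making the value of $n$ precise requires a prime power of roughly the correct size, and one must invoke a prime gap estimate. Any bound of the form ``there is a prime in $[m, m + m^{1-c}]$ for large $m$'' suffices, and since we only need a $1+o(1)$ approximation, even very weak gap results (e.g.\ those following from Chebyshev-type estimates combined with the fact that prime powers are dense) are enough. The verification of the $C_4$-free property and the degree count are immediate from the incidence axioms, so apart from this number-theoretic padding step there is no combinatorial difficulty.
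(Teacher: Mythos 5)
Your approach --- the bipartite incidence graph of a projective plane $PG(2,q)$ together with a prime-gap estimate to handle general $n$ --- is the standard proof of this statement, and the substance of the argument is correct: the incidence axioms immediately give $C_4$-freeness and the degree count, and the prime number theorem (or any gap bound of the form $p_{k+1}-p_k = o(p_k)$) suffices to place a prime $q$ with $q^2 = (1+o(1))\,n/2$. Since the paper merely cites the theorem without giving a proof, there is no alternative argument in the text to compare against.

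One small arithmetic inconsistency worth flagging: you write that $G_q$ has $(1+o(1))\,n/2$ vertices, but with $q$ chosen maximally subject to $2(q^2+q+1)\le n$ and $q^2=(1+o(1))n/2$, one in fact has $v(G_q)=2(q^2+q+1)=(1+o(1))\,n$, so only $o(n)$ padding vertices are needed, not roughly $n/2$. Your stated vertex count of $(1+o(1))n/2$ is incompatible with your (correct) edge count $(1+o(1))q^3=(1+o(1))(n/2)^{3/2}$; together they would force $q^2\approx n/4$ and hence $q^3\approx (n/4)^{3/2}$. The slip is benign for the conclusion: the incidence graph yields $\bigl(\tfrac{1}{2\sqrt2}+o(1)\bigr)n^{3/2}>\tfrac14 n^{3/2}$ edges in either accounting, and adding isolated vertices preserves both bipartiteness and $C_4$-freeness, which is all the lemma downstream actually uses.
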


\begin{lemma}\label{lem:gadget even}
     There exists a positive constant $C$ such that for any positive integer $k,$ there exists a graph $F_0(k)$ with $(Ck)^6$ vertices and $(Ck)^9 / 6$ edges such that any $k$-edge-colouring contains a monochromatic $6$-cycle forming an induced subgraph of $F_0(k).$ In particular, $\indsizeramsey(C_6) = O(k^9).$
\end{lemma}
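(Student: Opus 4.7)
The plan is to combine Theorem~\ref{thm:bondy-simonovits} and Theorem~\ref{thm-c4-free-graph} via pigeonhole, together with a simple structural observation: \emph{in a bipartite $C_4$-free graph, every $6$-cycle is automatically an induced subgraph}. Indeed, if $v_0 v_1 v_2 v_3 v_4 v_5 v_0$ is a $6$-cycle in such a host, then in the bipartition its vertices alternate between the two parts, so the only potential chords are $v_0 v_3$, $v_1 v_4$ and $v_2 v_5$; each such chord, together with a sub-path of length $3$ of the cycle, would form a $4$-cycle, contradicting $C_4$-freeness. With this in hand, no extra work will be needed to upgrade a monochromatic $C_6$ to an induced monochromatic $C_6$.

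Given this observation, I would construct $F_0(k)$ as follows. Let $c_0$ be the absolute constant provided by Theorem~\ref{thm:bondy-simonovits}, so that $\mathrm{ex}(n, C_6) \le c_0 n^{4/3}$ for all sufficiently large $n$, and fix $C > 6 c_0$. Set $N = (Ck)^6$. By Theorem~\ref{thm-c4-free-graph}, for all large enough $N$ there exists a bipartite $C_4$-free graph $\tilde{F}$ on $N$ vertices with at least $N^{3/2}/5$ edges. Let $F_0(k)$ be an arbitrary spanning subgraph of $\tilde{F}$ with exactly $(Ck)^9/6 = N^{3/2}/6$ edges; then $F_0(k)$ is still bipartite and $C_4$-free, and has the required number of vertices and edges. (For the finitely many small values of $k$ where $N$ is not yet ``large enough'' one can simply absorb this into the constant $C$.)

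For the Ramsey property, fix an arbitrary $k$-edge-colouring of $F_0(k)$. By pigeonhole, some colour class has at least $e(F_0(k))/k = C^9 k^{8}/6$ edges. On the other hand, Theorem~\ref{thm:bondy-simonovits} gives
\[
\mathrm{ex}(N, C_6) \;\le\; c_0 N^{4/3} \;=\; c_0 C^8 k^8 \;<\; \frac{C^9 k^8}{6},
\]
by our choice $C > 6 c_0$. Hence that colour class contains a monochromatic $6$-cycle, which by the structural observation above is induced in $F_0(k)$. The ``in particular'' bound $\indsizeramsey(C_6) = O(k^9)$ then follows immediately from the edge count of $F_0(k)$.

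The whole argument is essentially a one-line pigeonhole plus Turán-type calculation, so I do not anticipate a real obstacle. The only genuinely substantive (but still trivial) step is noting up front that $C_4$-freeness of the host automatically forces induced-ness of any $C_6$; without this observation one would need to argue separately that the monochromatic $C_6$ supplied by Bondy--Simonovits has no chords, which is where most naive attempts would otherwise get stuck.
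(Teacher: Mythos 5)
Your proof is correct and follows essentially the same route as the paper: take a bipartite $C_4$-free Erd\H{o}s--R\'enyi graph, pigeonhole to a dense colour class, apply Bondy--Simonovits, and observe the resulting $C_6$ is automatically induced (the paper phrases your chord argument as the equivalent statement that bipartite $+$ $C_4$-free implies girth at least $6$). The only cosmetic difference is that you trim the edge set to exactly $(Ck)^9/6$ to match the stated edge count literally, whereas the paper just works with the graph as given; both are fine.
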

\begin{proof}
    Clearly we may assume that $k$ is large enough. Let $n = (Ck)^6$ where $C$ is a large constant to be chosen later. Let $F_0(k)$ be a bipartite $C_4$-free graph on $n$ vertices with at least $n^{3/2}/8$ edges given by Theorem~\ref{thm-c4-free-graph}. Note that $F$ has girth at least $6$ because it is bipartite and $C_4$-free. Consider an arbitrary $k$-edge colouring of $F_0(k).$ Then, there exists a colour, say red, which contains at least $e(F_0(k)) / k \ge (C/8) n^{4/3}$ edges. Taking $C/8$ to be larger than the implied constant in Theorem~\ref{thm:bondy-simonovits}, we obtain that there exists a red $6$-cycle in $F_0(k).$ This cycle is necessarily induced since $F_0(k)$ has girth at least $6,$ thus completing the proof.
\end{proof}

\subsection{The $5$-cycle}
Following Alon, a graph $F$ is said to be an $(n, d, \lambda)$-graph if $F$ has $n$ vertices, is $d$-regular and all but the largest eigenvalue of $F$ are at most $\lambda$ in absolute value. As our host graph exhibiting the upper bound for $\indsizeramsey(C_5),$ we will use Alon's celebrated construction of a dense pseudorandom triangle-free graph.

\begin{theorem}[Alon \cite{alon1994explicit}] \label{thm:alon-graph}
    For every positive integer $t$ not divisible by $3$, there exists a triangle-free $(n, d, \lambda)$-graph with $n = 2^{3t}, d = (1/4 + o(1))n^{2/3}, \lambda = (9 + o(1))n^{1/3}.$
\end{theorem}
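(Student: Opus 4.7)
The plan is to construct the required graph as an explicit Cayley graph on the additive group of $\mathbb{F}_q^3$, where $q = 2^t$, so that $n = q^3 = 2^{3t}$ holds automatically. First, I would fix a symmetric connection set $S \subset \mathbb{F}_q^3 \setminus \{0\}$ of size $|S| = (1/4 + o(1))q^2$ cut out by an algebraic equation (a non-degenerate quadratic condition over $\mathbb{F}_q$, possibly together with an auxiliary linear constraint to fix the correct $1/4$ density). In characteristic $2$ every set is automatically symmetric ($-s = s$), so the Cayley graph $\Gamma = \mathrm{Cay}(\mathbb{F}_q^3, S)$ is well-defined and $d$-regular with $d = |S|$, giving $d = (1/4 + o(1))n^{2/3}$ as required.

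Second, I would verify triangle-freeness. A triangle in $\Gamma$ corresponds to a triple of pairwise distinct elements $s_1, s_2, s_3 \in S$ with $s_1 + s_2 + s_3 = 0$, so the task reduces to showing that $S$ is sum-free in this triple sense. The equation defining $S$ is chosen so that any such triple would force an algebraic identity in $\mathbb{F}_q$ which has no solutions — the key point being that this identity reduces to the existence of a primitive cube root of unity in $\mathbb{F}_q$, which fails precisely when $q \not\equiv 1 \pmod 3$, i.e.\ when $3 \nmid t$. This is exactly where the hypothesis on $t$ enters.

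Third, I would establish the eigenvalue bound $\lambda = (9 + o(1))n^{1/3} = O(q)$. Because $\Gamma$ is a Cayley graph on the abelian group $\mathbb{F}_q^3$, its eigenvalues are the Fourier coefficients
\[
    \widehat{1_S}(v) = \sum_{s \in S} \psi\bigl(\langle v, s \rangle\bigr), \qquad v \in \mathbb{F}_q^3,
\]
where $\psi$ is a fixed nontrivial additive character of $\mathbb{F}_q$ and $\langle \cdot, \cdot \rangle$ is a non-degenerate pairing. The top eigenvalue $d$ corresponds to $v = 0$; for any other $v$, the sum is an exponential sum over the algebraic variety cut out by the defining equation of $S$, and a direct application of the Weil bound yields $|\widehat{1_S}(v)| = O(q) = O(n^{1/3})$, with an explicit constant that can be tracked to match $9 + o(1)$.

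The principal obstacle lies in choosing $S$ so that all three conditions — the precise size $(1/4 + o(1))q^2$, the sum-free-triples property for triangle-freeness, and the Weil-type bound for the nontrivial eigenvalues — hold simultaneously with the stated leading constants. Matching these explicit constants pins down the construction quite tightly (suggesting $S$ should sit inside a well-chosen norm or conic hypersurface), and verifying the algebraic identity that rules out sum-free triples exactly when $3 \nmid t$ is the most delicate step; the character-sum estimate, while technical, is standard once the defining variety is identified.
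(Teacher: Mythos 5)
The statement you are asked to prove is cited in the paper as a black box from Alon's 1994 paper; the authors give no proof of it, so there is no in-paper argument to compare against. Your plan captures the right high-level shape of Alon's argument: the graph is a Cayley graph on an abelian $2$-group of order $q^3$ with $q = 2^t$, the eigenvalues are character sums over the connection set, and the nontrivial ones are controlled by a Weil-type bound. That part is sound.

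However, the key mechanism you propose for triangle-freeness contains a genuine error. You claim that the identity ruling out triangles ``reduces to the existence of a primitive cube root of unity in $\mathbb{F}_q$, which fails precisely when $q \not\equiv 1 \pmod 3$, i.e.\ when $3 \nmid t$.'' The first equivalence is fine, but the second is false: for $q = 2^t$ one has $2^t \equiv (-1)^t \pmod 3$, so $q \equiv 1 \pmod 3$ exactly when $t$ is \emph{even}, which is unrelated to whether $3 \mid t$ (e.g.\ $t=2$ gives $q=4\equiv 1$ yet $3\nmid t$, and $t=3$ gives $q=8\equiv 2$ yet $3\mid t$). The actual role of the hypothesis $3\nmid t$ is different: it is equivalent to $\mathbb{F}_{2^3}\not\subseteq \mathbb{F}_{2^t}$ (equivalently $7\nmid q-1$), and Alon exploits this to rule out nontrivial solutions of the algebraic system that would produce a triangle. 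So the heart of your sketch --- why the divisibility hypothesis kills triangles --- does not work as stated. In addition, you set things up on the $\mathbb{F}_q$-vector space $\mathbb{F}_q^3$ with a quadratic-plus-linear constraint cutting out $S$, but a quadric in $\mathbb{F}_q^3$ has about $q^2$ points and an extra linear condition would divide by $q$, not by $4$; Alon's construction instead uses the field $\mathbb{F}_{q^3}$ and its multiplicative/norm structure to define $S$ and to make the Weil bound apply cleanly. As you acknowledge in your last paragraph, the construction is not actually pinned down, and the divisibility mix-up shows the proposed route would not close the gap.
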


In the proof we use the well known result about $(n,d,\lambda)$-graphs known as the Expander Mixing Lemma. Given a graph $G$ and two sets $S, T \subseteq V(G),$ we denote by $e_G(S, T)$ the number of pairs $(u, v) \in S \times T$ such that $uv \in E(G).$ Note that, by definition, the edges inside $S \cap T$ are counted twice.
\begin{lemma}[e.g. \cite{krivelevich2006pseudo}] \label{lem:expander-mixing}
    Let $G$ be an $(n, d, \lambda)$-graph. Then, for any two subsets $S, T \subseteq V(G),$
    \[ |e_G(S, T) - \frac{d|S||T|}{n}| \le \lambda \sqrt{|S||T|}. \]
\end{lemma}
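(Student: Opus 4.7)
The plan is a standard spectral argument. Since $G$ is $d$-regular, the adjacency matrix $A$ of $G$ has the all-ones vector $\mathbf{1}$ as an eigenvector with eigenvalue $d$, and by hypothesis every other eigenvalue has absolute value at most $\lambda$. Writing $\mathbf{1}_S, \mathbf{1}_T \in \mathbb{R}^{V(G)}$ for the characteristic vectors of $S$ and $T$, the quantity of interest is $e_G(S,T) = \mathbf{1}_S^T A \mathbf{1}_T$ (this is exactly the convention stated before the lemma, so edges inside $S\cap T$ are counted twice), and the task reduces to estimating this bilinear form.

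First I would decompose each characteristic vector along $\mathbf{1}$ and its orthogonal complement, writing
\[ \mathbf{1}_S = \frac{|S|}{n}\mathbf{1} + f_S, \qquad \mathbf{1}_T = \frac{|T|}{n}\mathbf{1} + f_T, \]
where $f_S$ and $f_T$ are orthogonal to $\mathbf{1}$. Substituting into $\mathbf{1}_S^T A \mathbf{1}_T$ and using $A\mathbf{1} = d\mathbf{1}$ together with $\mathbf{1}^T f_S = \mathbf{1}^T f_T = 0$, the two cross terms vanish and one obtains
\[ e_G(S,T) = \frac{d|S||T|}{n} + f_S^T A f_T. \]

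To control the error $f_S^T A f_T$, I would note that $f_S, f_T$ lie in the subspace orthogonal to $\mathbf{1}$, which is spanned by the non-principal eigenvectors of $A$; on this subspace the spectral norm of $A$ is at most $\lambda$. Hence Cauchy--Schwarz (or diagonalisation of $A$ restricted to $\mathbf{1}^\perp$) yields $|f_S^T A f_T| \le \lambda \|f_S\|\|f_T\|$. Finally, by the orthogonality of the decomposition (Pythagoras),
\[ \|f_S\|^2 = \|\mathbf{1}_S\|^2 - \left\|\tfrac{|S|}{n}\mathbf{1}\right\|^2 = |S| - \frac{|S|^2}{n} \le |S|, \]
and analogously $\|f_T\|^2 \le |T|$, which gives the claimed bound $\lambda\sqrt{|S||T|}$.

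There is no genuine obstacle: the lemma is a textbook consequence of the spectral definition of an $(n,d,\lambda)$-graph, and each step above is a one-line computation once the orthogonal decomposition of $\mathbf{1}_S$ and $\mathbf{1}_T$ is written down. The only small care needed is to check that the convention for $e_G(S,T)$ matches $\mathbf{1}_S^T A \mathbf{1}_T$, which it does by the remark immediately preceding the lemma.
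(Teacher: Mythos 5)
Your proof is correct and is the standard spectral argument for the Expander Mixing Lemma; the paper does not prove this lemma itself but simply cites it (e.g.\ to Krivelevich--Sudakov), and that reference uses exactly the orthogonal decomposition of $\mathbf{1}_S,\mathbf{1}_T$ along $\mathbf{1}$ and its complement that you wrote down. Every step you outline (the cross terms vanishing by $A\mathbf{1}=d\mathbf{1}$ and $f_S,f_T\perp\mathbf{1}$, the bound $\|A|_{\mathbf{1}^\perp}\|\le\lambda$, and $\|f_S\|^2\le|S|$ by Pythagoras) is correct, and the convention that $e_G(S,T)=\mathbf{1}_S^T A\mathbf{1}_T$ indeed matches the double-counting convention stated before the lemma.
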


\begin{lemma}\label{lem:gadget odd}
    $\indsizeramsey({C_5}) = e^{O(k \log k)}.$
\end{lemma}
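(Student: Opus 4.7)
My plan is to take $F$ to be Alon's $(n, d, \lambda)$-graph from Theorem~\ref{thm:alon-graph}, with $t$ coprime to $3$ chosen so that $n = 2^{3t}$ is of order $e^{\Theta(k\log k)}$. This gives $d = \Theta(n^{2/3})$, $\lambda = O(n^{1/3})$, and $e(F) = \Theta(n^{5/3}) = e^{O(k\log k)}$, matching the desired upper bound on $\indsizeramsey(C_5)$. The crucial initial observation is that because $F$ is triangle-free, any $C_5 \subseteq F$ is automatically induced in $F$ (a chord of a $C_5$ would close a triangle), so the task reduces to finding, in any $k$-edge-colouring of $F$, a monochromatic $C_5$ at all.

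Given any such colouring, the densest colour class $G_R$ has $e(G_R) \ge e(F)/k = nd/(2k)$. I pre-process by iteratively deleting vertices of red degree below $\delta := d/(6k)$, which costs at most $nd/(6k)$ red edges and leaves a subgraph with minimum red degree $\ge \delta$ while retaining $\Omega(nd/k)$ red edges. Picking any remaining red edge $v_1 v_2$ and setting $A := N_R(v_1) \setminus \{v_2\}$, $B := N_R(v_2) \setminus \{v_1\}$, each of size $\ge \delta - 1 = \Omega(d/k)$, triangle-freeness of $F$ forces $A \cap B = \emptyset$. A red $C_5$ through the edge $v_1 v_2$ is then equivalent to finding a vertex $x \notin \{v_1, v_2\} \cup A \cup B$ lying in $N_R(A) \cap N_R(B)$.

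The plan for producing such $x$ is to combine the minimum red-degree bound with the Expander Mixing Lemma (Lemma~\ref{lem:expander-mixing}) applied to the pseudorandom graph $F$: on the one hand $\sum_x |N_R(x) \cap A| \ge |A|\delta$ (and similarly for $B$); on the other hand, EML applied to singletons gives $|N_F(x) \cap A| \le d|A|/n + \lambda\sqrt{|A|}$ for every $x$. Dividing these estimates yields a lower bound on $|N_R(A)|$, and symmetrically on $|N_R(B)|$. In the regime $n = e^{\Theta(k\log k)}$ the pseudorandomness slack is strong enough that iterating a few red BFS layers from $v_1$ and $v_2$ can be used to boost these reach-sets past $n/2$, so pigeonhole yields the required $x$ and hence a red $C_5$.

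The main obstacle is precisely this growth estimate: EML controls $F$-edges rather than red edges, so each BFS step transfers the density information with some loss, and careful bookkeeping is needed to verify that $n = e^{\Theta(k\log k)}$ really suffices. Moreover, once the reach-sets intersect, the resulting monochromatic cycle a priori has length up to twice the BFS depth, and an extra argument using triangle-freeness (odd girth $5$) of $F$ together with the layered BFS structure is needed to pin the length down to exactly $5$. If this direct approach falls short, the fallback is to invoke a sparse-pseudorandom counting lemma, applicable since Alon's construction attains the extremal pseudorandomness $\lambda = \Theta(\sqrt{d})$; this directly guarantees $\Omega(d^5/k^5) \gg 1$ monochromatic $C_5$'s in $G_R$ as soon as $n = e^{\Theta(k\log k)}$.
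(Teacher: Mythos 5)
Your overall framing — take Alon's triangle-free $(n,d,\lambda)$-graph with $n = e^{\Theta(k\log k)}$ and observe that triangle-freeness makes every $C_5$ automatically induced — is exactly the paper's setup. But from there you diverge, and the divergence is where the proof breaks.

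The paper does \emph{not} try to find a monochromatic $C_5$ directly in the densest colour class. Instead it runs a colour-reduction iteration (Claim~\ref{cl:5-cyc}): in a $t$-coloured induced subgraph $F[U]$ it passes to the densest colour, cleans to minimum degree, and looks at the second neighbourhood $B$ of a single vertex $v$. Triangle-freeness makes the first neighbourhood $A$ an independent set; if $B$ contains even one edge of that colour you immediately get a monochromatic $C_5$, and otherwise $B$ is a set of size $\ge |U|/(20k)$ (via EML) on which only $t-1$ colours survive. After $k$ iterations this would produce a too-large independent set, contradicting EML, so a $C_5$ must appear at some stage. Crucially the argument never needs the single colour class to expand more than two steps out.

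Your plan, by contrast, tries to connect $N_R(A)$ and $N_R(B)$ by iterated red BFS and then pin the resulting cycle down to length five. Both of these steps have genuine gaps. First, the expansion estimate fails: with $|A| \approx d/k$ and red minimum degree $\delta \approx d/k$, the EML upper bound $e_F(A',A') \le \frac{d|A'|^2}{2n} + \frac{\lambda|A'|}{2}$ catches up with the degree lower bound $|A'|\delta$ as soon as $|A'|$ reaches order $n\delta/d \approx n/k$, so the adversary can arrange for the red component of $v_1$ to stall well below $n/2$ (a colour class with $e(F)/k$ edges only needs to be supported on roughly $n/\sqrt{k}$ vertices). No amount of ``pseudorandomness slack'' fixes this within a single colour — the obstruction is that the densest colour class need not itself inherit any pseudorandomness from $F$. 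Second, even if the two reach sets did meet, the cycle you would close has length $2\cdot(\text{BFS depth})+1$, not $5$; nothing about $F$ having odd girth $5$ forces every odd cycle in $F$ to have length $5$, so the ``extra argument to pin the length down'' does not exist in the form you suggest. Your counting-lemma fallback is also not directly usable: the relevant sparse counting/embedding lemmas for $C_5$ in $(n,d,\lambda)$-graphs either require a regularity partition of the colour class (which would reintroduce exactly the tower-type dependence this paper is designed to avoid) or come with quantitative conditions in $k$ that would have to be checked and are not obviously met. In short, the missing idea is the iterate-and-reduce-colours scheme; without it, finding the $C_5$ in one colour class head-on does not go through.
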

\begin{proof}
    We may assume that $k$ is sufficiently large. Let $F = F(k)$ be a triangle-free $(n, d, \lambda)$ graph, where $(20k)^{48k} \le n \le 64 (20k)^{48k}, d = (1/4 + o(1))n^{2/3}, \lambda = (9 + o(1))n^{1/3}$ whose existence is given by Theorem~\ref{thm:alon-graph}. We will prove the following claim.
    
    \begin{claim} \label{cl:5-cyc}
        Let $U \subseteq V(F)$ satisfy $|U| > n / (20k)^{k-1}$ and suppose that the edges of $F[U]$ are coloured with $t$ colours, where $1 \le t \le k$. Then, $U$ contains a monochromatic $5$-cycle or there exists a set $U' \subseteq U$ with $|U'| \ge |U| / (20k)$ such that at most $t-1$ colours appear on the edges of $F[U'].$
    \end{claim}
    
    The Lemma follows easily from this claim. Indeed, note that since $F$ is triangle-free, every $5$-cycle is induced. Now, consider an arbitrary $k$-edge-colouring of $F.$ Assume we have a set $U_i \subseteq V(F), 0 \le i \le k-1$ such that $|U_i| \ge n/(20k)^i$ and the edges of $F[U_i]$ are coloured in $k-i$ colours. By the claim, we either find a monochromatic $5$-cycle inside $F[U_i],$ in which case we are done, or a subset $U_{i+1} \subseteq U_i$ of size at least $n / (20k)^{i+1}$ with at most $k-i-1$ colours on the edges of $F[U_{i+1}].$ Starting with $U_0 = V(F),$ we either find a monochromatic $5$-cycle or after $k$ steps we get an independent set $U_k \subseteq V(F)$ of size at least $n/(20k)^k.$ However, by Lemma~\ref{lem:expander-mixing}, $e(U_k) \ge (d|U_k|^2 / n - \lambda |U_k|) / 2 > 0,$ since $|U_k| \ge n/(20k)^k > n \lambda / d.$ Hence, $U_k$ cannot be an independent set, so we have found a monochromatic $5$-cycle as required.
    
    \begin{proof}[Proof of Claim~\ref{cl:5-cyc}]
    Let $F'' \subseteq F[U]$ be the subgraph corresponding to the most frequent colour in $F[U]$ and let $F'$ be an induced subgraph of $F''$ with minimum degree at least 
    \[ d' \coloneqq \delta(F') \ge \frac{e(F'')}{|U|} \ge \frac{e(F[U])}{k|U|} \ge \frac{d|U|^2/n - \lambda |U|}{2k|U|} \ge \frac{|U|}{32k n^{1/3}}, \] where we used Lemma~\ref{lem:expander-mixing} and that $|U| \ge n/(20k)^{k-1} > n^{47/48}.$  Let $v \in V(F')$ be arbitary, let $A = N_{F'}(v)$ and $B = N_{F'}(A) \setminus \{v\},$ so $A$ is the first and $B$ the second neighbourhood of $v$ in $F'.$ Assume there is an edge $xy \in F'[B].$ Then, by definition, there exist vertices $x', y'$ in $A$ such that $xx', yy' \in E(F').$ Since $F'$ is triangle-free, we have $x' \neq y'$ and therefore, $vx'xyy'$ forms a $5$-cycle in $F',$ that is, a monochromatic $5$-cycle in $F[U].$ Now, assume that $e_{F'}[B] = 0.$ This means that $F[B]$ has at most $k-i-1$ colours on its edges, so we take $U' = B.$ What is left to show is that $|B| \ge |U| / (20k).$ Note that since $F'$ is triangle-free, $A$ is an independent set. Hence, $\sum_{a \in A} d_{F '}(a) = e(A, B) + |A|,$ implying that 
    \[ e(A, B) \ge |A| (d' - 1) \ge d'^2 / 2 \ge \frac{1}{2} \left(\frac{n^{47/48}}{32kn^{1/3}}\right)^2 = \frac{n^{31/24}}{2(32k)^2}. \]
    On the other hand, by Lemma~\ref{lem:expander-mixing}, $e(A, B) \le \frac{d}{n}|A||B| + \lambda \sqrt{|A||B|}.$ Note that $\lambda \sqrt{|A||B|} \le \lambda \sqrt{dn} < 5 n^{7/6} < \frac{1}{100} e(A,B)$, for large enough $k$. Combining, we have $\frac{d}{n}|A||B| \ge \frac{99}{100} e(A, B)$ and, therefore,
    \[ |B| \ge \frac{99}{100} \frac{n}{d}(d' - 1) \ge \frac{1}{2} 4n^{1/3} \frac{|U|}{32k n^{1/3}} > \frac{|U|}{20k}, \]
    as needed.
    \end{proof}
\end{proof}

\section{Host graph}\label{sec:host}
In order to construct our host graph, we will need a few basic definitions from hypergraph theory. We start with the following definition of a cycle in a hypergraph.
\begin{defn}[Berge cycle] \label{def:berge}
    Given a hypergraph $H,$ a \emph{Berge cycle} of length $t \ge 2$ is an alternating sequence of distinct vertices and edges $v_0, e_0, v_1, e_1, {\dots}, v_{t-1}, e_{t-1}$ such that $v_i \in e_{i-1}, e_i,$ where indices are modulo $t.$
\end{defn}
The \emph{girth} of a hypergraph is defined to be the length of its shortest Berge cycle.

\begin{defn}[Intersection graph]
    Given a linear hypergraph $H,$ we define its intersection graph $\calI(H)$ whose vertex set is the set of hyperedges of $H$ and where two hyperedges are adjacent if they share a vertex in $H.$ Additionally, each edge in $\calI(H)$ is labelled by the corresponding vertex in $V(H).$
\end{defn}

\begin{defn}[Sunflower cycle]
    Given a hypergraph $H$ and its intersection graph $\calI = \calI(H),$ a \emph{sunflower cycle} in $\calI$ is a cycle in which all the edges are labelled with the same vertex in $V(H).$
\end{defn}

Let us explain the motivation for the above two definitions. As described in the proof outline, our host graph construction begins by taking an $s$-uniform hypergraph $H$ on $N$ vertices with $CN$ random hyperedges, where $C = C(k)$. Its intersection graph $\calI(H)$ will be used to argue that a certain monochromatic cycle we find in the host graph is induced. For our arguments, we  require that $\calI(H)$ has local sparsity properties analogous to what is likely to hold in $\calG(N, C/N).$ More precisely, in $\calG(N, C/N),$ with high probability, there is no subgraph on at most $\alpha N$ vertices with average degree at least say $\frac{8}{3},$ where $\alpha > 0$ depends only on $C.$ However, such a statement does not hold for $\calI(H).$ Indeed, the set of hyperedges of $H$ containing a fixed vertex $v$ forms a clique in $\calI(H)$. Hence, it is not true that there are no subgraphs of $\calI(H)$ on a small set of vertices with average degree at least $8/3$ as in the graph case. However, it is true if we only consider subgraphs of $\calI(H)$ which do not contain sunflower cycles, i.e. subgraphs where from every clique in $\calI(H)$ corresponding to a set of hyperedges containing a fixed vertex we are only allowed to take a set of edges spanning a forest. The following lemma proves this (see \ref{no-dense-set}) as well as several other simple properties of $H$ (after small alterations) that we will use in our proof.

\begin{lemma} \label{lem:hypergraph}
    Let $s, g \ge 4$ be given positive integers, let $C \ge 1$ and set $\alpha = 10^{-6} C^{-3}s^{-8}.$ Then, for all large enough $N,$ there is an $s$-uniform hypergraph $H$ on $N$ vertices such that the following properties are satisfied:
    
    \begin{enumerate}[label=(P\arabic*)]
        \item \label{many-edges} $e(H) \in [CN/2, CN].$
        \item \label{max-degree} $\Delta(H) \le 8Cs.$
        \item \label{girth} $H$ has girth larger than $g$ (in particular, $H$ is linear).
        \item \label{no-dense-set} There is no subgraph $\calI' \subseteq \calI(H)$ such that $v(\calI') < \alpha N,$ $e(\calI') > \frac{4}{3} v(\calI')$ and there is no sunflower cycle in $\calI'.$ 
        \item \label{no-dense-aux-graph-set} For any $A \subset V(H)$ with $|A| \le \alpha N,$ there are at most $2 |A|$ hyperedges in $H$ which intersect $A$ in at least two vertices.
    \end{enumerate}
\end{lemma}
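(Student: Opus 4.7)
The plan is to take $H$ to be a cleaned version of the random $s$-uniform hypergraph $H_0$ on $N$ vertices obtained by independently including each potential hyperedge with probability $p = CN/\binom{N}{s}$, so that $\E[e(H_0)] = CN$. Properties \ref{many-edges} and \ref{max-degree} follow from standard Chernoff bounds for the number of edges and for each vertex's degree (mean $Cs$), together with a union bound over the $N$ vertices. For \ref{girth}, a first-moment calculation gives $\E[\#\text{Berge cycles of length }\ell] \le \tfrac{N^\ell}{2\ell}\bigl(\binom{N-2}{s-2} p\bigr)^\ell = O((Cs^2)^\ell)$, which sums to a constant over $\ell \le g$; hence with positive probability a bounded number of hyperedges can be deleted from $H_0$ to kill all short Berge cycles, preserving \ref{many-edges} and \ref{max-degree}. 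Property \ref{no-dense-aux-graph-set} is handled by another first-moment / union bound: for fixed $A$ with $|A| = a$, the number of hyperedges meeting $A$ in at least two vertices has expectation $O(Cs^2 a^2/N)$, a Chernoff-type tail gives failure probability at most $(eCs^2 a/N)^{2a}$, and summing $\binom{N}{a}$ times this over $a \le \alpha N$ is $o(1)$ for $\alpha = 10^{-6} C^{-3} s^{-8}$.

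The main work lies in \ref{no-dense-set}. I would first recast it combinatorially: since the absence of sunflower cycles forces the edges of $\calI'$ labelled by any fixed $v \in V(H)$ to span a forest on $\{x \in V(\calI') : v \in x\}$, one has $e(\calI') \le s \cdot v(\calI') - |V_X|$, where $X = V(\calI')$ and $V_X = \bigcup_{e \in X} e$. Thus \ref{no-dense-set} is equivalent to: no $X \subseteq E(H)$ with $|X| < \alpha N$ satisfies $|V_X| < (s - 4/3)|X|$. To bound the expected number of violating $X$, I would stratify by an \emph{ordered profile}: expose the edges of $X$ sequentially as $e_1, \ldots, e_x$ and let $t_i$ denote the number of vertices of $e_i$ already present in $e_1 \cup \cdots \cup e_{i-1}$. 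Summing over profiles with $\sum t_i = T = sx - |V_X|$ collapses via the identity $\sum_{\sum t_i = T} \prod_i \binom{s}{t_i} = \binom{sx}{T}$, yielding after direct computation a bound of the shape $\bigl(O(C) \cdot (e/\tau)^\tau s^{2\tau} (x/N)^{\tau - 1}\bigr)^x$ with $\tau = T/x \ge 4/3$. A brief calculus check shows that, for $\alpha < s^{-2}$, this factor is maximized at $\tau = 4/3$, where it evaluates to $(O(Cs^{8/3}) \alpha^{1/3})^x < (1/2)^x$ under the chosen $\alpha$. Summing over $x \ge 2$ and $T$ (the $T$-sum is geometric with ratio $\ll 1$ whenever $x \le \alpha N \ll N/s^2$) gives expected count strictly less than $1$, so combining with the other properties via a union bound, a suitable $H$ exists.

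The main technical obstacle is precisely this estimate for \ref{no-dense-set}: the naive approach of bounding $\E[\#\text{violating } X]$ by $\binom{N}{u}\binom{\binom{u}{s}}{x}p^x$ together with $\binom{N}{u} \le (eN/u)^u$ introduces an irreducible $e^{O(s)}$ factor that cannot be absorbed by the polynomial $\alpha$. The ordered-profile decomposition spreads the binomial factors across individual edges, producing a bound that scales cleanly with the ``overlap density'' $\tau = (sx - |V_X|)/x$ and is small precisely in the regime $\tau > 4/3$ where \ref{no-dense-set} would be violated.
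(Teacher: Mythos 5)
Your strategy is sound, and for the central property \ref{no-dense-set} you have found a genuinely different route from the paper, which is the main point of comparison. The paper fixes a set $U$ of $u$ hyperedge-indices and a graph $F$ on $U$ with $\tfrac{4}{3}u$ edges (the would-be $\calI'$), decorates each $F$-edge with a pair of positions in $[s]^2$ to obtain a graph $F'$ on $U\times[s]$, observes that the absence of sunflower cycles lets one choose $F'$ to be a forest, bounds the probability of the corresponding vertex-coincidences by $(N-s+1)^{-e(F)}$, and finishes with a union bound over $(U,F,F')$. You instead reformulate \ref{no-dense-set} as a vertex-spread condition: for a set $X$ of hyperedges and $v\in V(H)$, the $\calI$-edges labelled $v$ within $X$ form a clique on $X_v=\{e\in X:v\in e\}$ (and, as $H$ is linear, each $\calI$-edge has a unique label), so any sunflower-cycle-free $\calI'$ uses at most $|X_v|-1$ of them; summing over $v$ gives $e(\calI')\le s|X|-|V_X|$, with equality achievable by taking spanning trees of each clique, so \ref{no-dense-set} is equivalent to $|V_X|\ge(s-\tfrac43)|X|$ for all $X$ with $|X|<\alpha N$. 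The ordered-profile computation then does yield the claimed $\bigl(eC(e/\tau)^\tau s^{2\tau}(x/N)^{\tau-1}\bigr)^x$ (I checked this; your monotonicity observation, and the identification of the $e^{\Theta(s)}$ overcount in the naive $\binom{N}{u}$ bound, are both correct). The paper's decoration argument is arguably more direct; your reformulation to a spread condition on $|V_X|$ is cleaner conceptually and generalises more obviously.

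There is one genuine gap. You cannot obtain \ref{max-degree} from a Chernoff bound plus a union bound over the $N$ vertices: each degree has constant mean $Cs$, so the per-vertex failure probability is a constant independent of $N$ and the union bound gives $N\cdot e^{-\Omega(Cs)}\to\infty$. Indeed, the maximum degree of your binomial hypergraph is $\Theta(\log N/\log\log N)$ with high probability, far above $8Cs$. As in the paper, one must explicitly delete all hyperedges incident to a vertex of degree above $8Cs$ and then argue that the expected number of deleted edges is small, so that \ref{many-edges} survives (the other properties are monotone under edge deletion). Your phrase ``cleaned version'' suggests you may have had this in mind, but as written the argument for \ref{max-degree} is incorrect. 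The choice of binomial model versus the paper's $CN$ independent random hyperedges is immaterial.
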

\begin{proof}
    Let $m = CN.$ We construct an $s$-uniform hypergraph $H_0$ on the vertex set $V = [N]$ with edges $B_1, \dots, B_m$ which are independently chosen subsets of $V$ of size $s,$ removing duplicate edges. For any Berge cycle of length at most $g$ in $H_0,$ we remove an arbitrary edge of the cycle. Additionally, we remove all edges incident to a vertex of degree more than $8Cs.$ Let $H$ denote the resulting hypergraph. Note that $H$ satisfies \ref{max-degree} and \ref{girth} deterministically. We will show that it satisfies the other three properties with positive probability. We  repeatedly use the following observation: for any fixed $T \subseteq V$ and $i \in [m],$ 
    \begin{equation} \label{eq:prob-edge}
        \Pr[T \subseteq B_i] = \binom{N-|T|}{s-|T|} / \binom{N}{s} \le \left( \frac{s}{N} \right)^{|T|}.
    \end{equation} 
    In particular, for any $v \in V, i \in [m],$ we have $\Pr[v \in B_i] = s/N.$ Let us verify that $H$ satisfies \ref{many-edges} with probability at least $3/4.$ Indeed, the expected number of hyperedges removed while removing short Berge cycles is at most
    \[ \sum_{\ell = 2}^g N^\ell m^\ell \left(\frac{s}{N}\right)^{2\ell} = \sum_{\ell=2}^g C^\ell s^{2\ell} < CN/32, \]
    where we used that $N$ is large enough. Indeed, the left hand side is derived by fixing every possible sequence of $\ell$ distinct vertices of $H$ (each one of them corresponding to an intersection vertex of two consecutive edges in a Berge cycle of length $\ell$) and $\ell$ indices in $[m]$ (corresponding to the $\ell$ edges of a Berge cycle), using \eqref{eq:prob-edge} and applying a union bound. Note that the term $\ell = 2$ also accounts for all pairs of edges which have at least two vertices in common and, in particular, for duplicate edges. The expected number of edges touching a vertex of degree more than $8Cs$ is at most 
    \[ \sum_{i=1}^{m} \sum_{v \in B_i} \Pr[d_{H_0}(v) \ge 8Cs + 1] \le \sum_{i=1}^m s \cdot \Pr[\Bin(m-1, s/N) \ge 8Cs] \le ms 2^{-7Cs}=(Cs)2^{-7Cs}N \le N/32, \] 
    where we used standard Chernoff bounds (see, e.g., Theorem 2.1 in \cite{JLR}). By Markov's inequality, with probability at least $7/8$ we removed at most $CN/4$ edges to remove short Berge cycles and with probability at least $7/8$ we removed at most $N/4 < CN/4$ edges incident to vertices of large degree. In that case, $CN \ge e(H) \ge CN/2,$ so with probability at least $3/4,$ $H$ satisfies \ref{many-edges}.
    
    Next, we show that \ref{no-dense-set} holds with probability at least $3/4.$ For convenience, we consider the sets $B_i, i \in [m]$ to also be given a random ordering $x^i_1, \dots, x^i_s$ of their elements. Consider a set $U$ of $u \leq \alpha N$ indices in $[m]$ and a graph $F$ on the vertex set $U$ with $\frac{4}{3}u$ edges. The graph $F$ will correspond to the subgraph $\calI'$ from the statement. We build an auxiliary graph $F'$ on the vertex set $V(F') = V(F) \times [s].$ For every edge $vw \in E(F),$ choose arbitrary $i, j \in [s]$ and add the edge $(v,i)(w,j)$ to $F'.$ Note that given $F,$ there are $s^{2 e(F)}$ ways to construct the graph $F'.$ For each $v \in V(F),$ the set $\{v\} \times [s]$ represents the $s$ elements of the ordered set $B_v$ and an edge $(v,i)(w,j) \in E(F')$ corresponds to the event that $x^v_i$ and $x^w_j$ are equal. Assume there is a subgraph $\calI' \subseteq \calI(H)$ such that $V(\calI')= U, $ $e(\calI') \ge \frac{4}{3}u$ and $\calI'$ has no sunflower cycle. Then, one can choose the indices for each edge in $F$ such that $F'$ is a forest. Indeed, a cycle in $F'$ corresponds to a sunflower cycle in $\calI'.$ Suppose $F'$ is a forest and let $C_1, C_2, \dots, C_t$ be its connected components. For $i \in [t],$ denote the event
    \[ A_i = \{ x^v_j \text{ are equal for all } (v, j) \in C_i \}. \] 
    Note that $\Pr[A_i \vert A_1 \land \dots \land A_{i-1}] \le \left(\frac{1}{N-s+1}\right)^{v(C_i) - 1}.$ Indeed, conditioning on the values of some elements of a set $B_v,$ the next element of the set is chosen uniformly at random among at least $N-s+1$ remaining choices. Hence, we have
    \[ \Pr[A_i, \forall i \in [t]] \le \prod_{i=1}^{t} (N-s+1)^{- (v(C_i) - 1)} = (N-s+1)^{-e(F)} < \left(\frac{2}{N}\right)^{4u/3}, \] where in the equality we used that $F'$ is a forest and $e(F') = e(F).$ By taking a union bound over all choices of $U, F, F',$ we get that the probability that $H$ violates \ref{no-dense-set} is at most
    
    \begin{align*}
        \sum_{u=1}^{\alpha N} \binom{m}{u} \binom{\binom{u}{2}}{4u/3} s^{8u/3} \left(\frac{2}{N}\right)^{4u/3} &\le \sum_{u=1}^{\alpha N} \left( \frac{eCN}{u} \right)^u \left ( \frac{3e}{8} u\right)^{4u/3} s^{8u/3} \left(\frac{2}{N}\right)^{4u/3} \\
        &< \sum_{u=1}^{\alpha N} \left(10 C s^{8/3}\right)^u \left( \frac{u}{N} \right)^{u/3}\\
        &\le \sum_{u=1}^{\alpha N} \left(10 C s^{8/3}\right)^u \alpha^{u/3} = \sum_{u=1}^{\alpha N} 10^{-u} < 1/4,
    \end{align*}
    as needed.
    
    Finally, we verify that $H$ satisfies \ref{no-dense-aux-graph-set} with probability at least $3/4.$ Indeed, for a fixed set $A,$ the probability that $|B_i \cap A| \ge 2$ is at most 
    ${|A| \choose 2} \left(\frac{s}{N}\right)^2 \leq s^2 \left(\frac{|A|}{N}\right)^2.$ Then, \ref{no-dense-aux-graph-set} fails with probability at most
    \begin{align*}
        &\sum_{u=1}^{\alpha N} {N \choose u} {m \choose 2u} \left( \frac{su}{N}\right)^{4u} 
        \le \sum_{u=1}^{\alpha N}  \left( \frac{eN}{u}\right)^u \left(\frac{eCN}{2u}\right)^{2u} \left( \frac{su}{N}\right)^{4u} \\
        &< \sum_{u=1}^{\alpha N} \left(\frac{u}{N}\right)^u \left( e^2Cs^2\right)^{2u} < \sum_{u=1}^{\alpha N} \alpha^u (10 C s^2)^{2u} < \sum_{u=1}^{\alpha N} 10^{-u} < 1/4,
    \end{align*}
    where in the second to last inequality we used that $\alpha = 10^{-6} C^{-3} s^{-8}.$
    Hence, $H$ satisfies \ref{many-edges}--\ref{no-dense-aux-graph-set} with probability at least $1/4,$ implying the existence of the desired hypergraph.
\end{proof}

We are now ready to present our construction of the host graph used to prove both parts of Theorem~\ref{thm:main}, and Theorem~\ref{thm:non-induced}. The construction differs for the three statements we are proving mainly because of the use of different gadgets, but for all constructions we use Lemma~\ref{lem:hypergraph}, just with different parameters. Let us remark that linear hypergraphs were also used for constructing Ramsey graphs in \cite{draganic2022size}, but in a different context.

To prove Theorem~\ref{thm:main}, we can evidently assume that $k$ is a large enough integer.

\noindent\textbf{Host graph.} We first describe the structure of our host graph construction which is the same for all three statements and then we give the parameters in each of the three cases. When the argument changes depending on which statement we are proving, we will denote $a$ equals $1, 2$ or $3$ depending on whether we are proving Theorem~\ref{thm:main}~\ref{induced-even}, Theorem~\ref{thm:main}~\ref{induced-odd} or Theorem~\ref{thm:non-induced}, respectively. Hence, we denote the host graphs used in the proofs of Theorem~\ref{thm:main}~\ref{induced-even}, Theorem~\ref{thm:main}~\ref{induced-odd} and Theorem~\ref{thm:non-induced} by $\Gamma_1, \Gamma_2$ and $\Gamma_3,$ respectively. Similarly, for $a \in [3],$ we denote by $F_a, s_a, C_a, H_a, \calI_a, G_a, L_a$ the different objects appearing in our proofs, some of which we define shortly. However, most of our arguments hold regardless of which of the three statements we are proving. In such cases, we omit the subscript $a$ from $\Gamma_a, G_a$, etc.

We let $F$ be a certain small graph  which does not depend on $n$ and we let $s = v(F).$ We choose an appropriate parameter $C$ and set $g = (Cs)^{20}$ and $N = 10^{100}k^2C^6s^{14}n.$ We apply Lemma~\ref{lem:hypergraph} to obtain an $s$-uniform hypergraph $H$ satisfying \ref{many-edges}--\ref{no-dense-aux-graph-set} for the given parameters.
Finally, we build a host graph $\Gamma$ on the vertex set $V(H)$ by arbitrarily placing an isomorphic copy of $F$ on each set of $s$ vertices forming a hyperedge in $H.$ The paremeters are chosen as follows.

\begin{itemize}
    \item Parameters for Theorem~\ref{thm:main}~\ref{induced-even} (induced even case, $a=1$)\\
    Let $F_1$ be the $k$-induced-Ramsey graph for the $6$-cycle provided by Lemma~\ref{lem:gadget even}. Recall that $s_1 = v(F_1) = O(k^6)$ and $e(F_1) = O(k^9).$ Set $C_1 = 10^{20}k$ and note that $e(\Gamma_1) = O(C_1Nk^9) = O(k^{102}) n,$ as required.
    \item Parameters for Theorem~\ref{thm:main}~\ref{induced-odd} (induced odd case, $a=2$)\\
    Let $F_2$ be the $k$-induced-Ramsey graph for the $5$-cycle provided by Lemma~\ref{lem:gadget odd}. Recall that $s_2 = v(F_2) = e^{O(k \log k)}.$ As above, set $C_2 = 10^{20}k$ and note that $e(\Gamma_2) = e^{O(k \log k)}n.$
    \item Parameters for Theorem~\ref{thm:non-induced} (non-induced case, $a=3$)\\
    Let $F_3$ be the complete graph on $s_3 = 2^k + 1$ vertices. Set $C_3 = 10^{20} \cdot k2^{3k}.$ We have $e(\Gamma_3) = O(CN \cdot 2^{2k}) = e^{O(k)}n.$
\end{itemize}

\noindent\textbf{Auxiliary graph.} Given a $k$-edge-colouring of the host graph $\Gamma_a$ with $a \in [3]$, we define an auxiliary $k$-edge-coloured graph $G_a$ on the same vertex set. 

\begin{itemize}
    \item Auxiliary graph for Theorem~\ref{thm:main} (induced case, $a \in \{1,2\}$)\\
    Recall that in each copy of $F$ that we placed in $\Gamma_a$, there exists a monochromatic induced cycle of length $L_1=6$ in the even case $a=1$, and of length $L_2=5$ in the odd case $a=2$. For each copy of $F_a$, choose one such cycle and choose a single pair of vertices at distance two on the cycle. Place an edge in $G_a$ joining the two vertices and colour it by the colour of the corresponding cycle.
    \item Auxiliary graph for Theorem~\ref{thm:non-induced} (non-induced case, $a=3$)\\
    By Lemma~\ref{lem:any-odd-cycle}, in each copy of $F_3$ placed in $\Gamma_3,$ there is a monochromatic odd cycle. For each copy of $F_3,$ choose one such cycle and let $L_3$ denote the most frequent length among these cycles in $\Gamma_3$. For each of the chosen cycles of length $L_3$, choose a single pair of vertices at distance $(L_3 - 1) / 2$ on the cycle and join them by an edge in $G_3$ whose colour is the same as the colour of the cycle.
\end{itemize}

We use $\calI_a = \calI_a(H_a)$ to denote the intersection graph of $H_a.$ For an edge $uv \in G_a,$ we use $h(uv)$ to denote the hyperedge in $H$ containing $u$ and $v$ and we use $h^{-1}$ to denote the inverse function of $h,$ so $h^{-1}(h(uv)) = uv.$

\section{Proof of Theorem~\ref{thm:main} and Theorem~\ref{thm:non-induced}}
In this section we present the proofs of our theorems. The proofs are split into four subsections. In the first subsection we define the relevant notions and reduce the problem of finding a monochromatic (induced) cycle of length $n$ in the host graph to the problem of finding a \emph{good} cycle (see Definition~\ref{def:good-cycle}) of approximate length in the auxiliary graph. The details (see \Cref{lem:cycle in host graph}) differ for the three different statements we are proving. In the remaining three subsections, we give a unified proof of the existence of the desired good cycle in the auxiliary graph. Each of these subsections deals with a different stage of building the good cycle, as outlined in Section~\ref{sec:outline}.

\subsection{Setting up}\label{sec:setup}
We start with a definition which is crucial in our proof. It describes paths in $G$ which we will use later to guarantee the existence of (induced) paths in the host graph.

\begin{defn} Given a path $P = (v_1, v_2, \dots, v_\ell)$ in $G$, we say that $P$ is \emph{good} if the following hold (see Figure
~\ref{fig:good path}).
\begin{itemize}
\item For every two edges $e,e'$ in $P$, $h(e)$ and $h(e')$ are disjoint unless $e$ and $e'$ are adjacent in $P$ (in which case $h(e) \cap h(e') = e\cap e'$).
\item There is no hyperedge in $H$, besides the hyperedges $h(v_iv_{i+1})$, which intersects $\cup_{i\in[\ell-1]}h(v_iv_{i+1})$ in more than one vertex. 
\end{itemize}
Otherwise, we say $P$ is \emph{bad}.
\end{defn}

\begin{figure}[h]
    \caption{Example of a good path. Edges of the path are represented by red segments and their corresponding hyperedges by full-lined ovals. By the first property of a good path, these hyperedges intersect only in the vertices on the path as depicted while by the second property, there is no other hyperedge intersecting the union of these hyperedges in more than one vertex, i.e. the dashed hyperedge on the picture does not exist.}
    \includegraphics[scale=0.8]{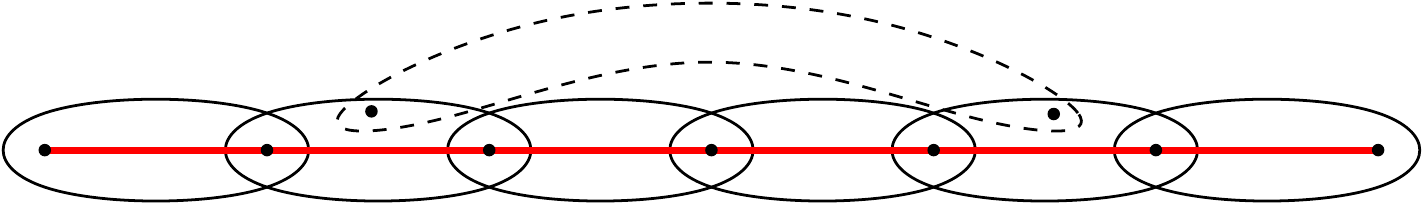}
\label{fig:good path}
\end{figure}

\begin{lemma}\label{lem:short is good}
Let $P = (v_1, v_2, \dots, v_\ell)$ be a path in $G$. If $\ell \le g,$ then $P$ is good.
\end{lemma}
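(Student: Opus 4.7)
The plan is to prove the contrapositive: if $P$ is bad and $\ell \leq g$, then we shall derive a contradiction with property \ref{girth} by producing a Berge cycle of length at most $g$ in $H$.

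Write $e_i = v_iv_{i+1}$ for $i \in [\ell-1]$, and note that the hyperedges $h(e_1), \ldots, h(e_{\ell-1})$ are pairwise distinct because the host-graph construction places only one edge of $G$ inside each copy of $F$. Let $H' = \{h(e_1), \ldots, h(e_{\ell-1})\}$ and consider its bipartite incidence graph $B$, with parts $V(H)$ and $E(H')$ joined whenever a vertex lies in a hyperedge. The path $P$ traces out a simple path $\pi = v_1 - h(e_1) - v_2 - h(e_2) - \cdots - h(e_{\ell-1}) - v_\ell$ in $B$. If $P$ were good, then $B$ would reduce to $\pi$ together with pendant leaves (the unused vertices of each hyperedge) and hence would be a forest, so $H'$ would contain no Berge cycle. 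Conversely, each way in which $P$ can fail to be good produces an extra edge in $B$ (or in $B(H' \cup \{f\})$ for an external hyperedge $f$) and hence a cycle: if the first condition fails with some $w \in h(e_i) \cap h(e_j)$ beyond what $\pi$ forces, the incidences $w - h(e_i)$ and $w - h(e_j)$ close a cycle in $B$ together with the spine segment between $h(e_i)$ and $h(e_j)$; if the second condition fails with vertices $u \neq u' \in f$ lying in $h(e_i)$ and $h(e_j)$ respectively, the incidences $f - u$ and $f - u'$ close a cycle with that same segment (or directly a $4$-cycle through $f, u, h(e_i), u'$ when $i = j$). Since a cycle of length $2t$ in such a bipartite incidence graph is exactly a Berge cycle of length $t$ in the hypergraph, and at most $\ell$ hyperedges are involved, we obtain a Berge cycle in $H$ of length at most $\ell \leq g$, contradicting property \ref{girth}.

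The one bookkeeping subtlety, and the only real obstacle, is to make sure that the closed walk just described is an actual simple cycle — i.e., that the extra vertex $w$ (or $u, u'$) does not coincide with some intermediate $v_t$ lying on the used segment of $\pi$. The standard fix is to pick the witness of badness with $j - i$ minimal. If then $w = v_t$ for some intermediate $t$, then $w$ also lies in $h(e_{t-1})$ or $h(e_t)$, yielding a strictly closer pair of hyperedges sharing a vertex and violating minimality; the residual boundary cases $t \in \{i, i+1, j, j+1\}$ collapse into a Berge $2$-cycle directly. With this minimality choice in hand, the extracted cycle is simple, has length between $2$ and $\ell \leq g$, and delivers the desired contradiction with property \ref{girth}.
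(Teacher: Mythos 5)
Your proof is correct in substance but takes a genuinely different route from the paper. The paper argues by induction on $\ell$: after handling $\ell = 2, 3$ directly (using linearity and the absence of Berge $3$-cycles), it assumes the two sub-paths $(v_1,\dots,v_{\ell-1})$ and $(v_2,\dots,v_\ell)$ are good, deduces that the first goodness condition holds for $P$, and then shows that any external hyperedge violating the second condition must intersect $h(e_1)$ and $h(e_{\ell-1})$, immediately yielding a Berge cycle of length $\ell \le g$. The induction hypothesis is what rules out all degenerate coincidences, so essentially no case analysis is needed beyond checking that the offending vertices differ from $v_2$ and $v_{\ell-1}$. Your approach instead extracts a Berge cycle directly from a minimal witness of badness, using minimality of $j - i$ to guarantee that the closed walk in the bipartite incidence graph is simple. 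Both arguments hinge on the same two facts (badness produces a short Berge cycle; $H$ has girth $> g$), and both are valid. The cost of your route is precisely the ``bookkeeping subtlety'' you flag: you must verify case by case that a repeated vertex in the candidate cycle yields either a strictly closer pair (contradicting minimality) or a Berge $2$-cycle (contradicting linearity), and you must do this separately for condition-(1) violations (two path hyperedges intersecting) and condition-(2) violations (an external hyperedge $f$), with different choices of where to measure ``$j - i$''. Your sketch is slightly imprecise on which boundary indices are actually problematic (e.g., $w = v_i$ or $w = v_{j+1}$ pose no issue since those vertices do not lie on the extracted cycle, while $w = v_{i+1}$ or $w = v_j$ only cause trouble when $j = i+2$, where they collapse to a Berge $2$-cycle), but the minimality idea does carry through. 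The paper's induction is arguably cleaner because it encapsulates the degeneracy-avoidance in the induction hypothesis once and for all.
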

\begin{proof}
    Denote $h_i = h(v_iv_{i+1}),$ for $1 \le i \le \ell-1.$ We prove the lemma by induction on $\ell.$ If $\ell = 2,$ the statement follows from the fact that $H$ is linear. Now consider the case $\ell = 3$ and suppose there is some $f \in E(H)$ other than $h_1, h_2$ containing two vertices in $h_1 \cup h_2.$ If $v_2 \in f,$ then $f$ intersects $h_1$ or $h_2$ in at least two vertices, contradicting linearity of $H.$ Else, let $x \in f \cap h_1, y \in f \cap h_2$ be such that $x, y, v_2$ are all distinct vertices. Then, $x, h_1, v_2, h_2, y, f$ is a Berge $3$-cycle in $H$, contradicting \ref{girth}, since the girth of $H$ is larger than $g \geq 4$.
    
    Now, assume $\ell \ge 4$ and $P$ is not good. By the induction hypothesis the paths $P_1 = (v_1, \dots, v_{\ell-1})$ and $P_2 = (v_2, \dots, v_\ell)$ are good. We then also have that $h_{\ell-1} \cap h_1 = \emptyset,$ as otherwise $h_{\ell-1}$ intersects $h_1$ and $h_{\ell-2}$, which contradicts that $P_1$ is good. Hence, $P$ satisfies the first point in the definition of a good path. So, there is some $f \in E(H) \setminus \{h_1, \dots, h_{\ell-1}\}$ intersecting $\cup_{i \in [\ell-1]}h_i$ in at least two vertices. Recalling that $P_1$ and $P_2$ are good, we have that $f$ intersects $h_1$ and $h_{\ell-1}$ in some vertices $x \neq v_2$ and $y \neq v_{\ell-1}.$ But now $x, h_1, v_2, h_2, \dots, v_{\ell-1}, h_{\ell-1}, y, f$ is a Berge cycle of length $\ell \le g$ in $H$, contradicting \ref{girth}.
\end{proof}

\begin{observation}\label{obs:ruin}
Let $P$ be a good path in $G$, let $v$ be an endpoint of $P$, and let $u$ be a vertex outside of $P$. If $P+uv$ is a bad path in $G$, then there is a hyperedge $h\in V(H)$ (different from $h(vu)$) and an edge $e$ on $P$ at distance at least $2$ from $v$, such that both $\{h,h(vu)\},\{h,h(e)\}\in E(\mathcal I).$ We say that $h$ \emph{ruins} $P + uv.$
\end{observation}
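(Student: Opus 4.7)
The plan is to first observe that the first property in the definition of ``good'' is automatically preserved when passing from $P$ to $P+uv$. Since $P$ is good and $h(vu)\notin\{h(e_1),\dots,h(e_{\ell-1})\}$, the second property of $P$-goodness applied to $h(vu)$ gives $|h(vu)\cap X|\le 1$, where $X=\bigcup_{i\le\ell-1}h(e_i)$. Combined with $v\in h(vu)\cap h(e_{\ell-1})$ this forces $h(vu)\cap X=\{v\}$, and since $v\notin h(e_i)$ for $i\le\ell-2$ (by the first property of $P$-goodness applied to the pair $e_{\ell-1},e_i$), we conclude that $h(vu)$ meets only the adjacent path hyperedge, so the first property of goodness still holds in $P+uv$. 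Consequently, the badness of $P+uv$ must come from a failure of the second property: there exists $h\in E(H)\setminus\{h(e_1),\dots,h(e_{\ell-1}),h(vu)\}$ with $|h\cap(X\cup h(vu))|\ge 2$. A further appeal to $P$-goodness gives $|h\cap X|\le 1$, so $h$ meets $h(vu)$, and by linearity does so in a single vertex $y$. The remaining task is to show that this $h$ must also meet $h(e_j)$ for some $j\le\ell-3$, i.e.\ for some edge of $P$ at distance at least $2$ from $v$; taking $e=e_j$ then completes the proof, with $h\ne h(vu)$ and $h\ne h(e)$ automatic from $h\notin\{h(e_1),\dots,h(e_\ell)\}$.

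Suppose for contradiction that $h$ avoids $h(e_j)$ for every $j\le\ell-3$, so its intersection with $X$ lies inside $h(e_{\ell-1})\cup h(e_{\ell-2})$. I split into four subcases according to which of $h(e_{\ell-1})$, $h(e_{\ell-2})$ the hyperedge $h$ meets, each ruled out using linearity together with \ref{girth} (which forbids Berge $3$- and $4$-cycles in $H$). If $h$ misses both, then $h\cap(X\cup h(vu))=\{y\}$, contradicting the witness inequality. If $h$ meets only $h(e_{\ell-1})$ at a vertex $x$, then $v,x,y$ are pairwise distinct (any equality would collapse $|h\cap(X\cup h(vu))|$ to $1$ by linearity), so $h(e_{\ell-1}),h(vu),h$ forms a Berge $3$-cycle through $v,y,x$. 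If $h$ meets only $h(e_{\ell-2})$ at a vertex $x$, then $v_{\ell-1},v,y,x$ are pairwise distinct (using $h(e_{\ell-1})\cap h(vu)=\{v\}$, $h(e_{\ell-1})\cap h(e_{\ell-2})=\{v_{\ell-1}\}$, and the case assumption that $h$ misses $h(e_{\ell-1})$), yielding the Berge $4$-cycle $h(e_{\ell-2}),h(e_{\ell-1}),h(vu),h$ through these four vertices. Finally, if $h$ meets both $h(e_{\ell-1})$ and $h(e_{\ell-2})$, the two intersection points either coincide---forced by linearity to equal $v_{\ell-1}$, reducing to a Berge $3$-cycle exactly as in the second subcase---or are distinct, which would give $|h\cap X|\ge 2$, contradicting $P$-goodness.

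The main, if mild, obstacle is the bookkeeping in the middle two subcases: verifying pairwise distinctness of the candidate Berge-cycle vertices, so that the short cycles are genuine and \ref{girth} can be invoked. Each distinctness is a one-line consequence of linearity of $H$ together with the two identities above; for example $y=v$ would place $v$ in $h\cap h(e_{\ell-1})$, violating the case assumption, while $x=y$ would collapse the required size-$2$ intersection to a singleton. Once these are in place the girth hypothesis closes every subcase, yielding a ruining hyperedge $h$ and an edge $e=e_j$ on $P$ at distance at least $2$ from $v$ as required.
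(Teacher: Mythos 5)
Your proof is correct and follows essentially the same structure as the paper's: first rule out a failure of the first goodness property (your paragraph on $h(vu)\cap X=\{v\}$ matches the paper's first bullet), then observe that the ruining hyperedge $h$ must meet $h(vu)$ and cannot meet only hyperedges of the two path edges nearest $v$. The one genuine difference is how that last step is handled: the paper disposes of the near-$v$ edges by citing Lemma~\ref{lem:short is good} applied to the length-$3$ subpath $v_{\ell-2}v_{\ell-1}vu$, whereas you inline that argument by exhibiting explicit Berge $3$- or $4$-cycles and invoking \ref{girth} directly. This is a small trade-off: the paper's route is shorter and reuses a lemma already available, while yours is self-contained and makes the distinctness bookkeeping explicit. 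A minor polish you could add is to note up front that if $P$ has at most $2$ edges then $P+uv$ has at most $4$ vertices and is automatically good by Lemma~\ref{lem:short is good}, so one may assume $\ell\geq 4$ and all of $e_{\ell-1},e_{\ell-2},e_{\ell-3}$ exist; as written, the subcase involving $h(e_{\ell-2})$ and $v_{\ell-1}$ is only literally meaningful under that assumption, although the conclusion is vacuous in the excluded cases.
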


\begin{proof}
    Suppose $P+uv$ is a bad path. By definition, since $P$ is a good path, there are only two cases:
    \begin{itemize}
        \item $h(uv)$ intersects $h(e)$ for some edge $e\in P$, different from the last edge in $P$, which contains $v$. But this is not possible, since then $h(uv)$ intersects two hyperedges corresponding to edges on the path, which would imply that $P$ is bad, a contradiction.
        \item There is another hyperedge $h\in E(H)$, which intersects $h(e)$ and $h(e')$, for two distinct edges $e,e'\in P+uv$. One of those must be the edge $uv$, since otherwise we again get a contradiction with $P$ being good. The other one cannot be one of the last two edges of $P$, since otherwise the path consisting of those two edges and $uv$ would not be a good path, a contradiction with Lemma~\ref{lem:short is good}.
    \end{itemize}
    \vspace{-6mm}
\end{proof}

\begin{defn}[Good tree]
    A rooted tree in $G$ is called \emph{good} if its every path containing the root is good, otherwise the tree is \emph{bad}.
\end{defn}

\begin{defn}[Good cycle] \label{def:good-cycle}
    A cycle $Q$ in $G$ is said to be \emph{good} if there is a collection $\calP$ of good subpaths of $Q$ such that every pair of edges of $Q$ appears in a member of $\calP$. 
\end{defn}

We are now ready to prove that finding a monochromatic good cycle of certain approximate length in the auxiliary graph $G$ yields the desired (induced) cycle of length $n$ in the host graph $\Gamma.$
\begin{lemma}\label{lem:cycle in host graph}
Let $Q$ be a monochromatic good cycle of length $\ell$ in $G_a$. Then,
\begin{enumerate}[label=\arabic*)]
    \item If $a = 1,$ then there is a monochromatic induced cycle in $\Gamma_a$ of length $t$ for each even $t \in [2\ell, 4\ell].$
    \item If $a = 2,$ then there is a monochromatic induced cycle in $\Gamma_a$ of length $t$ for each $t \in [2\ell, 3\ell]. $
    \item \label{lem:cycle-in-host-non-induced} If $a = 3,$ then there is a monochromatic cycle in $\Gamma_a$ of length $t$ for each $t \in [\frac{L_3-1}{2} \ell, \frac{L_3+1}{2}\ell].$
\end{enumerate}
\end{lemma}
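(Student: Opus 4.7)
The strategy is, for each edge $e_i = u_iu_{i+1}$ of $Q$, to replace $e_i$ by one of the two arcs of the monochromatic cycle $C^{(i)}$ of length $L_a$ that lives in the copy of $F_a$ placed on $h_i := h(e_i)$; here $u_i$ and $u_{i+1}$ are at distance $d_a$ on $C^{(i)}$, where $d_1 = d_2 = 2$ and $d_3 = (L_3-1)/2$. Each such arc $\pi_i$ has length either $d_a$ or $L_a - d_a$, so concatenating $\pi_1, \dots, \pi_\ell$ yields a closed walk $Q'$ in $\Gamma_a$ of total length $\sum_{i=1}^\ell |\pi_i|$. Since flipping a single choice shifts the total by $|L_a - 2d_a|$ (which equals $2$ when $a=1$ and $1$ when $a \in \{2,3\}$) and the extreme values are $\ell d_a$ and $\ell(L_a - d_a)$, by choosing the arcs appropriately we can realize any $t$ in $[\ell d_a, \ell(L_a - d_a)]$ (of the correct parity when $a=1$), which matches the range prescribed in each of the three cases.

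Once the arcs are fixed I verify that $Q'$ is a (respectively, induced) cycle of $\Gamma_a$. The key claim is that for any two distinct $i,j \in [\ell]$,
\[
  h_i \cap h_j \;=\; e_i \cap e_j,
\]
equal to the singleton $\{u_{i+1}\}$ when $j = i+1$ (cyclically) and empty otherwise. Indeed, by Definition~\ref{def:good-cycle} there is a good subpath $P$ of $Q$ containing both $e_i$ and $e_j$, and the first bullet of the definition of a good path pins down $h_i \cap h_j$ as claimed. It follows that the internal vertices of $\pi_i$, which lie in $h_i \setminus \{u_i, u_{i+1}\}$, are disjoint from every $h_j$ with $j \neq i$, and hence from the vertex set of every other $\pi_j$ as well as from $\{u_1, \dots, u_\ell\}$. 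Combined with the distinctness of the $u_m$'s in $Q$, this shows that $Q'$ visits exactly $\sum_i |\pi_i|$ distinct vertices and is therefore a cycle.

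For $a \in \{1, 2\}$ it remains to rule out chords. Suppose for contradiction $xy \in E(\Gamma_a)$ is a chord of $Q'$; then $xy$ lies inside the copy of $F_a$ placed on some (by linearity, unique) hyperedge $h \in H_a$. If $h = h_i$ for some $i$, then by the intersection formula $V(Q') \cap h_i = V(\pi_i)$, so $x,y \in V(\pi_i)$; since $C^{(i)}$ is induced in $\Gamma_a[h_i]$ and $u_iu_{i+1} \notin E(C^{(i)})$ (as $d_a \ge 2$), any edge of $F_a$ with both endpoints on $\pi_i$ is already an edge of $\pi_i$, contradicting $xy \notin E(Q')$. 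Otherwise $h \neq h_k$ for every $k$; choose $i, j$ with $x \in h_i$ and $y \in h_j$. The case $i = j$ is impossible because $\{x, y\} \subseteq h \cap h_i$ would violate linearity of $H_a$, while for $i \neq j$ we take a good subpath $P$ of $Q$ containing $e_i$ and $e_j$ and invoke the second bullet of the good-path definition: the outside hyperedge $h$ meets $\bigcup_{e_k \in P} h_k$ in the two distinct vertices $x$ and $y$, a contradiction.

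I expect the main technical obstacle to be the chord analysis in the induced case, and specifically the bookkeeping needed when $x$ or $y$ coincides with a cycle vertex $u_m$ and so sits simultaneously in the two consecutive hyperedges $h_{m-1}, h_m$. This is exactly what prevents us from getting away with examining only a single edge or a single pair of adjacent edges of $Q$, and it is what forces the full strength of the good-cycle hypothesis --- that every pair of edges of $Q$ lies in a common good subpath.
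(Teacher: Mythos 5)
Your proposal is correct and follows essentially the same approach as the paper: replace each edge $e_i$ of $Q$ by one of the two arcs of the gadget cycle $C^{(i)}$ so as to reach the desired total length, use the good-cycle hypothesis (via the first bullet of the good-path definition) to pin down $h_i \cap h_j = e_i \cap e_j$ and hence distinctness of vertices of $Q'$, and then for $a\in\{1,2\}$ rule out chords by splitting into the case where the chord's hyperedge equals some $h_i$ (contradicting inducedness of $C^{(i)}$) and the case where it is a different hyperedge (contradicting the second bullet via the good subpath containing $e_i$ and $e_j$). Your writeup is, if anything, slightly more explicit than the paper's in spelling out the intersection claim and the chord-in-$h_i$ subcase, but the structure and the reliance on the good-cycle definition are identical.
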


\begin{proof}
    Let $Q = (v_0, v_1, \dots, v_{\ell-1})$ and suppose the desired cycle is of length $t.$  Throughout the proof we will consider the indices of vertices $v_i$ to be taken modulo $\ell.$  Note that the hyperedges $h(v_iv_{i+1})$ for  $0 \le i < \ell$ are all distinct because $G$ contains exactly one edge for every hyperedge in $H.$ Let us call the colour of $Q$ red. Recall that each edge $v_iv_{i+1}\in E(Q)\subseteq E(G)$ corresponds to a pair of vertices in a red cycle of length $6$ when $G=G_1$, of length $5$ when $G=G_2$, and length $L_3\leq 2^k+1$ when $G=G_3,$ their distance on the cycle being $2$ in the first two cases, while their distance is $(L_3-1)/2$ in the third case. By choosing the appropriate path on the cycle for each pair $v_iv_{i+1},$ in each of the three cases we obtain a red closed walk $Q' = u_0u_1\dots u_{t-1}$ of length $t,$ where $u_0 = v_0$. It remains to show that $Q'$ is indeed a cycle, that is, no vertices are repeated, and that it is induced in $G_a$ when $a \in \{1, 2\}.$ Denote by $U_j \subseteq V(Q')$ the set of vertices in $\Gamma_a$ forming the chosen paths between $v_j$ and $v_{j+1}$ and note that $U_j \subseteq h(v_jv_{j+1}).$ 
    
    Suppose first that $Q'$ is not a cycle so there are indices $0 \le j < j' < t$ such that $u_j = u_{j'}.$ By definition, $u_j$ comes from a path between two vertices $v_i,v_{i+1}$, and $u_{j'}$ from a path between two vertices $v_{i'},v_{i'+1},$ where $i \neq i',$ because we connected consecutive $v_i$'s by paths. Furthemore, the edges $v_iv_{i+1}, v_{i'}v_{i'+1}$ cannot be consecutive on the cycle. Indeed, suppose, without loss of generality, that $i' = i+1.$ However, the corresponding hyperedges $h(v_iv_{i+1}), h(v_{i'}v_{i'+1})$ intersect only in $v_{i+1},$ which is an endpoint of these paths, a contradiction. Finally, if the edges $v_iv_{i+1}, v_{i'}v_{i'+1}$ are not consecutive on $Q,$ it implies that the hyperedges $h_1 = h(v_iv_{i+1})$ and $h_2 = h(v_{i'}v_{i'+1})$ are not disjoint. Since $Q$ is a good cycle, there is a good subpath of $Q$ containing $h_1$ and $h_2,$ a contradiction. This shows that $Q'$ is indeed a red cycle and in particular, completes the proof of \ref{lem:cycle-in-host-non-induced}.
    
    Now, let $a \in \{1, 2\}$ and for the sake of contradiction, assume there is an edge $xy \in E(\Gamma_a)$ between two nonconsecutive vertices in $Q'$ and let $f \in E(H)$ be the unique hyperedge containing both $x$ and $y.$ If $f = h(v_jv_{j+1})$ for some $0 \le j \le \ell-1,$ this is a contradiction since the $5$ or $6$-cycle we found in $f$ is induced. Otherwise, $f$ is not equal to any $h(v_jv_{j+1}).$ Let $j \ne j'$ be indices such that $x \in U_j, y \in U_{j'}$ and let $P \in \calP$ be a good path containing $v_jv_{j+1}, v_{j'}v_{j'+1}.$ Then $f$ ruins $P,$ a contradiction.
\end{proof}

Recall that $L_1 = 6, L_2 = 5,$ hence for any $a \in \{1, 2, 3\},$ finding a monochromatic good cycle in $G_a$ of length between $\frac{2n}{L_a + 1}$ and $\frac{2n}{L_a - 1}$ yields the desired monochromatic cycle in $\Gamma_a.$

We will also need the following theorem by Krivelevich \cite{krivelevich2019expanders}, which states that locally sparse graphs contain a large expanding subgraph. For our application of this result, we will also require that the found subgraph has large average degree. The theorem in \cite{krivelevich2019expanders} is not stated with the average degree condition, but it can be easily extracted from its proof. For completeness, we include the proof in the appendix.

\begin{defn}
    Let $G = (V, E)$ be a graph on $n$ vertices, and let $\gamma > 0$. The graph $G$ is a $\gamma$-expander if $|N_G(U)| \ge \gamma|U|$ for every vertex subset $U \subseteq V$ with $|U| \le n/2.$
\end{defn}

\begin{restatable}[\cite{krivelevich2019expanders}]{theorem}{thmmichaelrestate} \label{t-sparse}
Let $c_1>c_2>1$, $0<\beta<1$, $\Delta>0$. Let $G=(V,E)$ be a graph on  $n$ vertices, satisfying:
\begin{enumerate}
\item $\frac{|E|}{|V|}\ge c_1$\,;
\item every vertex subset $U\subset V$ of size $|U|\le \beta n$ spans fewer than $c_2|U|$ edges;
\item $\Delta(G)\le \Delta$.
\end{enumerate}
Then $G$ contains an induced subgraph $G^*=(V^*,E^*)$ on at least $\beta n$ vertices which is a $\gamma$-expander, for $\gamma=\frac{c_1-c_2}{2\Delta\cdot\left\lceil\log_2\frac{1}{\beta}\right\rceil}$, with $|E^*| / |V^*| \ge \frac{c_1 + c_2}{2}.$
\end{restatable}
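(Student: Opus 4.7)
The plan is to adapt Krivelevich's original density-increment argument, tracking the edge count carefully so as to extract the stronger density bound $e(G^*)/|V(G^*)| \ge (c_1+c_2)/2$. Set $k := \lceil \log_2(1/\beta) \rceil$, so $\gamma = (c_1-c_2)/(2\Delta k)$. I construct a nested sequence of induced subgraphs $G = G_0 \supseteq G_1 \supseteq \cdots$ by the following rule: if $G_i$ is a $\gamma$-expander, halt and set $G^* := G_i$; otherwise pick $U_i \subseteq V(G_i)$ witnessing the failure of expansion, so that $|U_i| \le |V(G_i)|/2$ and $|N_{G_i}(U_i)| < \gamma|U_i|$, and let $G_{i+1} := G_i[V(G_i) \setminus U_i]$.

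Two invariants drive the analysis. First (\emph{size}), since $|V(G_{i+1})| \ge |V(G_i)|/2$, the vertex count is at most halved at each step, so it remains $\ge \beta n$ throughout the first $k$ iterations. Second (\emph{edges}), at each step the deleted edges are exactly those incident to $U_i$: at most $c_2|U_i|$ inside $U_i$ (by assumption~(2), provided $|U_i| \le \beta n$) plus at most $\Delta\cdot|N_{G_i}(U_i)| < \Delta\gamma|U_i|$ crossing to $N_{G_i}(U_i)$, totalling $(c_2+\Delta\gamma)|U_i|$ edges per step. Telescoping with $\sum_i |U_i| = n - |V(G^*)|$, $n/|V(G^*)| \ge 1$ and $\Delta\gamma \le (c_1-c_2)/2$, a short rearrangement yields
\[ \frac{e(G^*)}{|V(G^*)|} \;\ge\; (c_1 - c_2 - \Delta\gamma)\frac{n}{|V(G^*)|} + (c_2+\Delta\gamma) \;\ge\; c_1 \;\ge\; \frac{c_1+c_2}{2}. \]

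The delicate points are (a) ensuring the bad set $U_i$ can be chosen with $|U_i| \le \beta n$ so that assumption~(2) applies, and (b) showing that the loop halts at a $\gamma$-expander while $|V(G_i)| \ge \beta n$. For (a), whenever the naive bad witness has $|U_i| > \beta n$, I would descend into the smaller induced subgraph $G_i[U_i \cup N_{G_i}(U_i)]$ and iteratively refine, looking for a bad \emph{subset} of $U_i$ of size at most $\beta n$; the key observation is that any bad set contained in $U_i$ has the same external neighbourhood in $G_i$ as in this smaller subgraph. I expect this refinement to be the main technical wrinkle. For (b), the telescoping above actually shows that $e(G_i)/|V(G_i)| \ge c_1$ for every $i$; if the loop ever drove $|V(G_i)|$ down to $\beta n$, assumption~(2) would give density strictly less than $c_2 < c_1$, a contradiction. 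Hence $|V(G_i)| > \beta n$ throughout, and since $|V(G_i)|$ strictly decreases at each non-halting step, the loop halts with $G^*$ a $\gamma$-expander on more than $\beta n$ vertices, as required.
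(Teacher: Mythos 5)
Your approach is structurally different from the paper's, and it contains a genuine gap rather than a technical wrinkle. You iteratively delete a non-expanding witness $U_i$ from $G_i$ and control the lost edges by invoking assumption~(2), which only applies when $|U_i| \le \beta n$. But a failure of $\gamma$-expansion in $G_i$ is witnessed by some set of size up to $|V(G_i)|/2$, and nothing guarantees a witness of size at most $\beta n$ exists. Your proposed remedy --- refining inside $G_i[U_i \cup N_{G_i}(U_i)]$ --- does not produce one, because a non-expanding set need not contain a non-expanding subset: removing part of $U_i$ can only shrink the internal edge set while the remaining vertices may acquire large boundary. In fact the situation is worse than you suspect. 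Once the current subgraph has minimum degree at least $2c_1$ (which the density-increment formulation implicitly ensures by passing to a minimal dense subgraph), any bad witness $U$ satisfies
\[
2e(G_i[U]) \ge \sum_{v\in U} d_{G_i}(v) - e(U, N_{G_i}(U)) \ge 2c_1|U| - \Delta\gamma|U|,
\]
so $e(G_i[U]) \ge \bigl(c_1 - \tfrac{\Delta\gamma}{2}\bigr)|U| > c_2|U|$, and assumption~(2) then forces $|U| > \beta n$. So small bad witnesses are exactly what you should \emph{not} expect, and the telescoping edge bound, which loses only $(c_2+\Delta\gamma)|U_i|$ edges per step, cannot be justified.

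The paper avoids this by never touching non-expanding sets at all. It descends through a chain $G = G_0 \supseteq G_1 \supseteq \cdots$ where $G_{i+1}$ is a subset $W$ of an \emph{inclusion-minimal} induced subgraph $H_i \subseteq G_i$ of density at least $d_i = c_1 - i\delta$, chosen (if it exists) with $\beta n \le |W| \le |V(H_i)|/2$ and $e_{H_i}(W) \ge d_{i+1}|W|$. Minimality of $H_i$ guarantees every $W \subseteq V(H_i)$ \emph{touches} at least $d_i|W|$ edges, and the local sparsity in~(2) (for $|W| \le \beta n$) or the halting condition (for $|W| \ge \beta n$) caps the edges \emph{spanned} by $W$, so $e(W, V(H_i)\setminus W) \ge \delta|W|$ falls out directly; expansion is then immediate from the maximum-degree bound. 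The process can run at most $\lceil\log_2(1/\beta)\rceil$ times before producing a set of size at most $\beta n$ with density exceeding $c_2$, contradicting~(2). Your deletion-based framework would need to first pass to such a minimal dense subgraph to make expansion automatic --- but at that point you are reproducing the paper's argument, so it is cleaner to adopt the density-increment formulation from the start.
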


In what follows, we denote by $G'$ the graph obtained from $G$ as follows. Consider the subgraph $G_{red}$ of $G$ consisting of the edges of the densest color class.
In the induced case ($a \in \{1,2\}$), since $G$ has at least $\frac{CN}{2}$ edges (one for each gadget), we get that $G_{red}$ has at least $\frac{CN}{2k}\geq 10^{19} N$ edges.
In the non-induced case ($a=3$), since $G$ has at least $\frac{CN}{2\cdot 2^{k}}$ edges (one for each gadget which gives the most common monochromatic cycle), we conclude that $G_{red}$ has at least $\frac{CN}{2k2^k}\geq 10^{19}2^{2k} N$ edges.
Furthermore, by the last property in Lemma~\ref{lem:hypergraph}, every set $S\subset V(G)$ of size at most $\alpha N$, spans at most $2|S|$ edges. Hence, the conditions of Theorem~\ref{t-sparse} are satisfied for the graph $G_{red}$ with 
$\beta=\alpha=10^{-6}C^{-3}s^{-8}$, $\Delta=8Cs\geq \Delta(H)\geq \Delta(G_{red})$, $c_2=2$ and $c_1=10^{19}$ in the induced case ($a \in \{1,2\}$), while $c_1=10^{19}2^{2k}$ in the non-induced case ($a=3$). Thus we obtain a subgraph $G'_{red}$ of $G_{red}$ on at least $\beta n$ vertices which is $\gamma n$-expanding with average degree $d$, where $d\geq 10^{18}$ in the induced case ($a \in \{1,2\}$) and $d\geq 10^{18}2^{2k}$ in the non-induced case ($a=3$), and
$$\gamma=\frac{c_1-c_2}{2\Delta\log_2\frac{1}{\beta}}\geq \frac{10^{19}-2}{16Cs\log_2(10^6C^3s^8)}\geq \frac{1}{k^2Cs}$$
where in the last inequality we used $\log_2(Cs)= \log_2 (e^{O(k\log k)})= O(k \log k)$. Now let $G'$ be a maximal subgraph of $G_{red}'$ with minimum degree $\delta(G') \ge d/2 > 10^{17}$ in the induced case ($a \in \{1,2\}$) and $\delta(G') \geq 10^{17}2^{2k}$ in the non-induced case ($a=3$), so in each case $\delta(G')\geq 10^{15}L^2$ as we have that $L_1=6$, $L_2=5$ and $L_3\leq 2^k+1$. Note that $G'$ has at least $\alpha N$ vertices. Indeed, if that was not the case then we would have a subgraph of $G_{red}$ on less than $\alpha N$ vertices which contains at least $|V(G_{red})|d/4>2|V(G_{red})|$ pairs of vertices which are in an edge of $H$, a contradiciton with the last property of Lemma~\ref{lem:hypergraph}.

\subsection{Finding a long good path}
Now everything is set up to start building a good cycle in $G'$. This section is dedicated to finding a long good path in $G'$, which is the statement of the following lemma. In this lemma $L$ is one of $L_1, L_2$ or $L_3$, however, our arguments hold regardless of which of the three cases we are proving and therefore we omit the subscript.

\begin{lemma}
    There is a good path of length $\frac{2}{L}n$ in $G'.$
\end{lemma}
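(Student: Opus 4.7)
Following the strategy of Dragani\'c, Glock and Krivelevich~\cite{draganic2022short}, the plan is to run a modified depth-first search on $G'$ whose stack at any time represents a good path $P$. At the current endpoint $v$ of $P$, we attempt to extend by some $u \in N_{G'}(v) \setminus V(P)$ such that $P+uv$ remains good; if no such $u$ exists, we pop $v$ into the set $A$ of finished vertices, setting $B = V(G') \setminus (A \cup V(P))$. We track the maximum length of $P$ over the course of the DFS and aim to show it is at least $\frac{2n}{L}$.

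The central technical step is to bound, whenever $|P| < \frac{2n}{L}$, the set $X_v \subseteq N_{G'}(v) \cap B$ of \emph{blocked} extensions (those $u$ for which $P+uv$ is not good), showing $|X_v| \ll \delta(G') \ge 10^{15}L^2$. By Observation~\ref{obs:ruin}, a neighbor $u$ is blocked iff $h(uv)$ lies at $\calI$-distance at most $2$ from some hyperedge $h(e)$ of $P$ at $P$-distance at least $2$ from $v$; as distinct $u$'s give rise to distinct hyperedges $h(uv)$ containing $v$, it suffices to bound the number of hyperedges through $v$ in this $\calI$-ball. For a ``middle'' far edge $e$ (with $P$-distance from $v$ at most $g-3$, where $g=(Cs)^{20}$), no hyperedge $h \ni v$ at $\calI$-distance $\le 2$ from $h(e)$ corresponds to a neighbor $u \in B$: either $h$ is already a path hyperedge (hence contributes a path vertex, not an element of $B$), or one would combine the short Berge path from $v$ to $h(e)$ through $h$ with the Berge path traced by $P$ to produce a Berge cycle of length at most $g$, contradicting property~\ref{girth}. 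For the remaining ``very far'' portion---only nonempty when $|P|>g$---I plan to invoke property~\ref{no-dense-set}, applied to the $\calI$-ball of radius $2$ around the path hyperedges (first eliminating sunflower cycles by taking a spanning forest in each vertex-star of $\calI$), to globally control the number of blocking hyperedges through $v$.

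Given this bound, the proof is completed by a standard DFS-on-expander argument: at the moment of maximum $|P|$, the endpoint $v$ satisfies $N_{G'}(v) \cap B \subseteq X_v$, so nearly all of $N_{G'}(v)$ lies in $V(P) \cup A$. The $\gamma$-expansion of $G'$, together with $|V(G')| \geq \alpha N$, then forces $|V(P)|$ to be at least $\Omega(\gamma |V(G')|)$, which by our parameter choices ($\gamma \ge 1/(k^2 Cs)$ and $\alpha N = 10^{94}k^2 C^3 s^6 n$) is much greater than $\frac{2n}{L}$.

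The main obstacle is the bound on $|X_v|$ in the regime $|P|>g$: for $|P|\le g$ the girth argument alone already gives $|X_v|=0$, but for longer paths an aggregate argument through property~\ref{no-dense-set} is required to keep $|X_v|$ well below $\delta(G')$. Making this aggregation uniform in $v$ and in the current $|P|$---so that the counting of hyperedges through $v$ in the relevant $\calI$-neighborhood of the path remains bounded by a polynomial in $C, s$---is the main technical hurdle.
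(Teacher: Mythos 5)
Your overall framework—a modified DFS that maintains a good path and pops blocked vertices—is the same as the paper's, and your observation that the girth bound (as in Lemma~\ref{lem:short is good}) kills all blocked extensions arising from edges of $P$ within path-distance $g-3$ of $v$ is correct and is indeed used implicitly in the paper. The genuine gap is in the ``aggregate'' step you flag as the main hurdle: your plan is to bound $|X_v|$ \emph{per vertex} so that $|X_v|\ll\delta(G')$, and then run an expansion argument. But there is no reason for such a uniform per-vertex bound to hold. Each $u\in X_v$ corresponds to a hyperedge $h(uv)$ through $v$, and $v$ can be incident to up to $8Cs$ hyperedges, while $\delta(G')$ is only of order $10^{15}L^2$; in the odd non-induced case, $8Cs\approx 10^{20}k2^{4k}$ dwarfs $\delta(G')\approx 10^{15}2^{2k}$, and nothing in the construction prevents essentially all hyperedges through a single $v$ from lying within $\calI$-distance two of some far path hyperedge. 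Property~\ref{no-dense-set} controls average degree of sunflower-cycle-free subgraphs, not individual degrees, so it cannot rescue a per-vertex bound.

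The paper avoids this entirely by making the count aggregate: it does not try to bound the number of blocked extensions at any fixed vertex. Instead, every blocked extension contributes at least one new vertex and roughly $\tfrac{3}{2}$ new edges to an incrementally built sunflower-cycle-free subgraph $F\subseteq\calI$ (tracked alongside two failure sets $S_1,S_2$), so that a \emph{total} of more than $\Theta(\alpha N)$ blockings forces $e(F)>\tfrac43 v(F)$ on at most $\alpha N$ vertices, contradicting~\ref{no-dense-set}. The symmetric failure mode—a popped vertex with no unexplored neighbours at all—is charged to $S_1$ and ruled out by the local density property~\ref{no-dense-aux-graph-set}. In particular the paper's proof of this lemma makes no use of the $\gamma$-expansion of $G'_{\mathrm{red}}$ at all (and note $G'$ itself was not shown to be an expander, only a min-degree subgraph of one), so your closing ``standard DFS-on-expander'' step is both resting on a property that hasn't been established for $G'$ and, as written, too vague to pin down why $|P|$ must ever reach $2n/L$. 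To repair the argument, you would need to replace the per-vertex bound with an incremental bookkeeping of all blockings into a single subgraph of $\calI$, and replace the expansion argument with the two local-density contradictions.
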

\begin{proof}
To prove this lemma, we run a modification of the DFS graph search algorithm on $G'$, and by analyzing it we show the existence of the required path.
    We maintain four sets $P, S_1, S_2, U \subseteq V(G')$, together with a subgraph $F\subseteq \cI$ containing no sunflower cycle. In the beginning we set $P = S_1 = S_2 =  \emptyset$, $U=V(G'),$ and we let $F$ be the empty graph.
    The vertices in $P$ will always form a good path in $P$. As we explore the vertices of $G'$, we will either increase $P$, $S_1$ or $S_2$, while $U$ shrinks. 
    We add vertices from $P$ to $S_1$ when they do not have neighbours in the unexplored vertices, while to $S_2$ we will add vertices when we add new edges to the auxiliary graph $F\subseteq \cI$. The latter case will be a consequence of the existence of hyperedges which ruin the goodness of the current path.
    If $S_1$ grows too large, we will show the existence of a small set of vertices in $G'$ spanning too many edges in $G'\subset G$, which will contradict the local density condition of $G$. On the other hand, if $S_2$ grows too large, then we will show that the auxiliary graph $F\subseteq \cI$ has too many edges, contradicting the local density condition in $\cI$. As a consequence, this will imply that $P$ is large enough at some point, finishing the proof.

    The vertices in $P$ are kept in a stack. We run the following algorithm in rounds until $U=\emptyset$:
    
\noindent    \textbf{Start of round}
    \begin{enumerate}
        \item If ${P}=\emptyset$, remove an arbitrary vertex from ${U}$ and push it to ${P}.$ Let $v$ be the last vertex in ${P}.$
        \item If $v$ has no neighbours in ${U},$ pop $v$ from ${P}$ and add it to ${S_1}$. 
        \item\label{step:add to P} Else, let $u$ be any neighbour of $v$ in ${U}$, remove it from $ U$ and add it to $P$. 
        If the new path $P$ is good, add\footnote{\label{foot} We argue later why $h_1$ is not already in $V(F).$} $h(uv)$ to the vertex set of $F$.
        \item\label{step:remove from P} If $P$ is not good, do the following. Let $v'$ be the vertex in $P$ before $v$. Consider $h_1:=h(vu)$ as well as exactly one hyperedge $h_2$ which ruins ${P}+vu$ because it intersects $h_1$ and $h(e)$ for some edge $e$ on the path $P$ (given by Observation \ref{obs:ruin}).
        If $h_2$ is not in $V(F),$ add it to $V(F)$ and add the edge $\{h_2, h(e)\}$ to $E(F).$ Add\footnotemark[\value{footnote}] $h_1$ to $V(F)$ and add edges $\{h_1, h_2\}$ and $\{h_1, h(vv')\}$ to $E(F).$ Finally, for each vertex in the edge $h^{-1}(h_2)$ which is in $U$, remove it from $U$ and add it to $S_2$. Pop $u$ from $P$ and add it to $S_2$. (See Figure~\ref{fig:step-4} for an illustration.)
    \end{enumerate}
  \textbf{End of round}
  
  \begin{figure}[h]
    \caption{Illustration of Step 4 of the algorithm. Figure a) on the right depicts the case when $h_2$ is not already in $V(F)$ and then two vertices ($h_1$ and $h_2)$ are added to $V(F)$ and three edges ($\{h(vv'), h_1\}, \{h_1, h_2\}$ and $\{h_2, h(e)\}$) are added to $E(F).$ Figure b) depicts the case when $h_2$ is already in $V(F)$ and then one vertex ($h_1$) is added to $V(F)$ while two edges ($\{h(vv'), h_1\}$ and $\{h_1, h_2\}$) are added to $E(F)$.} \label{fig:step-4}
    \includegraphics[scale=0.89]{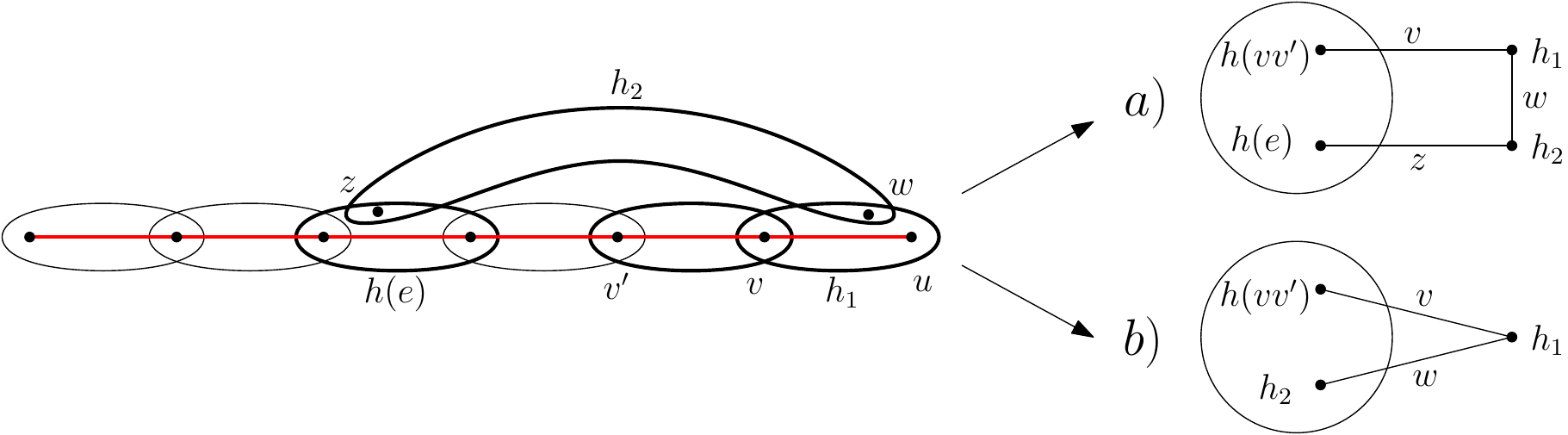}
\end{figure}
  
  We first show some simple invariants which are maintained during the execution of our algorithm.
 \begin{claim}
The following hold at the end of each round of our modified DFS algorithm:
\begin{enumerate}[label= (\Alph*)]
\item\label{p:goodpath} The vertices in $P$ form a good path in $G$.
\item\label{p:not in U} When a hyperedge $h(xy)$ is added to $V(F)$ then both $x$ and $y$ are not in $U$ from that point until the end of the algorithm.
\item\label{p:no edge} No vertex in $S_1$ is adjacent to $U$.
\item\label{p:size of F} At each point of the algorithm $F$ contains at least $|S_2|/3$ vertices and at most $|P|+|S_1|+2|S_2|$ vertices.
\item\label{p:density} At each point of the algorithm $F$ contains at least $\frac{3}{2}(v(F)-|P|-|S_1|)$ edges and no sunflower cycle.
\end{enumerate}
 \end{claim}
 \begin{proof}
  $ $\newline
     \begin{itemize}
     \item Property~\ref{p:goodpath} is trivially satisfied, since in Step~\ref{step:remove from P} we remove the vertex which made the path bad. Property~\ref{p:not in U} also holds since a hyperedge $h(xy)$ can be added to $V(F)$ only in Steps~\ref{step:add to P} or \ref{step:remove from P} and in both cases both its endpoints end up in $P \cup S_1 \cup S_2$. Furthermore, a vertex which is added to $P \cup S_1 \cup S_2$ at any point of the algorithm, stays in that set throughout. Also \ref{p:no edge} is easy to check, as a vertex is added to $S_1$ precisely when it has no neighbours to $U$, and since $U$ can only shrink, the claim holds. 
        
    \item For properties \ref{p:size of F} and \ref{p:density}, we first prove that the hyperedge $h_1=h(uv)$ from Steps~\ref{step:add to P}~and~\ref{step:remove from P} was not contained in $V(F)$ before the start of this round. Suppose for contradiction that it was. First, if it was added to $V(F)$ in some previous round, then by~\ref{p:not in U}, none of its vertices are in $U$ from the point $h_1$ was added to $V(F)$. Further, since we only add vertices to $V(F)$ in at most one of the Steps~\ref{step:add to P}~and~\ref{step:remove from P}, then $h_1$ could not have been added to $V(F)$ earlier in the same round.
    
    \item Now, we show \ref{p:size of F}. For the lower bound, note that $S_2$ only increases in Step 4, where each time at most $3$ vertices are added to $S_2$ and at least one vertex $h(uv)$ is added to $V(F).$ For the upper bound, notice that there are two ways in which we add vertices to $V(F)$. The first is when in Step~\ref{step:add to P} we add a vertex to $P$ (which either stays in $P$ or is moved to $S_1$). The second is when in Step~\ref{step:remove from P} we add at most two vertices to $V(F)$, while at the same time we add at least one vertex to $S_2$. This finishes part \ref{p:size of F} since $V(F)$ never decreases and vertices which are put in $S_2$ stay in $S_2$ throughout.
    
    \item To show the quantitative bound of \ref{p:density}, we prove that every time we reach Step~\ref{step:remove from P} and $P$ is bad, we either add one vertex and two edges to $F$, or we add two vertices and three edges. This would be enough, as in that step we added all but at most $|P|+|S_1|$ of the $v(F)$ vertices to $V(F)$. The remaining ones are added in Step~\ref{step:add to P}. Each time we enter Step~\ref{step:remove from P} and $P$ is not good, we distinguish two cases. In the first case, $h_2$ is already a vertex in $F$, hence to $F$ we only add the vertex $h_1$, together with its two edges to $h_2$ and $h(vv')$. In the second case, we also add $h_2$ to $F$, and together with it the edge $(h_2,h(e))$, which completes the first part of the proof.
    
    \item What is left is to argue that $F$ contains no sunflower cycle. Look at $F$ at an arbitrary point of the algorithm, and assume that it has no sunflower cycle; we now show that after an additional round no sunflower cycle is created.
    
    Trivially, when we add an isolated vertex to $V(F)$ in Step~\ref{step:add to P}, no sunflower cycle is created. 
    Now we consider Step~\ref{step:remove from P}, and suppose that the path $P$ is not good when we enter this step, as otherwise we are done.
    It is enough to show that the edges in $\cI$ touching the vertex $h_1=h(uv)$ which are added to $F$ have different labels, as well as that the two edges touching $h_2$ have different labels if $h_2$ is also added to $V(F)$ in this step. Thus, no sunflower cycle can be closed by these new edges, which would complete the proof. 
    
    First we consider $h(uv)$. The label of $(h_{vv'},h(uv))$ is $v$, so if $(h(uv),h_2)$ also had the label $v$, then $h_2$ also contains $v$ and hence it intersects $h(vv')$. By \Cref{obs:ruin}, $h_2$ also intersects $h(e)$ for some edge $e$ of $P$ at distance at least $2$ from $u$. Hence $h_2$ intersects the corresponding hyperedges of two distinct edges in $P-u$, which contradicts the fact that $P-u$ is good.
    
    Now, let us argue that if $h_2$ is added to $V(F)$, then the two edges added with it have different labels. Let the label of $(h_1,h_2)$ be $w$. First, note that $w\neq v$ by the same argument as in the previous passage. Now, if the label of the edge $(h_2,h(e))$ is also $w$, then the hyperedges $h(e)$ and $h_1$ intersect in $w$, which leads to a contradiction as the only hyperedge on the path which $h_1$ intersects is $h(vv')$, and they only share vertex $v$.
    
    \end{itemize}
    \vspace{-6mm}
 \end{proof}
 
Recall that $\alpha=10^{-6}C^{-3}s^{-8}$, $N = 10^{100}k^2C^6s^{14}n$ and $G'$ has at least $\alpha N$ vertices. Suppose for the sake of contradiction that at each point of the execution of the algorithm, $P$ is always of size $|P|\leq 2n/L < \alpha N / 10^6$.
Since the algorithm terminates when $U=\emptyset$, and $|P|\leq 2n/L$, at some point of the algorithm we have that either $|S_1|\ge\alpha N/100$ or $|S_2|\ge \alpha N/3$. We distinguish two cases depending on which of these two occurs first.

Suppose $|S_1|\ge\alpha N/100$ occurs first, and look at the moment when this happens. Then we have $|S_2|<\alpha N/3$. By property $\ref{p:no edge}$, all edges in $G'$ which touch $S_1$ are contained in $G[P\cup S_1\cup S_2]$. Thus we have a subgraph of $G$ on $|P\cup S_1\cup S_2|\leq \alpha N$ vertices and at least $\delta(G') |S_1| / 2\geq 10^{16}|S_1|>2|P\cup S_1\cup S_2|$ edges, contradicting \ref{no-dense-aux-graph-set} from Lemma~\ref{lem:hypergraph}.

On the other hand, when $|S_2|\geq \alpha N/3$ occurs first, we have that $|S_1|<\alpha N/100$. In that case, by \ref{p:size of F} we obtain a subgraph $F$ of $\cI$ with $|S_2| / 3 \le v(F) \le |P| + |S_1| + 2|S_2| \le \alpha N$ and using \ref{p:density} it has at least $\frac{3}{2}(v(F)-|P|-|S_1|)\geq \frac{4}{3}|v(F)|$ edges and no sunflower cycle, thus contradicting \ref{no-dense-set}.
\end{proof}

\subsection{Growing the initial trees}
Now that we have constructed a good path $P$ of length $2n/L$, we proceed with the second stage of the proof. Namely, we pick a vertex in the middle of $P$ to be the root, and we show how to attach (inside of $G'$) two large trees to the endpoints of $P$ so that the resulting rooted tree is still good.
Before we show the details, let us briefly outline this part of the proof.

Starting with one endpoint of the path, we try to attach to it a set $X_1$ of $10^4$ leaves so that the obtained tree $T$ is also good. Then we look at the other endpoint of $P$ and try to add a set $X_2$ of $10^4|X_1|$ leaves to that vertex so that $T+X_2$ is also a good tree. Then we again try to add $10^4|X_2|$ leaves adjacent to $X_1$ in that obtained tree and so on. We essentially try to make the tree larger (while preserving the goodness condition) by alternatively adding new leaves to one of the "sides" of the tree, each time adding a factor of $10^4$ more leaves to the other side then in the previous turn. Now, either we built a large good tree, in which case we are happy, or one of the following two cases happens. First, if the current set of leaves $X_1$ does not have at least $5\cdot 10^4|X_2|$ neighbours, then we will remove both attached trees from the current good tree to obtain a good path. 
Second, if $X_1$ does have many neighbours, but less than $10^4|X_2|$ can be attached to give a \textbf{good} tree, we again remove both attached trees from the current good tree to obtain a good path, and we start over.

Now, if any of those two scenarios happens too often, then we get the same type of contradiction as in the previous section. In particular, if the first scenario happens too often, it will be not hard to see that there exists a set $S$ of vertices in $G'$ which has a small external neighbourhood, which will give a contradiction with the local density condition of $G\supseteq G'$. If the second scenario happens too often, then the point is that each of the at least $4\cdot 10^4|X_2|$ leaves $v$ which can be attached to $X_1$ gives a bad tree, hence in the hypergraph $H$ we will have a hyperedge which ruins the path which contains the root and $v$. Using this fact we will be able to build an auxiliary graph $F\subseteq \cI$ which has too many edges and contains no sunflower cycle. Indeed, roughly speaking, for many vertices $v$ which can be attached to $X_1$ and give a bad tree, we will add at least three edges for every two vertices which we add to $F$ (analogously to Fig.~\ref{fig:step-4}). 

The technical details are slightly more delicate here than in the previous section, as in the second scenario we have to carefully pick which vertices and edges from $\cI$ are chosen to be added to the auxiliary subgraph $F\subseteq \cI$, which is supposed to give the required contradiction using the local density property of $\cI$. Let us also remark that the alternating sizes of the leaf sets are important to reach the mentioned contradictions regarding local density.

Let us now describe our algorithm in full detail. We perform the following procedure.

Let $v_1$ and $v_2$ be the endpoints of $P$. Let $S_1=\emptyset$, $S_2=\emptyset$ and $U=V(G')-V(P)$. Let $T_1$ be the tree consisting only of vertex $v_1$, and we let $X_1$ be the leaf set of $T_1$, so in the beginning let $X_1=\{v_1\}$.
Similarly $T_2=v_2$ and $X_2=\{v_2\}$. We always denote $T=T_1\cup P\cup T_2$.
We want to extend $T$, and we proceed in rounds, until $|T| \ge \alpha N/10^9$ or $|S_1| \ge n/L^2$ or $|S_2| \ge 100n$ occurs at the end of some round. We do so by alternately increasing $T_1$ and $T_2$, in each round making the leaf set of one of the trees by a factor of $10^4$ larger than that of the other tree. In particular, at every point of the algorithm we will maintain $\frac{|X_1|}{|X_2|}\in\{1,10^4, 10^{-4}\}$.
We further let $F$ be a subgraph of $\cI$, consisting only of isolated vertices $h(e)$ for each edge $e$ induced by $P$ in $G'$. The sets $S_1,S_2$ and the graph $F$ will serve a similar purpose as in the previous subsection. The difference is that now we do not have only one vertex which is in some sense bad and is then discarded, but a large part of a leaf set $X_1$ or $X_2$ (of one of the trees we consider) can be bad.

 In the algorithm given below, the neighbourhood $N_{B}(A)$ is defined as the neighbourhood of the vertex set $A$ in the vertex set $B$, in the graph $G'.$ The algorithm is carried out in rounds, as follows.

\noindent  \textbf{Start of round}
  
  W.l.o.g.\ assume that $|X_1|\leq |X_2|$ (otherwise rename them, and rename $T_1$ and $T_2$ analogously).
    \begin{enumerate}
        \item\label{lab:no expansion} If $|N_{U}(X_1)|\leq 5 \cdot 10^4|X_2|$, then remove $v_1$ from $P$, and add $V(T_1\cup T_2)-v_2$ to $S_1$. Denote by $u$ the vertex that was adjacent to $v_1$ in $P$, and let $X_1=\{u\}$ and let $T_1=u$. Further, let $X_2=\{v_2\}$ and $T_2=v_2$. Start the next round.
        \item\label{lab:expansion} Else, we have $|N_{U}(X_1)|> 5\cdot 10^4|X_2|$. To each vertex $v$ in $N_{U}(X_1)$ assign precisely one edge $vv'$ which connects it to the leaf set $X_1$ of the tree $T_1$. Let $v''$ be the ancestor of $v'$ in $T$. Let $B \subseteq N_U(X_1)$ be a set of $5 \cdot 10^4|X_2|$ of the found vertices $v$, with some arbitrary ordering $v_1,v_2,\ldots$
        \item\label{lab:deletions} Do the following for each $i = 1, \dots, 5 \cdot 10^4 |X_2|.$ If $v_i \not \in B$ (i.e., was already removed from $B$), continue to $i+1.$ Else, if $T+v_i'v_i$ is not good, let $h_i$ be the hyperedge which ruins it, and let $e_i$ be the edge in $T$ on a path which contains the root and $v_i'$, such that $h_i$ intersects $h(e_i)$ (note that by \Cref{obs:ruin} we have $e_i\neq v_i'v_i''$ and $e_i\neq v_iv_i'$). For each $j>i$, remove all vertices $v_j$ from $B$ which are contained in the two-element set $h^{-1}(h_i)$.
        \item\label{lab:add to T} If after the previous step there are at least $10^4|X_2|$ vertices $v$ in $B$ for which $T+vv'$ is good, then update $T_1$ by adding $10^4|X_2|$ such edges $vv'$, and let $X_1$ be the set of those $10^4|X_2|$ vertices which are leaves in the new tree $T_1$. Remove from $U$ all vertices $v$ which are added to $X_1$.
        
        \item\label{lab:bad tree} Otherwise, there is a set $B'$ of $10^4|X_2|$ vertices $v$ in $B$ for which $T+vv'$ is not good (for each such vertex we removed at most $2$ vertices in $B$, and at most $10^4|X_2|$ vertices in $B$ give a good tree, and we had $|B|\geq 5 \cdot 10^4|X_2|$). Proceed as follows.
        \begin{itemize}
            \item Add all vertices in $T-V(P)$ to $S_2$, and for each edge $e\in E(T) \setminus E(P)$ add\footnote{By property~\ref{i:not in U} below, it follows that each such $h(e)$ is not already in $V(F)$ since one of the endpoints of $e$ was in $U$ at the start of the round.} $h(e)$ to $V(F)$.
            \end{itemize}
            For each $v_i\in B'$, following the order inherited from $B$ do:  
            \begin{itemize}
            \item Move $v_i$ from $U$ to $S_2$, and add $h(v_iv_i')$ to $V(F)$ (later we argue why $h(v_iv_i')$ is not already there).
            \item Add $h_i$ to $V(F)$ (if it is
            not already there), and add both vertices in $h^{-1}(h_i)$ to $S_2$ (note that one or both of them could already be in $S_2$) and remove them from $U$ if they are there.
            \item Add the edges $\{h(v_i'v_i),h(v_i''v_i')\}$ and  $\{h(v_i'v_i),h_i\}$ to $F$, together with the edge $\{h_i,h(e_i)\}$ if $h_i$ was just added to $V(F)$.
        \end{itemize}

    \end{enumerate}
  \textbf{End of round}

\begin{claim}\label{cl:properties} 
The following hold at the end of each round of the above algorithm:
\begin{enumerate}[label= (\Alph*)]
\item\label{i:goodness} $T$ is a good tree in $G$.
\item\label{i:not in U} When a hyperedge $h(xy)$ is added to $V(F)$ then both $x$ and $y$ are not in $U$ from that point until the end of the algorithm.
\item\label{i:bad expansion} There is a subset $S\subseteq S_1$ of size $|S|\geq |S_1|/10^5$ with $|N_U(S)|\leq 5\cdot 10^8|S|$
\item\label{i:size of F} At the end of each round, $F$ contains at least $|S_2|/4$ vertices and at most $n+2|S_2|$ vertices.
\item\label{i:density} At each point of the algorithm $F$ contains at least $1.45(v(F)-n)$ edges and no sunflower cycle.
\item\label{i:length} The size of $P$ is always at least $|P|\geq \frac{2}{L+1}n$.
\end{enumerate}
 \end{claim}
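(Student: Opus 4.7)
The plan is to verify the six invariants in order by induction on the number of completed rounds, exploiting the structural fact that only Steps \ref{lab:no expansion}, \ref{lab:add to T}, and \ref{lab:bad tree} modify the sets involved. Invariants \ref{i:goodness} and \ref{i:not in U} are essentially immediate from the algorithm's design: in Step \ref{lab:add to T} we only attach edges $vv'$ for which $T + vv'$ is still good, while in Steps \ref{lab:no expansion} and \ref{lab:bad tree} both trees are reset to singletons, so $T$ collapses back to the already-good path $P$. For \ref{i:not in U}, every hyperedge $h(xy)$ added to $V(F)$ in Step \ref{lab:bad tree} is accompanied by $x$ and $y$ being placed in (or already lying in) $V(P) \cup S_2$, and $U$ only shrinks. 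For \ref{i:length}, Step \ref{lab:no expansion} removes exactly one vertex from $P$ while adding at least $v_1$ to $S_1$, so Step \ref{lab:no expansion} fires at most $|S_1| \leq n/L^2$ times, yielding $|P| \geq 2n/L - n/L^2 \geq 2n/(L+1)$.

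The substantive invariant is \ref{i:bad expansion}. The geometric growth rule in Step \ref{lab:add to T} — each successful expansion multiplies the current leaf set by a factor of $10^4$ — means that the current leaves $X_1$ always satisfy $|X_1| \geq \tfrac{1}{2}|T_1|$ and also $|X_1| \geq |X_2|/10^4$, hence $|X_1| \geq |V(T_1 \cup T_2) - v_2|/(4 \cdot 10^4)$. The triggering condition on Step \ref{lab:no expansion} guarantees $|N_U(X_1)| \leq 5 \cdot 10^4 |X_2| \leq 5 \cdot 10^8 |X_1|$. Taking $S$ to be the union over all Step \ref{lab:no expansion} triggers of the corresponding $X_1$'s produces a subset of $S_1$ with $|S| \geq |S_1|/(4 \cdot 10^4) \geq |S_1|/10^5$ and $|N_U(S)| \leq 5 \cdot 10^8 |S|$.

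Invariants \ref{i:size of F} and \ref{i:density} reduce to a per-round bookkeeping argument for Step \ref{lab:bad tree}. In each such round we first add $|T - V(P)| \leq 4\max(|X_1|,|X_2|)$ isolated vertices both to $V(F)$ and to $S_2$, and then for every $v_i \in B'$ (with $|B'| \geq 10^4|X_2|$) we add $1$ or $2$ vertices and $2$ or $3$ edges to $F$, along with at most $3$ vertices to $S_2$. Because $|B'|$ dominates $|T - V(P)|$ by a factor of at least $10^3$, the edge-to-vertex ratio averaged across the round exceeds $1.45$ even in the worst case (where every $h_i$ is fresh, so we add $2$ vertices for $3$ edges), giving $e(F) \geq 1.45(v(F) - n)$ after accounting for the $|E(P)| \leq n$ isolated vertices from initialization. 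The bounds $|S_2|/4 \leq v(F) \leq n + 2|S_2|$ follow from the parallel count of contributions to $S_2$: per $v_i$ the ratio of vertices added to $V(F)$ versus to $S_2$ lies between $1{:}3$ and $2{:}1$, and the initial block contributes equally to both.

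The main obstacles will be the sunflower-freeness of $F$ and the sharpness of the constant $1.45$. For sunflower-freeness, one checks, as in the analogous argument of the previous section, that each newly added edge carries a label distinct from its neighbouring $F$-edges: the label $v_i'$ on $\{h(v_i'v_i), h(v_i''v_i')\}$ cannot coincide with the label on $\{h(v_i'v_i), h_i\}$, since otherwise $h_i$ would intersect both $h(v_i''v_i')$ and $h(e_i)$, contradicting that $T$ is good by \ref{i:goodness} combined with Observation~\ref{obs:ruin}; a similar argument handles the label on $\{h_i, h(e_i)\}$ when $h_i$ is freshly added. The constant $1.45$ then emerges from optimising over the case-mix of new-versus-old $h_i$, and this is precisely why the expansion factor is calibrated to $10^4$ — large enough that the initial isolated-vertex slack from $|T-V(P)|$ becomes negligible compared to $|B'|$.
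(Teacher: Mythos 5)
Your proposal reconstructs the paper's proof essentially item by item: \ref{i:goodness} and \ref{i:not in U} by direct inspection of which steps modify $T$, $U$ and $V(F)$; \ref{i:length} by counting Step~\ref{lab:no expansion} firings against the $|S_1|$ termination threshold; \ref{i:bad expansion} by observing that the leaf set of the smaller tree dominates $|T_1 \cup T_2|$ up to a factor $\approx 10^5$ and taking $S$ to be the union of the discarded leaf sets; \ref{i:size of F} by the per-$v_i$ bookkeeping of $\{1,2\}$ vertices into $V(F)$ versus $\{1,2,3\}$ into $S_2$, with the path contributing $n$; and \ref{i:density} by the two cases (one vertex / two edges or two vertices / three edges), absorbing the isolated vertices from $T-V(P)$ against $|B'| \geq 100\,|T_1\cup T_2|$, with sunflower-freeness via the same label-comparison argument the paper gives. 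The only slight misphrasing is your remark that ``the constant $1.45$ emerges from optimising over the case-mix of new-versus-old $h_i$'': the two cases both give edge-to-vertex ratio at least $3/2$, and the entire slack down to $1.45$ comes from the $|T-V(P)|$ isolated vertices — which you do in fact note in the next clause. This is the same approach as the paper.
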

 \begin{proof}
    In the beginning we have $|X_1| = |X_2| = |T_1| = |T_2| = 1,$ whereas afterwards we have $|X_2| = 10^4 |X_1|$ and $|T_1 \cup T_2| < \sum_{i=0}^\infty 10^{-4i} |X_2| < 1.1 |X_2|.$ Hence, it always holds that $|X_2| \ge \frac{1}{2}|T_1\cup T_2|$ and $|X_1|\geq |T_1\cup T_2|/10^5$.
    We proceed with proving the individual claims.
     \begin{itemize}
     \item For \ref{i:goodness}, observe that we only possibly add edges to $T$ in Step~\ref{lab:add to T} if $T$ stays good. Indeed, by definition of a good tree, if $T + vv'$ is good for each vertex $v$ that is added, than for the new tree $T$ obtained by adding all of these vertices, $T$ is also good. To see \ref{i:not in U}, note that vertices are added to $V(F)$ only at the beginning of the algorithm and in Step~\ref{lab:bad tree}. In both cases, once a hyperedge $h(xy)$ is added to $V(F),$ both $x$ and $y$ end up in $P \cup S_1 \cup S_2$ and stay in this set throughout. For \ref{i:bad expansion}, notice that vertices are added to $S_1$ only in Step \ref{lab:no expansion} when the neighbourhood of $X_1$ in $U$ is at most $5\cdot 10^4|X_2|\leq 5\cdot 10^8|X_1|$, and then we add to $S_1$ all vertices in $T_1\cup T_2-v_2$. Since $|X_1|\geq |T_1\cup T_2-v_2|/10^5$ and throughout the algorithm $U$ only shrinks, the claim holds by taking $S$ to be the union of the sets $X_1$ which are added to $S_1$ during the algorithm.
     \item  As in the proof in the previous subsection, for properties \ref{i:size of F} and \ref{i:density}, we first prove that every hyperedge $h(v_i'v_i)$ from Step~\ref{lab:bad tree}, added in any round, was not contained in $V(F)$ already. First, it was not added to $V(F)$ in the very beginning, since then $v_i'v_i$ would need to be part of the initial path, but since $v_i'$ was in $U$ when starting the observed round, this is a contradiction.
     
     Furthermore, $h(v_iv_i')$ was not added to $V(F)$ in a previous round, since in this case both $v_i$ and $v_i'$ would have been added to $S_2$, which gives the same contradiction. Finally, we show that $h(v_iv_i')$ could not have been added to $V(F)$ in an earlier stage of Step~\ref{lab:bad tree} of the same round. Indeed, $v_iv_i'$ is certainly not equal to some $v_jv_j'$ with $j<i$, and also $h_j\neq h(v_iv_i')$ since in Step~\ref{lab:deletions} we deleted vertices in $h^{-1}(h_j)$ from $B$ which come later in the ordering.
    
    \item To show \ref{i:size of F}, recall that we add vertices to $V(F)$ in one of the two following cases. First, we add at most $n$ hyperedges/vertices to $V(F)$ at the beginning of the algorithm, one for each edge on the initial path $P$.
    
    Second, in Step \ref{lab:bad tree}, to $V(F)$ we added at least $|B'|=10^4|X_2|$ vertices. On the other hand, we added to $S_2$ at most $3$ vertices for each of those $|B'|$ vertices (one for each $h(v_i'v_i)$ and at most two for $h_i$), and at most one for each edge in $T_1\cup T_2$ where $|T_1\cup T_2| \leq 2|X_2|<|B'|/100$. This implies that to $V(F)$ we added at least $|S_2|/4$ vertices, which gives the required lower bound. For the upper bound, note that every time we add $h(v_iv_i')$ (and possibly $h_i$) to $V(F)$, we add $v_i$ to $S_2.$ Furthermore, when for an edge $e\in E(T) \setminus E(P)$ the hyperedge $h(e)$ is added to $V(F)$, exactly one vertex (contained in $e$) is added to $S_2$. This gives the desired upper bound, where the term $n$ bounds the number of vertices added to $V(F)$ initially.
    
    \item 
    For \ref{i:density} first note that the number of vertices added to $V(F)$ at Step~(\ref{lab:bad tree}) of the algorithm is at least $v(F)-n$, as $n$ bounds the number of vertices added to $V(F)$ in the beginning of the algorithm. 
    Now, for each $v_i\in B'$, we either add just $h(v_i'v_i)$ to $V(F)$ and the two edges $\{h(v_i'v_i),h(v_i''v_i')\}$ to $F$, or we add both $h(v_i'v_i)$ and $h_i$ to $V(F)$, but then we add all three edges $\{h(v_i'v_i),h(v_i''v_i')\}$,$\{h(v_i'v_i),h_i\}$ and $\{h_i,h(e_i)\}$ to $F$. This means that for every two vertices added to $V(F)$, we add at least three edges to $F$, and in the observed case we added at least $|B'|$ vertices to $V(F)$. The total number of other vertices which are added to $V(F)$ in Step~\ref{lab:bad tree} is $|E(T) \setminus E(P)| \le |T_1\cup T_2|$ which (as we already explained in (D)) is at most $|B'|/100$.
    Hence, if we added $t$ vertices to $V(F)$ in a round, we added at least $\frac{3}{2}(t-t/100)\geq 1.45t$ edges to $F$, which completes the proof of the first part of \ref{i:density}.
    
    \item Now, it is left to show that there are no sunflower cycles in $F$. Look at $F$ at an arbitrary point of the algorithm, and assume that it has no sunflower cycle; we now show that after an additional round no sunflower cycle is created. Trivially, in the beginning $F$ consists only of isolated vertices, hence it has no sunflower cycles.
    
    Now we consider Step~\ref{lab:bad tree}, which is the only step where edges are added to $F$.
    All we need to show is that the edges in $\cI$ touching the vertex $h(v_iv_i')$ which are added to $F$ have different labels, as well as that the two edges touching $h_i$ have different labels if $h_i$ is also added to $V(F)$ in this step, hence no sunflower cycle can be closed by these new edges, which would complete the proof. 
    
    First we consider $h(v_iv_i')$. The label of $(h(v_iv_i'),h(v_i'v_i''))$ is $v_i'$, so if $(h(v_iv_i'),h_i)$ also had the label $v_i'$, then $h_i$ also contains $v_i'$ and hence it intersects $h(v_i'v_i'')$. Now, $h_i$ also intersects $h(e)$ for some edge $e_i$ on the path containing the root and $v_i'$, and by Observation~\ref{obs:ruin} we know that $e\neq v_i'v_i''$, so $h_i$ intersects that path in at least two vertices, a contradiction with the goodness of the tree $T$.
    
    Now, let us argue that if $h_i$ is added to $V(F)$, then the two edges added with it have different labels. Let the label of $(h(v_iv_i',h_i)$ be $x$. First, note that $x\neq v_i'$ by the same argument as in the previous passage. Now, if the label of the edge $(h_i,h(e_i))$ is also $x$, then the hyperedges $h(e_i)$ and $h(v_iv_i')$ intersect in $x$, which leads to a contradiction as the only hyperedge on the path from $v_i'$ to the root which $h(v_iv_i')$ intersects is $h(v_i'v_i'')$, and they only share vertex $v_i'$.
    
    \item For \ref{i:length}, note that each time we remove a vertex from $P$, at least one vertex is added to $S_1$.
    %or at least 10000 vertices are added to $S_2$.
    Then, if at some point $P$ has less than $\frac{2n}{L+1}$ vertices, then $|S_1|\geq \frac{2n}{L}-\frac{2n}{L+1}>\frac{n}{L^2}$,
    hence our algorithm would have stopped earlier. 
    \vspace{-0.9cm}
 \end{itemize}
 
 \end{proof}

\begin{lemma}\label{lem:DFS tree}
The algorithm above terminates when $|T|\geq \alpha N/10^9$, giving a good tree $T$ in $G'$ consisting of a path $P$ of length at least $\frac{2n}{L+1}$, and two trees $T_1$ and $T_2$ attached to its endpoints, and of depth at most $\log N$. Furthermore, $|T| \le \alpha N / 10^5$ and $|T_1|,|T_2|\geq \alpha N/10^{15}$.
\end{lemma}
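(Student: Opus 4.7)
The plan is to show the algorithm must terminate via the $|T| \ge \alpha N/10^9$ branch by ruling out the other two termination conditions, and then to derive the remaining structural bounds from the algorithm's mechanics. Both ruling-out arguments combine invariants from Claim~\ref{cl:properties} with the local-density properties guaranteed by Lemma~\ref{lem:hypergraph}.

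For the $|S_2| \ge 100n$ case, I would apply \ref{i:size of F} and \ref{i:density} to obtain $F \subseteq \cI$ with no sunflower cycle, $v(F) \ge |S_2|/4 \ge 25n$, $v(F) \le n + 2|S_2| \le 201n < \alpha N$, and $e(F) \ge 1.45(v(F)-n) \ge 1.45 \cdot (24/25)\,v(F) > (4/3)\,v(F)$, directly contradicting \ref{no-dense-set}.

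For the $|S_1| \ge n/L^2$ case, I would use \ref{i:bad expansion} to obtain $S \subseteq S_1$ with $|S| \ge |S_1|/10^5$ and $|N_U(S)| \le 5\cdot 10^8 |S|$. Since $\delta(G') \ge 10^{15}L^2$, the $G'$-degree sum of $S$ is at least $10^{15}L^2 |S|$. Applying local edge density of $G$ to $S$ and to $S \cup N_U(S)$ (both of size much smaller than $\alpha N$) gives $2e(G'[S]) \le 4|S|$ and $e_{G'}(S,U) \le 2(|S|+|N_U(S)|) \le 2 \cdot 6\cdot 10^8 |S|$, leaving at least $0.99 \cdot 10^{15}L^2 |S|$ edges from $S$ into $V\setminus U\setminus S$; these all lie in $G'[V\setminus U]$ and hence number at most $2|V\setminus U|$. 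Since Step~\ref{lab:no expansion} fires in the terminating round, $|V\setminus U|_{\text{end}} \le |P| + |S_1| + |S_2| \le 2n/L + (n/L^2 + T^*) + 100n$, where $T^* := |T_1 \cup T_2|$ at the start of that round is bounded by $\alpha N/10^9$. Moreover, because the last leaf set $X_1^{(r)}$ of that round is part of the set witnessing \ref{i:bad expansion}, $|S| \ge \max\bigl(n/(10^5 L^2),\, T^*/10^5\bigr)$. A short case analysis on whether $T^* \le n/L^2$ or $T^* > n/L^2$, substituting the appropriate lower bound on $|S|$, makes the chain $0.99\cdot 10^{15}L^2 |S| \le 2|V\setminus U|$ inconsistent in either regime; in the non-induced case, where $L$ can be as large as $2^k+1$, one uses the sharper bound $\delta(G') \ge 10^{17} 2^{2k}$ to close the argument.

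The remaining claims follow directly from the algorithm. The upper bound $|T| \le \alpha N/10^5$ holds because $|T|_{\text{start of last round}} < \alpha N/10^9$ and Step~\ref{lab:add to T} adds at most $10^4 |X_2| \le 10^4 |T|_{\text{start}}$ vertices. The bound $|P| \ge 2n/(L+1)$ is invariant \ref{i:length}. The depth is at most $O(\log N / \log 10^4) \le \log N$, since each firing of Step~\ref{lab:add to T} adds one level to one of the trees while multiplying the total leaf-set size by a factor of at least $10^4$. Finally, the invariant $|X_1|/|X_2| \in \{1, 10^4, 10^{-4}\}$, combined with each tree's size being dominated (up to a constant factor) by its leaf set, yields $|T_1|, |T_2| \ge \alpha N/10^{15}$ from $|T| \ge \alpha N/10^9$. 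The main technical difficulty will be the $|S_1|$ case: the local-density contradiction requires a careful interplay between the minimum-degree bound, the neighborhood-growth factor $5\cdot 10^8$, and the two-regime bound on $|V\setminus U|$, and must remain valid in the non-induced case where $L$ is exponential in $k$.
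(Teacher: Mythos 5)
Your proof follows essentially the same approach as the paper: rule out the $|S_1|$ and $|S_2|$ termination branches via local density, then read off the structural bounds from the algorithm's mechanics. The $|S_2|$ case is identical in substance (one small slip: writing $v(F)\le n+2|S_2|\le 201n$ presumes $|S_2|\le 100n$, but Step~\ref{lab:bad tree} in the terminating round can push $|S_2|$ well past $100n$ by up to $O(|T|)$; this still leaves $v(F)<\alpha N$, which is what matters, and the paper accounts for it explicitly). The $|S_1|$ case is where you diverge slightly in bookkeeping: you split the degree-sum of $S$ into edges within $S$, edges into $U$ (bounded by applying \ref{no-dense-aux-graph-set} to $S\cup N_U(S)$), and edges into $V\setminus U\setminus S$ (bounded by applying \ref{no-dense-aux-graph-set} to $V\setminus U$), and then close with a case analysis on whether $T^*\lessgtr n/L^2$. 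The paper instead applies local density once, to the single set $P\cup S_1\cup S_2\cup N_U(S)$, and compares its size directly against the edges issuing from $S$, measuring everything in multiples of $|S_1|$ so that no case split is needed. Both routes work and lean on the same facts ($\delta(G')\ge 10^{15}L^2$, \ref{i:bad expansion}, \ref{no-dense-aux-graph-set}); yours is a touch more explicit about using $|S|$ rather than $|S_1|$ in the degree count, and the two-case split is a natural price for your additive bound $|V\setminus U|\le T^*+O(n)$, which the paper avoids by absorbing $T^*$ into the last-round increase of $|S_1|$. Your final remark that the non-induced case needs the sharper $\delta(G')\ge 10^{17}2^{2k}$ is unnecessary: as your own computations show, the factor $L^2$ cancels on both sides, so the uniform bound $10^{15}L^2$ suffices, which is precisely why the paper phrases it that way. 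The derivations of $|P|\ge 2n/(L+1)$, the depth bound, $|T|\le\alpha N/10^5$, and $|T_1|,|T_2|\ge\alpha N/10^{15}$ all match the paper.
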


\begin{proof}
Recall that  $\alpha=10^{-6}C^{-3}s^{-8}$ and $N = 10^{100}k^2C^6s^{14}n$, while $L_1=6$, $L_2=5$ and $L_3\leq 2^k+1$, and $L$ is equal to one of those three depending on which statement we prove (see definition of host graph in the end of Section~\ref{sec:host}). Also recall that $\delta(G')\geq 10^{15}L^2$, as mentioned in the end of Section~\ref{sec:setup}.
    Suppose that the algorithm terminates when $|S_1|\geq n/L^2 < \alpha N / 10^{20}$ (which implies that at that point $|S_1|\leq n/L^2+\alpha N/10^9$ because right before terminating, $|S_1|$ increased by at most $|T|\leq \alpha N/10^9$). We also have $|S_2| < 100n$. Since $S_1$ was increased right before terminating in Step~\ref{lab:no expansion},
    at the end of the observed round, $P,S_1,S_2$ and $U$ cover the vertices of $G'$.
    By \ref{i:bad expansion} there is a set $S\subset S_1$ with $|S|\geq |S_1|/10^5$ with $|N_U(S)|\leq 5\cdot10^8|S|\leq 5\cdot 10^{13}|S_1|$. Since $|P|+|S_2|< 101n \le 101L^2 |S_1|$, we get that $G'[P\cup S_1\cup S_2\cup N_U(S)]$ is a subgraph of $G'$ on at most $10^{14} L^2 |S_1| \leq \alpha N$ vertices, spanning at least $\delta(G') |S_1| / 2 > 2\cdot 10^{14} L^2 |S_1|$ edges, a contradiction with the local density property of $G\supset G'$ \ref{no-dense-aux-graph-set}.
    
    Next, suppose that the algorithm terminates when $|S_2|\geq  100n$. But then, by \ref{i:size of F} and \ref{i:density}, $F$ has at least $25n$ vertices, and at least $1.45(v(F)-n)> \frac{4}{3}v(F)$ edges, which is again a contradiction with the local density condition of $\cI\supset F$ stated as \ref{no-dense-set}, because $v(F)\leq 100n+\alpha N/2<\alpha N$ (in the last step before terminating the number of vertices which are added to $S_2$ is at most $10^8 |T_1\cup T_2|<\alpha N/2$).
    
    Hence, the algorithm terminates when $|T|\geq \alpha N/10^9$. By construction, $T_1$ and $T_2$ are of depth at most $\log_{10^4} (\alpha N) < \log N$.
    Additionally, we have that the bound on the ratio of the sizes of the trees $T_1$ and $T_2$ is $\max\{\frac{|T_1|}{|T_2|},\frac{|T_2|}{|T_1|}\}\leq 2 \cdot 10^4$, which gives the lower bound on the sizes of the trees, since $T=T_1+P+T_2$ and $|P|\leq n$. Since $|T|$ increases in each round by at most a factor of $10^4,$ $|T| \le \alpha N / 10^5$ holds. Finally, by \ref{i:length}, $P$ is of length at least $\frac{2n}{L+1}$, which finishes the proof.
\end{proof}
Let $R_1$ be the set of vertices in $X$ which are added to $T_1$ in the last $2\log_{10^4} (C^6s^6)$ rounds of the algorithm. Recall that each $T_i$ grows by factor roughly $10^4$ every other round. It follows that $|V(T_1)-R_1|\leq \frac{2|T_1|}{C^6s^6}\leq 10^{90}k^2n/C^3\leq n/2$, where we used that $\alpha=10^{-6}C^{-3}s^{-8}$, $N = 10^{100}k^2C^6s^{14}n$, $C\geq 10^{20}k$ and $k$ is large enough.
We define $R_2$ analogously for the tree $T_2$, and again we have $|V(T_2)-R_2|\leq n/2$.

\subsection{Enlarging the trees and finishing the proof}\label{sec:enlarging}
In this section we complete the proofs of Theorems~\ref{thm:main} and \ref{thm:non-induced}. Recall that the tree $T$ from \Cref{lem:DFS tree} lives inside $G'$, and $G'$ is a subgraph of a $\gamma$-expanding graph $G_{red}'$, with $\gamma=\frac{1}{k^2Cs}$. Recall again that  $\alpha=10^{-6}C^{-3}s^{-8}$ and $N = 10^{100}k^2C^6s^{14}n$.

For a subset $S\subseteq V(\mathcal I)$, denote by $N^\cI(S)$ all the vertices in the graph $G$ which are contained in a hyperedge $h$ which has distance at most 2 to $S$ in $\mathcal{I}$.  Note that for every $S\subseteq \mathcal I$, in the graph $G$ we always have $|N^\cI(S)|\leq 2(8Cs)^2s^3 |S|$, since the maximum degree of $H$ is at most $8Cs$ and the size of each hyperedge is $s$. Let $S$ be the set of hyperedges $h(e)$ where $e\in T-R_1-R_2$. Then $|S|= |V(T_1)-R_1|+|V(T_2)-R_2|+|P| \leq n/2+n/2+n=2n$.

We now use the expanding properties of $\Gr'$ to find a short path between the sets $R_1$ and $R_2$ and thus we close a cycle together with a subpath of $T.$ Crucially, we make sure that this path avoids $N^\calI(S)$ which, together with the fact that $H$ has large girth, guarantees that the cycle we find is good. The formal proof follows.

Let $X_t \coloneqq R_t$ for $t \in \{1, 2\}.$ Since $G_{red}'$ is a $\gamma$-expander we can do the following.
 For each $t \in \{1, 2\},$ repeat the following as long as $|X_t| \le |V(G_{red}')|/2$. Let
\[
X_t:=X_t\cup N_{G_{red}'}(X_t)\setminus N^{\cI}(S).
\]
Since at each step $X_t$ increases by at least $\gamma|X_t|-2(8Cs)^2s^3|S|\geq \gamma \alpha N/10^{15}-256C^2s^5n\geq \gamma \alpha N/10^{16}$, after at most $\frac{10^{16}}{\gamma\alpha}$ steps, both $X_1$ and $X_2$ contain more than half of the vertices of $G_{red}'$. Thus, they intersect at some point of this procedure. Let $x$ be one of the vertices in $X_1\cap X_2$, at the time of the procedure when we first have $X_1\cap X_2\neq \emptyset$. Hence, we obtain a cycle $Q$ in $G_{red}'$ which passes through $T_1$ and $T_2$ (and its root), and contains vertex $x$, and has length between $\frac{2n}{L+1}\leq|P|$ and $\frac{2n}{L}+2\log N+2\frac{10^{16}}{\gamma\alpha}\leq \frac{2n}{L-1}$, where in the upper bound the first term is a bound on the length of $P$, the second bounds the sum of the depths of $T_1$ and $T_2$, and the third is the number of steps needed to reach $x$ from $R_1$ and $R_2$. This in turn will give rise to a red (induced) cycle of length $n$ in the host graph $\Gamma$. To prove this, we use Lemma~\ref{lem:cycle in host graph}, so we first show the existence of a collection of good paths contained in $Q$, so that every pair of edges in $Q$ is contained in one such path.

Let $P_1$ be the path $Q\cap (X_1\cup T\setminus R_2)$, and let us argue that this path is a good path. Since $T$ is a good tree, $Q\cap T$ is a good path. Furthermore, since the path $Q\cap X_1$ is of length at most $\frac{10^{16}}{\gamma\alpha}+\log_{10^4}(C^6s^6)<g$ where $g=(Cs)^{20}$ is the girth of $H$, by \Cref{lem:short is good} it is also a good path. To finish, we observe that any pair of edges $e,e'$ where $e$ has a vertex in $X_1-R_1$, and $e'$ has a vertex in $T-R_2$ is such that $e$ and $e'$ are at distance more than $2$ in $\mathcal I$. Indeed, by construction vertices in $X_1-R_1$ are outside of $N^{\mathcal I}(S)$ and so $h(e)$ and $h(e')$ do not intersect and there is no hyperedge in $H$ intersecting both $h(e)$ and $h(e').$

The path $P_2$ defined as $Q\cap (X_2\cup T\setminus R_1)$ is analogously good. Furthermore the path $P_3= Q\cap(X_1\cup X_2)$ is good by Lemma~\ref{lem:short is good}, as we only performed at most $\frac{10^{16}}{\gamma\alpha}$ steps to reach the final $X_1$ and $X_2$, so $Q\cap (X_1\cup X_2)$ is of length at most $2\frac{10^{16}}{\gamma\alpha} + 2 \log_{10^4}(C^6s^6) < g$.

Note that $P_1,P_2$ and $P_3$ cover every pair of edges in $Q$, so $Q$ is a good cycle. This completes the proof of \Cref{thm:main} and \Cref{thm:non-induced} by Lemma~\ref{lem:cycle in host graph}, recalling that $Q$ is of length between $\frac{2n}{L+1}$ and $\frac{2n}{L-1}$.

\section{Concluding remarks}
In this paper we have shown that the induced size-Ramsey number of even cycles satisfies $\hat r^k_{ind}(C_n)\leq O(k^{102})n$, while for odd cycles we have $\hat r^k_{ind}(C_n)\leq e^{O(k\log k)}n$. Both of the results improve upon the previous best known bound \cite{haxell1995induced} which had a tower type dependence on $k$. 
Using the same method, we prove essentially tight bounds for the size-Ramsey number of odd cycles, namely $\hat r^k(C_n)\leq e^{O(k)}$. Being more careful with the argument, we could derive a more precise bound also in the even case for (non-induced) size-Ramsey numbers, though we certainly cannot come very close to the best known lower bound, which is of order $k^2n$ \cite{krivelevich2019expanders,dudek2017some}. 

As an interesting open problem, we conjecture that an exponential bound in $k$ is also sufficient in the induced case.

\begin{conjecture}
$\hat r^k_{ind}(C_n)\leq e^{O(k)}n$ for odd $n$.
\end{conjecture}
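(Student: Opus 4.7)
The whole host-graph framework of the paper is gadget-agnostic: the random linear hypergraph $H$ from Lemma~\ref{lem:hypergraph}, the insertion of copies of a gadget $F$ inside the hyperedges, the auxiliary graph $G$, the DFS-based construction of a long good path, the tree-growth argument and the expander-based closing in Section~\ref{sec:enlarging} only use that $F$ is $k$-induced-Ramsey for some fixed odd cycle $C_L$, and they produce $\Gamma$ with $\poly(s,k)\cdot n$ edges, where $s=v(F)$. So to prove the conjecture it suffices to upgrade Lemma~\ref{lem:gadget odd} from $e^{O(k\log k)}$ to $e^{O(k)}$ vertices: construct a graph $F$ with $v(F)=e^{O(k)}$ such that every $k$-edge-colouring of $F$ contains an induced monochromatic odd cycle of constant length. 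Plugging such an $F$ into the construction of $\Gamma_2$ from Section~\ref{sec:host} and repeating the rest of the proof of Theorem~\ref{thm:main}~\ref{induced-odd} verbatim immediately yields $\hat r^k_{ind}(C_n)\leq e^{O(k)}n$.

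\textbf{Approach to the gadget.} The current proof of Lemma~\ref{lem:gadget odd} works with Alon's pseudorandom triangle-free $(n,d,\lambda)$-graph and peels colours one at a time, losing a multiplicative factor $20k$ in the active set $U$ per step; combined with $\lambda/d=\Theta(n^{-1/3})$, this forces $n\ge(20k)^{\Theta(k)}$, i.e.\ $n=e^{\Theta(k\log k)}$. Two natural routes suggest themselves for removing the extra $\log k$. The first is to replace the single-vertex neighbourhood step in Claim~\ref{cl:5-cyc} by a dependent-random-choice-type argument: instead of choosing the densest colour and a single vertex (paying a factor $k$ from the colour pigeonhole), one locates a large set $W\subseteq U$ whose codegrees in the colour classes are controlled in the $(n,d,\lambda)$ sense, and restricts to $W$; if the per-iteration loss can be reduced to an absolute constant, the $k$-step recursion yields $n=e^{O(k)}$. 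The second route is to use a non-bipartite pseudorandom graph of odd girth exactly $2t+1$ for some (possibly slowly growing) $t$, with density close to the Moore bound: every monochromatic $(2t+1)$-cycle is then automatically induced, and a single application of the Expander Mixing Lemma to the densest colour class forces such a cycle, completely eliminating the colour-recursion.

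\textbf{Main obstacle.} Both routes push against known barriers, and this is where the real technical content has to be. Any bound of the form $e^{O(k)}$ must still absorb the $2^{\Omega(k)}$ loss forced by $\hat r^k_{ind}(C_n)\ge r^k(C_n)\ge 2^{\Omega(k)}$, so a dependent-random-choice scheme has to distribute this loss across iterations without re-introducing a factor $\log k$; this appears to require a genuinely new twist beyond the standard density-increment toolbox. The high-odd-girth route is blocked by the fact that all known dense pseudorandom constructions of odd girth at least $7$ (incidence graphs of generalised polygons, norm graphs, \ldots) are bipartite, hence contain no odd cycles at all, and producing a non-bipartite analogue of Alon's graph with odd girth larger than $5$ is a long-standing open problem in pseudorandomness. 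In my view, circumventing one of these two obstacles is the only hard step in proving the conjecture; once the improved gadget is in place, the linear-hypergraph host, the good-cycle reduction in Lemma~\ref{lem:cycle in host graph} and the DFS plus expansion arguments of Sections~5.2--\ref{sec:enlarging} deliver the bound with no further work.
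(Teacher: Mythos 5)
The statement you were asked to prove is Conjecture~6.1 of the paper, and the paper does not contain a proof of it --- it is presented as an open problem. There is therefore no ``paper proof'' to compare against. What the paper does do, in the concluding remarks immediately after the conjecture, is observe exactly the reduction you describe: if one had a gadget graph $F$ with $e^{O(k)}$ edges such that every $k$-edge-colouring of $F$ contains an induced monochromatic odd cycle of length at least $5$ (this is precisely the paper's Conjecture~\ref{conj:induced-odd-cycle}), then the host-graph construction of Section~\ref{sec:host}, the good-cycle reduction of Lemma~\ref{lem:cycle in host graph}, and the DFS and expansion arguments of Sections~5.2--\ref{sec:enlarging} go through unchanged and yield $\hat r^k_{ind}(C_n)\le e^{O(k)}n$. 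Your first paragraph reproduces this reduction faithfully (and your accounting of the $\poly(s,k)\cdot n$ edge count is consistent with the paper's parameter choices, including the extra $e^{O(k)}$ loss one would incur by pigeonholing on the cycle length as in the non-induced case).

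The genuine gap is that you do not prove the gadget conjecture, and you say so yourself. Your two routes --- a dependent-random-choice variant of Claim~\ref{cl:5-cyc} to shave the $\log k$ in the exponent, and a non-bipartite pseudorandom graph of large odd girth so that a single application of the Expander Mixing Lemma suffices --- are both plausible directions, and your diagnosis of the obstacles (the $2^{\Omega(k)}$ lower bound that must be absorbed; the bipartiteness of all known dense constructions of odd girth $\ge 7$) is accurate. But neither route is carried to completion, so what you have written is a reduction to an open problem plus a survey of why the open problem is hard, not a proof. That is the correct state of affairs: the conjecture remains open, and the honest conclusion of your write-up --- that everything hinges on Conjecture~\ref{conj:induced-odd-cycle} --- coincides with what the authors themselves say.
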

In order to show this, it would be sufficient to prove that there is a graph with $e^{O(k)}$ edges which contains an induced monochromatic odd cycle of length at least $5$ in any $k$-coloring of its edges. Indeed, then we could use this graph as the gadget graph, and get the desired bound with the same proof.
We conjecture that such a gadget graph exists.

\begin{conjecture} \label{conj:induced-odd-cycle}
For every integer $k$, there is a graph $G$ with $e^{O(k)}$ edges which, for any $k$-coloring of its edges, contains a monochromatic odd cycle of length at least 5 as an induced subgraph.
\end{conjecture}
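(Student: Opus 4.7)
The plan is to find a gadget graph $F=F(k)$ with $e^{O(k)}$ edges such that every $k$-edge-coloring of $F$ contains an induced monochromatic odd cycle of length at least $5$. As noted in the paragraph following the conjecture, such a gadget plugged into the host graph construction of Section~\ref{sec:host} and analysed as in the proof of Theorem~\ref{thm:main}~\ref{induced-odd} immediately yields $\hat r_{ind}^k(C_n)\leq e^{O(k)}n$ for odd $n$.

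The natural candidate is Alon's triangle-free pseudorandom $(n,d,\lambda)$-graph (Theorem~\ref{thm:alon-graph}) with $n=2^{\Theta(k)}$, so that $d=\Theta(n^{2/3})$, $\lambda=\Theta(n^{1/3})$, and $e(F)=\Theta(n^{5/3})=2^{O(k)}$. Since $F$ is triangle-free, every $5$-cycle in $F$ is automatically induced, so it would suffice to prove that any $k$-edge-coloring of $F$ contains a monochromatic $C_5$. By pigeonhole the densest color class $R$ has at least $\Omega(n^{5/3}/k)$ edges and is triangle-free. The key one-shot lemma I would aim to prove is: every triangle-free subgraph $R$ of $F$ with $\Omega(n^{5/3}/k)$ edges contains a $5$-cycle.

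To attack this lemma, the idea is to use the pseudorandomness of $F$ globally rather than iteratively. Pick a vertex $v$ of high red-degree and consider $A=N_R(v)$ and the second neighborhood $B=N_R(A)\setminus(A\cup\{v\})$; a red $C_5$ through $v$ corresponds to a red edge between two vertices of $B$ that are reached from $v$ via disjoint red paths of length $2$. Using Lemma~\ref{lem:expander-mixing} on $F$ one can show that $|B|$ is close to $n$ and that the number of red paths of length $2$ from $v$ is $\Omega((d_R(v))^2)$; the codegree bound $|N_F(x)\cap N_F(y)|\le d^2/n+O(\lambda)=O(n^{1/3})$ then forbids the dense bipartite substructure that a $C_5$-free configuration would require, and a convexity/double-counting argument over the red edges between $B$ and itself should produce the desired $C_5$.

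The main obstacle is this quantitative step: dense triangle-free graphs can be $C_5$-free (e.g.\ complete bipartite graphs), so one really needs to leverage that $R\subseteq F$ and exploit the pseudorandom codegree bound to rule out the bipartite-like $C_5$-free configurations, and to do so with only $\mathrm{poly}(k)$ slack. The iterative approach of Lemma~\ref{lem:gadget odd} loses a factor of $20k$ per color, yielding $e^{O(k\log k)}$; the target $e^{O(k)}$ demands a single extremal statement rather than a $k$-fold induction. A possible alternative, if forcing a $C_5$ turns out to be too stringent, is to instead allow a longer monochromatic odd cycle in $F$ and use the pseudorandomness together with the codegree bound to deduce that in such a cycle no chord can exist in $F$ (so that the cycle is induced in $F$); this would require a variant of Alon's construction with slightly larger odd girth but still $2^{O(k)}$ edges, which could be obtained by taking an appropriate vertex-subsampled subgraph.
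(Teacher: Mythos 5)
This statement is an open conjecture in the paper: the authors explicitly state ``we conjecture that such a gadget graph exists'' and offer no proof, so there is no argument of theirs to compare against. Your reduction from Conjecture~\ref{conj:induced-odd-cycle} to the existence of an $e^{O(k)}$-edge gadget graph, and the observation that it plugs into the host-graph machinery, correctly mirror the paper's discussion.

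The proposal, however, has a gap that is structural rather than merely quantitative. The key lemma you aim to prove---that every triangle-free subgraph $R$ of Alon's graph $F$ with $\Omega(n^{5/3}/k)$ edges contains a $C_5$---is false. Partition $V(F)$ into two equal halves $X$ and $Y$ and colour all edges between $X$ and $Y$ red. The red class $R=F[X,Y]$ is a bipartite subgraph of $F$, hence triangle-free and $C_5$-free (indeed, odd-cycle-free), and by Lemma~\ref{lem:expander-mixing} it has $d|X||Y|/n - \lambda\sqrt{|X||Y|} = \Theta(n^{5/3})$ edges, far above your threshold. Since the counterexample is itself a subgraph of $F$, no codegree or pseudorandomness bound on $F$ can rescue the one-shot statement; the same construction also defeats your fallback of asking for a longer monochromatic odd cycle. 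This is exactly the obstruction the iterative argument in Lemma~\ref{lem:gadget odd} is built around: when the second neighbourhood $B$ of a high-degree vertex in the densest colour turns out to be independent in that colour (the bipartite-like case), the proof does not conclude but instead recurses on $F[B]$ with one colour fewer, and the multiplicative $20k$ loss per recursion is precisely what produces $e^{O(k\log k)}$ rather than $e^{O(k)}$.

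To establish the conjecture you would therefore need a genuinely new mechanism for dealing with (near-)bipartite colour classes, rather than a single extremal density statement on $F$. One natural target, suggested implicitly by Lemma~\ref{lem:any-odd-cycle}, would be to show that the recursion can be made to lose only a constant factor per colour (giving $c^k$ for some constant $c$), or to replace the recursion entirely by an argument that aggregates over all $k$ colour classes simultaneously and uses that not all of them can be bipartite on compatible bipartitions. As written, your proposal does not supply such a mechanism and the central lemma it relies on is incorrect.
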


Determining whether the multicolor Ramsey number of a fixed odd cycle, say $C_3$, is exponential in the number of colors is a notoriously hard open problem. However, Conjecture~\ref{conj:induced-odd-cycle} could be substantially easier since it requires us to find a monochromatic cycle of any odd length and can be viewed as an induced analogue of the simple Lemma~\ref{lem:any-odd-cycle}.

\bibliographystyle{plain}

\begin{thebibliography}{10}

\small

\bibitem{alon1994explicit}
Noga Alon.
\newblock Explicit {R}amsey graphs and orthonormal labelings.
\newblock {\em The Electronic Journal of Combinatorics}, pages R12--R12, 1994.

\bibitem{beck1983size}
J{\'o}zsef Beck.
\newblock On size {R}amsey number of paths, trees, and circuits. I.
\newblock {\em Journal of Graph Theory}, 7(1):115--129, 1983.

\bibitem{Berger21}
S\"{o}ren Berger, Yoshiharu Kohayakawa, Giulia~Satiko Maesaka, Ta\'{i}sa Martins,
  Walner Mendon\c{c}a, Guilherme~Oliveira Mota, and Olaf Parczyk.
\newblock The size-{R}amsey number of powers of bounded degree trees.
\newblock {\em Journal of the London Mathematical Society}, 103(4):1314--1332,
  2021.

\bibitem{bondy1974cycles}
John~A. Bondy and Mikl{\'o}s Simonovits.
\newblock Cycles of even length in graphs.
\newblock {\em Journal of Combinatorial Theory, Series B}, 16(2):97--105, 1974.

\bibitem{CFS}
David Conlon, Jacob Fox and Benny Sudakov, Recent developments in graph Ramsey theory. In: {\it Surveys in Combinatorics} 2015, Cambridge University Press, 2015, 49--118.

\bibitem{Deuber}
Walter Deuber, A generalization of Ramsey's theorem. In: {\it Infinite
and Finite Sets}, Vol. 1, Colloquia Mathematica Societatis J\'anos
Bolyai, Vol. 10, North-Holland, Amsterdam/London, 1975, 323--332.

\bibitem{draganic2022short}
Nemanja Dragani{\'c}, Stefan Glock, and Michael Krivelevich.
\newblock Short proofs for long induced paths.
\newblock {\em Combinatorics, Probability and Computing}, 31(5):870--878, 2022.

\bibitem{draganic2022rolling}
Nemanja Dragani{\'c}, Michael Krivelevich, and Rajko Nenadov.
\newblock Rolling backwards can move you forward: on embedding problems in
  sparse expanders.
\newblock {\em Transactions of the American Mathematical Society}, 2022.

\bibitem{draganic2022size}
Nemanja Dragani{\'c} and Kalina Petrova.
\newblock Size-{R}amsey numbers of graphs with maximum degree three.
\newblock {\em arXiv preprint arXiv:2207.05048}, 2022.

\bibitem{dudek2015alternative}
Andrzej Dudek and Pawe{\l} Pra{\l}at.
\newblock An alternative proof of the linearity of the size-{R}amsey number of
  paths.
\newblock {\em Combinatorics, Probability and Computing}, 24(3):551--555, 2015.

\bibitem{dudek2017some}
Andrzej Dudek and Pawe{\l} Pra{\l}at.
\newblock On some multicolor {R}amsey properties of random graphs.
\newblock {\em SIAM Journal on Discrete Mathematics}, 31(3):2079--2092, 2017.

\bibitem{erdos1975problems}
Paul Erd\H{o}s.
\newblock Problems and results on finite and infinite graphs.
\newblock In {\em Recent Advances in Graph Theory. Proceedings of the Second
  Czechoslovak Symposium}, pages 183--192. Academia, Prague, 1975.
  
\bibitem{erdHos1978size}
Paul Erd\H{o}s, Ralph~J. Faudree, Cecil~C. Rousseau, and Richard~H. Schelp.
\newblock The size {R}amsey number.
\newblock {\em Periodica Mathematica Hungarica}, 9(1-2):145--161, 1978.

\bibitem{ErHaPo}
Paul Erd\H{o}s, Andr\'{a}s Hajnal, and Lajos P\'osa. Strong embeddings of graphs into colored graphs. In: {\it Infinite and Finite Sets}, Vol. 1,
Colloquia Mathematica Societatis J\'anos Bolyai, Vol. 10,
North-Holland, Amsterdam/London, 1975, 585--595.

\bibitem{erdos-renyi}
Paul Erd\H{o}s and Alfr\'{e}d R\'{e}nyi.
\newblock On a problem in the theory of graphs.
\newblock {\em A Magyar Tudományos Akadémia. Matematikai Kutató
  Intézetének Közleményei}, 7:623--641, 1962.

\bibitem{fox2008induced}
Jacob Fox and Benny Sudakov.
\newblock Induced {R}amsey-type theorems.
\newblock {\em Advances in Mathematics}, 219(6):1771--1800, 2008.

\bibitem{friedman1987expanding}
Joel Friedman and Nicholas Pippenger.
\newblock Expanding graphs contain all small trees.
\newblock {\em Combinatorica}, 7(1):71--76, 1987.

\bibitem{haxell1995induced}
Penny~E. Haxell, Yoshiharu Kohayakawa, and Tomasz {\L}uczak.
\newblock The induced size-{R}amsey number of cycles.
\newblock {\em Combinatorics, Probability and Computing}, 4(3):217--239, 1995.

\bibitem{JLR}
Svante Janson, Tomasz {\L}uczak and Andrzej  Ruci\'nski,
{\it Random graphs}, John Wiley \& Sons, 2011.

\bibitem{javadi2019size}
Ramin Javadi, Farideh Khoeini, Gholam~Reza Omidi, and Alexey Pokrovskiy.
\newblock On the size-{R}amsey number of cycles.
\newblock {\em Combinatorics, Probability and Computing}, 28(6):871--880, 2019.

\bibitem{javadi2023multicolor}
Ramin Javadi and Meysam Miralaei.
\newblock The multicolor size-{R}amsey numbers of cycles.
\newblock {\em Journal of Combinatorial Theory, Series B}, 158:264--285, 2023.

\bibitem{kamcev2021size}
Nina Kam\v{c}ev, Anita Liebenau, David~R. Wood, and Liana Yepremyan.
\newblock The size {R}amsey number of graphs with bounded treewidth.
\newblock {\em SIAM Journal on Discrete Mathematics}, 35(1):281--293, 2021.

\bibitem{kohayakawa1998induced}
Yoshiharu Kohayakawa, Hans~J\"{u}rgen Promel, and Vojt\v{e}ch R\"{o}dl.
\newblock Induced {R}amsey numbers.
\newblock {\em Combinatorica}, 18(3):373--404, 1998.

\bibitem{kohayakawa2011sparse}
Yoshiharu Kohayakawa, Vojt{\v{e}}ch R{\"o}dl, Mathias Schacht, and Endre
  Szemer{\'e}di.
\newblock Sparse partition universal graphs for graphs of bounded degree.
\newblock {\em Advances in Mathematics}, 226(6):5041--5065, 2011.

\bibitem{krivelevich2019expanders}
Michael Krivelevich.
\newblock Expanders---how to find them, and what to find in them.
\newblock In: {\em Surveys in combinatorics 2019}, volume 456 of {\em London
  Mathematical Society Lecture Note Series}, pages 115--142. Cambridge
  University Press, Cambridge, 2019.

\bibitem{krivelevich2019long}
Michael Krivelevich.
\newblock Long cycles in locally expanding graphs, with applications.
\newblock {\em Combinatorica}, 39(1):135--151, 2019.

\bibitem{krivelevich2006pseudo}
Michael Krivelevich and Benny Sudakov.
\newblock Pseudo-random graphs.
\newblock In: {\em More sets, graphs and numbers}, pages 199--262. Springer,
  2006.

\bibitem{Ramsey1930}
Frank~P. Ramsey.
\newblock On a problem of formal logic.
\newblock {\em Proceedings of the London Mathematical Society}, 30(4):264--286,
  1929.

\bibitem{Rodl73}
Vojt\v{e}ch R\"{o}dl.
\newblock The dimension of a graph and generalized Ramsey theorems, \newblock Master's thesis, Charles University, 1973.

\bibitem{rodl2000size}
Vojt\v{e}ch R\"{o}dl and Endre Szemer{\'e}di.
\newblock On size {R}amsey numbers of graphs with bounded degree.
\newblock {\em Combinatorica}, 20(2):257--262, 2000.

\bibitem{tikhomirov2022bounded}
Konstantin Tikhomirov.
\newblock On bounded degree graphs with large size-{R}amsey numbers.
\newblock {\em arXiv preprint arXiv:2210.05818}, 2022.

\end{thebibliography}

\section{Appendix}
\thmmichaelrestate*
\begin{proof}
    Set $\delta = \frac{c_1 - c_2}{2 \lceil \log_2 \frac{1}{\beta} \rceil}$ and $d_i = c_1 - i\delta,$ for $i \ge 0.$ The proof proceeds in several iterations. We start with $i = 0,$ $G_0 = G.$ Suppose we are at iteration $i \ge 0$ and that the graph $G_i$ satisfies $|E(G_i)| / |V(G_i)| \ge d_i,$ which holds for $i = 0$ by assumption. Let $H_i = (U_i, F_i)$ be an inclusion-wise minimal induced subgraph of $G_i$ for which $|F_i| / |U_i| \ge d_i$ (which exists since $G_i$ satisfies this condition). Note that every subset $W \subseteq U_i$ touches at least $d_i |W|$ edges as otherwise we could remove $W$ from $U_i$ to obtain a subgraph with average degree at least $2d_i$  contradicting the minimality of $H_i.$ If
    \begin{equation} \label{eq:W}
        \text{there exists } W \subseteq U_i, \beta n \le |W| \le |U_i| / 2, \text{ s.t. } W \text{ spans at least } d_{i+1}|W| \text{ edges in } H_i,
    \end{equation}
    then we update $G_{i+1} = G[W], i \coloneqq i+1,$ otherwise we halt the process.
    Observe that if we proceed to the next iteration as described above, then $|V(G_{i+1})| \le |V(G_i)| / 2$ and $|E(G_{i+1})| / |V(G_{i+1})| \ge d_{i+1}.$ Hence, if we reach iteration $i = \lceil \log_2(1/\beta) \rceil,$ we arrive at a subgraph $G_i$ of size at most $\beta n$ satisfying $|E(G_i)| / |V(G_i)| \ge d_i = \frac{c_1 + c_2}{2},$ contradicting the second assumption. Therefore, the process stops at some $i \le \lceil \log_2(1/\beta) \rceil - 1,$ implying that \eqref{eq:W} is not satisfied for $H_i = (U_i, F_i).$ Since $|E(H_i)| / |V(H_i)| \ge d_i \ge (c_1 + c_2) / 2,$ we have that $|V(H_i)| \ge \beta n$ by the second assumption. 
    
    Finally, we verify that $H_i$ is a $\gamma$-expander so we can take $G^* = H_i.$ Indeed, let $W \subseteq U_i, \, \beta n \le |W| \le |U_i| / 2.$ Recall that $H_i$ does not satisfy \eqref{eq:W}, so $W$ spans at most $d_{i+1}|W|$ edges, yet by our choice of $H_i,$ $W$ touches at least $d_i |W|$ edges in $H_i.$ Thus, $(W, U_i \setminus W) \ge (d_i - d_{i+1}) |W| = \delta |W|.$ Now, suppose $|W| \le \beta n.$ By our second assumption, $|W|$ spans at most $c_2 |W|$ edges in $H_i,$ but touches at least $d_i |W| \ge \left(\frac{c_1 + c_2}{2} + \delta\right) \ge (c_2 + \delta) |W|$ edges in $H_i.$ Again, we conclude that $e(W, U_i \setminus W) \ge \delta |W|.$ Recalling the maximum degree condition, we see that $|N_{H_i}(W)| \ge e(W, U_i \setminus W) / \Delta \ge \frac{\delta}{\Delta}|W| = \gamma |W|,$ implying that $H_i$ is a $\gamma$-expander, as required.
\end{proof}

\end{document}